\documentclass[11pt]{article}

\usepackage{bbm}
\usepackage{amsthm}
\usepackage{amssymb}
\usepackage{amsmath}
\usepackage{color}
\usepackage{graphics}
\usepackage{graphicx}
\usepackage{hyperref}
\usepackage{caption}
\usepackage{tikz}
\usepackage{subcaption}
\usepackage{pgfplots}
\pgfplotsset{compat=newest}
\usepgfplotslibrary{fillbetween}
\usetikzlibrary{automata,arrows,positioning,calc}

\DeclareMathOperator*{\argmax}{arg\,max}

\newtheorem{theorem}{Theorem}
\newtheorem{assumption}{Assumption}
\newtheorem{proposition}{Proposition}
\newtheorem{lemma}{Lemma} 

\newtheorem{definition}{Definition}
\theoremstyle{remark}
\newtheorem{remark}{Remark}

\newcommand{\brac}[1]{\left( #1\right)}

\newcommand{\abs}[1]{\left| #1\right|}
\newcommand{\floor}[1]{\left\lfloor #1\right\rfloor}
\newcommand{\expect}[2][]{\mathbb{E}^{#1}\left[ {#2}\right]}
\newcommand{\prob}[1]{\mathbb{P}\left( #1\right)}

\newcommand{\norm}[1]{\left\lVert{#1}\right\rVert}
\newcommand{\indicator}[1]{ \mathbbm{1}_{\{#1\}} }

\newcommand{\sign}{\text{sign}}
\newcommand{\QED}{\null\hfill$\square$}

\newcommand{\floornt}{\lt\lfloor nt\rt\rfloor}
\newcommand{\sfW}{\mathsf{W}}

\newcommand{\sfC}{\mathsf{C}}

\newcommand{\sfY}{\mathsf{Y}}
\newcommand{\sfX}{\mathsf{X}}
\newcommand{\sfZ}{\mathsf{Z}}
\newcommand{\sfR}{\mathsf{R}}
\newcommand{\sfU}{\mathsf{U}}
\newcommand{\sfV}{\mathsf{V}}

\newcommand{\tot}{{\rm tot}}
\newcommand{\id}{{\tt id}}
\newcommand{\ttR}{\texttt{R}}
\newcommand{\ttW}{\text{\rm\texttt{W}}}
\newcommand{\vc}[1]{\mathbf{#1}}

\newcommand{\eps}{\varepsilon}
\newcommand{\ph}{\varphi}

\newcommand{\al}{\alpha}

\newcommand{\sig}{\sigma}
\newcommand{\del}{\delta}
\newcommand{\om}{\omega}
\newcommand{\Gam}{\mathnormal{\Gamma}}
\newcommand{\Del}{\mathnormal{\Delta}}

\newcommand{\PI}{\mathnormal{\Pi}}

\newcommand{\Om}{\mathnormal{\Omega}}

\newcommand{\B}{{\mathbb B}}

\newcommand{\N}{{\mathbb N}}

\newcommand{\R}{{\mathbb R}}

\newcommand{\Z}{{\mathbb Z}}

\newcommand{\EE}{{\mathbb E}}
\newcommand{\E}{{\mathbb E}}

\newcommand{\PP}{{\mathbb P}}

\newcommand{\calA}{{\cal A}}
\newcommand{\calB}{{\cal B}}
\newcommand{\calC}{{\cal C}}
\newcommand{\calD}{{\cal D}}
\newcommand{\calE}{{\cal E}}
\newcommand{\calF}{{\cal F}}
\newcommand{\calG}{{\cal G}}
\newcommand{\calH}{{\cal H}}

\newcommand{\calK}{{\cal K}}

\newcommand{\calP}{{\cal P}}

\newcommand{\calR}{{\cal R}}
\newcommand{\calS}{{\cal S}}

\newcommand{\bxi}{{\boldsymbol \xi}}
\newcommand{\bb}{{\boldsymbol b}}
\newcommand{\bq}{{\boldsymbol q}}
\newcommand{\bsigma}{{\boldsymbol \sigma}}

\newcommand{\frS}{\mathfrak{S}}

\newcommand{\uu}{\underline}

\newcommand{\skp}{\vspace{\baselineskip}}

\newcommand{\w}{\wedge}
\newcommand{\lt}{\left}
\newcommand{\rt}{\right}
\newcommand{\pl}{\partial}

\newcommand{\To}{\Rightarrow}

\newcommand{\iy}{\infty}
\newcommand{\noi}{\noindent}

\topmargin -.5in
\textheight 9in
\oddsidemargin -.25in
\evensidemargin -.25in
\textwidth 7in

\begin{document}

\title{
Scheduling in the high uncertainty heavy traffic regime}
\author{Rami Atar\thanks{Viterbi Faculty of Electrical and Computer Engineering, Technion, Haifa, Israel}
\and
Eyal Castiel${}^*$
\and Yonatan Shadmi${}^*$
}

\maketitle

\begin{abstract}
We propose a model uncertainty approach to
heavy traffic asymptotics that allows for a high level of uncertainty.
That is, the uncertainty classes
of underlying distributions accommodate disturbances
that are of order 1 at the usual diffusion scale,
as opposed to asymptotically vanishing disturbances studied previously
in relation to heavy traffic.
A main advantage of the approach is that
the invariance principle underlying diffusion limits
makes it possible to define uncertainty classes in terms of the first two moments only.
The model we consider is a single server queue with multiple job types.
The problem is formulated as a zero sum stochastic game played
between the system controller, who determines scheduling
and attempts to minimize an expected linear holding cost,
and an adversary, who dynamically controls the service time distributions
of arriving jobs, and attempts to maximize the cost.
The heavy traffic asymptotics of the game are fully solved.
It is shown that an asymptotically optimal policy for
the system controller is to prioritize according to
an index rule and for the adversary it is to select distributions
based on the system's current workload.
The workload-to-distribution feedback mapping
is determined by an HJB equation,
which also characterizes the game's limit value.
Unlike in the vast majority of results in the heavy traffic theory,
and as a direct consequence of the diffusive size disturbances,
the limiting dynamics under asymptotically optimal play
are captured by a stochastic differential equation
where both the drift and the diffusion coefficients may be
discontinuous.

\skp

\noi{\bf AMS subject classification:}
60K25; 93E20; 91A15; 60F17; 91A05; 68M20

\skp

\noi{\bf keywords:}
heavy traffic;
model uncertainty;
high uncertainty regime;
stochastic game;
HJB equation;
drift-variance tradeoff;
diffusion with discontinuous coefficients
\end{abstract}

\section{Introduction}
\label{sec1}

\subsection{Background, motivation and setting}

The asymptotic analysis of queueing systems under heavy traffic
and their non-asymptotic analysis under model uncertainty have
both been subject to extensive research.
There are very few papers on settings that combine the two;
those that we are aware of are \cite{coh19,coh19b,coh-sah} and \cite{sun2021}.
In this body of work, the effective size of uncertainty classes diminishes
as the heavy traffic scaling parameter grows.
The goal of this paper is to propose an approach
that combines heavy traffic and model uncertainty
in a way that keeps the disturbances associated with uncertainty
at order one under the usual diffusion scale.
As is well known, owing to the invariance principle governing diffusion limits,
results in the heavy traffic regime enjoy the robustness
property that the approximations they provide are based on only
the first two moments of the underlying distributions.
We shall argue that in the setting proposed here
the same reasons make it possible to
transform uncertainty classes in the space of distributions
to classes defined by the first two moments,
a desired property from model uncertainty viewpoint.

Although the approach is potentially applicable for a large variety of models,
we focus here on one model where a single server
caters to a number of streams of jobs of different types,
and a system controller (SC)
dynamically allocates the server's effort to the different streams.
The system is subject to model uncertainty
with respect to the service time distributions.
The model is often referred to as a {\it multiclass} queue,
but in this paper we reserve the term {\it class} to the context of uncertainty
and instead use the term {\it type} to distinguish between streams of jobs,
hence use {\it multitype} as substitute for {\it multiclass}.
To capture uncertainty, a stochastic zero sum game is formulated in which
an adversary can dynamically select the distributions
of arriving job sizes from given uncertainty classes, one per job type.
The cost consists of an expected discounted linear combination of queue lengths.
The asymptotics of the game are fully solved.
It is shown that it is asymptotically optimal (AO) for the SC
to prioritize according to the $c\mu$ rule regardless of the adversary behavior.
AO play for the adversary is derived
in terms of an underlying ordinary differential equation
of Hamilton-Jacobi-Bellman (HJB) type, which, moreover, characterizes the
value function asymptotics.

The importance of accounting for uncertainty in queueing models
is widely acknowledged. In what follows we mention a small sample
of work in this area.
An adversarial approach
to questions regarding stability of queueing networks
is proposed in \cite{bor96}. It is developed in \cite{gam00}
by exploiting the relation between queueing models and their
corresponding fluid models.
In \cite{ban15}, a worst case approach based on techniques of robust optimization
is proposed and used to obtain performance bounds on several queueing
models. In \cite{jai10}, a queueing control problem is solved
in which model uncertainty is characterized by relative entropy.
The modeling of call centers has motivated much work on model uncertainty,
including \cite{bas05, che-has, har-zee, koc15, whi06}.
Among these, \cite{bas05, har-zee, whi06} model uncertainty via stochastic
fluid models, where in particular, the fluid scale asymptotics are rigorously
justified in \cite{bas05}.
A staffing problem is solved in
\cite{che-has} in the Halfin-Whitt heavy traffic regime,
where a worst case approach is taken (among others)
to uncertainty in the arrival process.
Another control problem for a call center model is analyzed in
\cite{koc15}, where uncertainty is modeled by stochastic arrival rates
and diffusion-scale asymptotic optimality of a proposed control policy
is proved.
In \cite{ata02, dup03}, robust control problems associated with fluid queueing
models are considered via differential games.
A robust approach to queueing models at the large deviations regime
is proposed in \cite{ata21}.

The work that is most relevant to ours
is the aforementioned series \cite{coh19,coh19b,coh-sah}.
It combines an adversarial and an asymptotic approach in the heavy traffic regime,
and studies a queueing system similar to the one studied here.
In \cite{coh19,coh19b} buffers are finite,
cost is linear in queue length and rejection count,
whereas in \cite{coh-sah} buffers are infinite and cost is strictly convex in queue length.
The uncertainty class of arrival and service time distributions is based on
a reference model corresponding to the multitype M/M/1 queue,
in which arrival and potential service processes are Poisson with given intensities.
The adversary is allowed to perturb the law of these processes,
but is subject to a penalty expressed in terms of a functional which is a
generalization of the Kullback-Leibler
divergence between the perturbed and reference measures.
The results of \cite{coh19,coh19b} characterize
the limiting value function in terms of an HJB equation and construct
AO strategies, whereas the paper \cite{coh-sah} proves that the generalized $c\mu$ rule is an AO
strategy for the SC and provides an explicit solution
to a differential game which governs the asymptotics of the problem.
Similar setting and methodology appear also in the preprint \cite{sun2021}.

Our treatment differs from that in \cite{coh19,coh19b,coh-sah}
in a number of ways, two of which are crucial:
The collection of distributions allowed in the uncertainty class,
and the asymptotic size of this class.
To explain the first aspect, note that because in \cite{coh19,coh19b,coh-sah}
model uncertainty is quantified in terms of penalty for deviations measured
by divergence, the uncertainty classes of the underlying counting processes
are automatically restricted to absolutely continuous
changes of measure with respect to Poisson processes.
However, standard (non-game)
heavy traffic limit theorems are typically concerned
with much larger classes of distributions
within the domain of applicability of CLT approximations,
namely those restricted only by moment assumptions.
We aim here at uncertainty classes defined only in terms of moments,
and not restricted to absolutely continuous changes of measures of one another.

The second crucial aspect has to do with the scale of the uncertainty classes.
In the setting of \cite{coh19,coh19b,coh-sah},
the effective size of the uncertainty classes shrinks like $n^{-1/2}$ as $n$ grows,
where we have denoted by $n$ the usual heavy traffic scaling parameter.
More precisely, it is shown in these references that
there is no loss for the adversary to consider only changes of measures
for which the stochastic intensities of the underlying counting processes
are $(1+O(n^{-1/2}))$-multiple of the reference model's Poissonian intensity
(see Proposition 4.4 of \cite{coh19} and Proposition 4.3 of \cite{coh-sah}).
For example, if the potential service process of type 1 jobs is Poisson of rate $\mu_1$
under the reference measure then within the uncertainty class
it has stochastic intensity bounded
above and below by $\mu_1(1+O(n^{-1/2}))$, and consequently
the perturbed service time distributions are bounded between
${\rm Exp}(\mu_1+cn^{-1/2})$ and ${\rm Exp}(\mu_1-cn^{-1/2})$
in the sense of usual stochastic ordering
(here, ${\rm Exp}(\mu)$ denotes the exponential distribution with parameter
$\mu$).
It is proposed to refer to this setting as the {\it low uncertainty regime}.
Our aim is to address a different natural setting in which
the perturbations are of order $1$, in a sense to be made precise,
to which we shall refer as the {\it high uncertainty regime}.
One should not regard one regime superior to the other;
rather, the two complement each other,
each capturing realistic modeling assumptions under different scenarios.

We now give some more details on the setting in which we work.
The model can be regarded as a multitype D/G/1 queue
with uncertainty.
The focus is on stochasticity associated only with sizes of jobs,
not with their arrival times, that are deterministic by assumption.
Although it is desired to model arrival stochasticity,
avoiding it saves much notational burden, whereas
mathematically this aspect is less difficult.
However, because jobs of size zero are allowed to occur with
positive probability, the setting automatically captures
exponentially distributed interarrival times.

To fix some notation, consider for the moment
the much simpler, single D/G/1 queue with fixed underlying
law (i.e., no uncertainty), described as follows.
For each $n\in\N$, let $\{J^n_k\}_{k\in\N}$ be IID nonnegative RVs.
In the $n$-th system, an arrival occurs every unit of time, and the
$k$-th job to arrive is of size $J^n_k$. Denote $b_n=n^{-1/2}(E[J^n_1]-1)$
and $\sig_n^2={\rm var}(J^n_1)$. It is well known that
if $(b_n,\sig_n^2)\to(b,\sig^2)\in\R\times(0,\iy)$ then
under Lindberg-Feller conditions and convergence of initial conditions,
the diffusion scaled workload of the $n$-th system converges, as $n\to\iy$,
to a reflected Brownian motion (BM) with infinitesimal drift and variance
$b$ and $\sig^2$.
For this reason we shall refer to $b_n$ and $\sig_n^2$ as the {\it prelimit
drift and variance}.
Similarly, in a multitype setting, there is a pair of prelimit coefficients
for each type.

We can now explain how the uncertainty classes are defined
in this paper. For each $n$ and type $\ell$, an uncertainty class is a set
in the space of measures on $\R_+$, representing probability laws of
job sizes. For each such measure there corresponds a pair $(b_n,\sig_n^2)$
defined analogously to the above example. The collection of pairs $(b_n,\sig_n^2)$
corresponding to all measures in the uncertainty class is denoted by
$K^{\ell n}\subset \R^2$. It is assumed that, as $n\to\iy$,
$K^{\ell n}\to K^\ell$ in the Hausdorff metric, where
$K^\ell$ is a compact (but otherwise arbitrary) subset of $\R\times(0,\iy)$.
A certain uniform integrability condition is also assumed.

According to this definition,
an uncertainty class may contain distributions for which
the second moment differs by $O(1)$.
The same is true, for example, for the absolute centered first moment.
The only sense in which the distributions within the
class must become close to each other
is in the first moment, where they can differ by only $O(n^{-1/2})$.
This is necessary in order to not trivialize the problem:
if the adversary is allowed to choose between
prelimit drifts $b_n$ that are apart by more than $O(n^{-1/2})$
then only those with greatest $b_n$ will be chosen,
making the problem much simpler.

\subsection{Results}

A first key step in our analysis is to relate the workload process,
whose limit provides the state process of the dynamics, to the
queue length, in terms of which the cost is expressed.
In non-game heavy traffic settings, these processes are asymptotically
proportional to one another, a phenomenon
known as Reiman's snapshot principle (RSP) \cite{rei82}.
In order to analyze the game one needs to develop, as we do,
a version of RSP in which
the asymptotic proportionality is attained uniformly in the actions of the SC.

The $c\mu$ rule is well known to be AO in the context of linear queue length
cost. Our first main result is based on the aforementioned RSP and
states that this rule defines an AO policy for the SC under
arbitrary behavior of the adversary.
Thanks to this result, we can, and do, analyze the game assuming
that the $c\mu$ rule is always used by the SC.
Thus, although the problem we are set to solve is concerned with
game asymptotics, its limit is reduced to a stochastic control problem
that involves only the adversary, which is considerably simpler than
a stochastic differential game.

In this control problem, the state process is a one-dimensional controlled
diffusion with reflection at the origin and controlled drift and diffusion coefficients.
It is well known that solutions to a control problem of this
kind are characterized in terms of viscosity solutions of an HJB equation,
but classical solutions do not always exist.
In our case we show by appealing to PDE theory on fully nonlinear uniformly
elliptic equations that the HJB equation has a unique classical solution.
The solution to the control problem is given by a diffusion
with discontinuous drift and diffusion coefficients.

Our second main result provides a full solution to the game asymptotics.
It states that the stochastic game's value
converges to that of the diffusion control problem
or, equivalently, to the solution of the HJB equation.
It also specifies an AO policy for the adversary.
It is established in its proof that any subsequential limit of the game dynamics
is given by the aforementioned diffusion with discontinuous
coefficients; in this respect
the limit result resembles that in the recent work \cite{ACR-LB, ACR}.

Finally we state a result that provides insight into the
collection of distributions within the uncertainty classes
from which the adversary selects. As we will explain, it is standard to deduce
from the form of the HJB equation that an adversary that
plays optimally will select distributions only from a part of the boundary
of the set, specifically, the set that generates the convex hull of $K$.
Our results reduces further the collection of potentially used
points in the class, to what we call the set of dominating points.
This is the collection of points on the boundary
that are dominating the whole set
w.r.t.\ the partial order $(b,\sig)\le(b',\sig')$ iff
$b\le b'$ and $\sig\le\sig'$.

\subsection{Organization of the paper}

\S \ref{sec2} describes the queueing model, the game and the form of uncertainty classes,
and introduces a martingale control problem (MCP) which is later proved to govern the game asymptotics.
It then states Theorem \ref{thm:cmu is opt} on the AO of the $c\mu$-rule,
Proposition \ref{pr:verification}, which relates the MCP value to an HJB equation,
and Theorem \ref{thm:Vn to V}, which establishes a relation between the game asymptotics,
the MCP and the HJB equation. \S \ref{sec3} contains remarks and examples.
The remaining sections contain proofs. \S \ref{sec:cmu} provides the proof of
Theorem \ref{thm:cmu is opt}. \S \ref{sec:mcp} studies the MCP and the HJB equation
and proves Proposition \ref{pr:verification}. Finally, Theorem \ref{thm:Vn to V} is proved
in \S\S\ref{sec:ub}--\ref{sec:ao}, where the former establishes a general upper bound
on the game asymptotics in terms of the MCP value, and the latter identifies
a sequence of controls for the adversary that asymptotically achieves this bound,
thereby establishing the convergence of the game's value.

\subsection{Notation}

For $k\in\N$, $[k]=\{1,\ldots,k\}$. $\R_+=[0,\iy)$. 
For $u,v\in\R^k$, $u\cdot v=\sum_i u_i v_i$.
For $a,b\in\R$, $a\le b$, $\sum_{i=a}^b=\sum_{i\in\Z\cap[a,b]}$.
For $a, b \in \R$, $a \vee b$ and $a\w b$ denote the maximum
and, respectively, minimum of $a$ and $b$, and $a^+=a \vee 0$.
For a metric space $(M,d)$,
the Hausdorff metric between two nonempty sets $U,V\subset M$
is defined by
\[
d_{\rm H}(U,V)= \sup\{d(x,V): x\in U\}\vee\sup\{d(y,U): y\in V\},
\]
where $d(x,U)$ denotes the distance between a point $x$ and a set $U$.
For a subset $K\subset\R^k$, $\text{ch}(K)$ denotes its closed convex hull.
The symbol $\id:\R_+\to\R_+$ denotes the identity map.
For $f:\R_+\to\R$ and $t,\del>0$, denote
$\|f\|_t=\sup_{s\in[0,t]}|f(s)|$ and
\[
w_t(f,\del)=\sup\{|f(s_1)-f(s_2)|:0\le s_1\le s_2\le(s_1+\del)\w t\}.
\]
For $0\le s\le t$, the notation $f[s,t]$ stands for $f(t)-f(s)$
and $\Del f_t=f(t)-f(t-)$ (when the left-limit exists).
For real-valued functions and processes, the notation $X(t)$ is
used interchangeably with $X_t$.
Given a Polish space $E$, denote by $\calC_E[0,\iy)$ and $\calD_E[0,\infty)$
the spaces of $E$-valued, continuous and, respectively, c\`{a}dl\`{a}g functions on $[0,\infty)$.
In the case $E=\R$ simply denote $\calC$ and $\calD$ respectively.
Equip the former with the topology of convergence u.o.c.\ and the latter with the $J_1$ topology.
Denote by $\calC^+_{\R^k}[0,\iy)$ (respectively, $\calD^+_{\R^k}[0,\infty)$)
the subset of $\calC_{\R^k}[0,\iy)$
(respectively, $\calD_{\R^k}[0,\iy)$) of componentwise
non-negative and non-decreasing functions.
Write $X_n\To X$ for convergence in law.
A tight sequence of processes with sample paths in $\calD_E[0,\iy)$ is said to be $\calC$-tight
if it is tight and the limit of every
weakly convergent subsequence has sample paths in $\calC_E[0,\iy)$ a.s.
The letter $c$ denotes a deterministic constant whose value may change from one
appearance to another.

\section{Model and results}
\label{sec2}

\subsection{Queueing model and game setting}\label{sec21}

\subsubsection{Queueing model}

The queueing model is that of a multitype D/G/1 queue with $L$ types of jobs,
operating in continuous time, where each type has a dedicated unlimited buffer, and arrivals
into each of these buffers occur at deterministic times.
In the $n$-th system an arrival occurs every $n^{-1}$ units of time
(starting at time $n^{-1}$), and the service rate is $n$.
All random variables (RVs) and processes introduced below
are defined on a probability space $(\Om,\calF,\PP)$.
For $k\in\N$, at each time $kn^{-1}$, a type-$\ell$ job
of size $J_k^{\ell n}$ arrives  at the $\ell$-th buffer of the $n$-th system.
As a result, for each $\ell$,
the number of type-$\ell$ arrivals by time $t$, and the work associated with these arrivals
are given, respectively, by
\begin{align}\label{eq:Al}
\lfloor nt\rfloor,
\qquad
A^{\ell n}_t=\sum_{k=1}^{nt}J_k^{\ell n},
\qquad t\in\R_+.
\end{align}
We note that we do not regard \eqref{eq:Al} as rescaling of time by factor $n$, rather $nt$ is the number of jobs arriving by time $t$.
Henceforth, a superscript `$\tot$' (for `total')
will denote summation over $\ell\in[L]$,
e.g.\ $A^{\tot,n}_t=\sum_{\ell=1}^LA_t^{\ell n}$, etc.

Next we introduce the control process for the SC, denoted by
$B^n_t=(B^{\ell n}_t)_{\ell\in[L]}$. In the special case where the server's behavior
is to devote all its effort to one type at a time,
$B^{\ell n}_t$ is simply the time the server has devoted to type-$\ell$ jobs
by time $t$. In general, resource sharing is allowed and therefore
$B^{\ell n}_t$ represents the cumulative effort provided
to $\ell$-type jobs by time $t$. Because the server works at rate $n$,
the work done by time $t$ is given by $nB^{\ell n}_t$, for each type $\ell$.
Denote the simplex in
$\R^L$ by ${\rm Sim}=\{x\in[0,1]^L: \sum_\ell x_\ell\le 1\}$.
Then the sample paths of $B^n$ take values in
\[
\uu{\calB}:={\rm Lip}({\rm Sim}):=
\{\psi\in \calC^+_{\R^L}[0,\iy):\psi(0)=0,\, (t-s)^{-1}(\psi(t)-\psi(s))\in{\rm Sim}
\text{ for all } 0\le s<t\}.
\]

It is assumed that within each type, jobs are served according to FIFO.
It is also assumed that the system starts empty.
Let $W^n_t=(W^{\ell n}_t)_{\ell\in[L]}$ denote the
workload at the different buffers at time $t$. Then
this process is uniquely determined by the workload arrival process
$A^n_t$ and the cumulative effort process $B^n_t$ via
\begin{equation}\label{eq:Wl}
    W^{\ell n}_t=A^{\ell n}_t-nB^{\ell n}_t\ge0,
    \qquad \ell\in[L],\ t\ge0.
\end{equation}
Above, we have also expressed the nonnegativity of the workload,
which we shall regard as a constraint imposed
on the SC when selecting the process $B^n$.
The term $nB^{\ell n}_t$ reflects the fact that
the service rate is given by $n$.

Next, the queue length $Q^{\ell,n}_t$ can also be determined
from $A^n$ and $B^n$. To express it, let
\begin{align*}
    &S^{\ell n}_k=\inf\{t\geq 0:nB^{\ell n}_{t}\ge A^{\ell n}_{k/n}\},\\
    &D^{\ell n}_t=\sup\{k\geq 0:S^{\ell n}_k\le t\}.
\end{align*}
Then $S^{\ell n}_k$ represents the time the $k$-th $\ell$-type job
has left the system, and $D^{\ell n}_t$, the departure process,
represents the number of $\ell$-type jobs that have left the system
by time $t$.
Then we have
\begin{align}\label{eq:Ql}
Q^{\ell n}_t=\lfloor nt \rfloor-D^{\ell n}_t.
\end{align}
Note that by their construction, the processes
$A^n$, $W^n$, $Q^n$ have sample paths in $\calD_{\R_+^L}[0,\iy)$.

The diffusion scale versions of $W^n$,  $Q^n$ and $R^n$ are given by
$\hat W^n=n^{-1/2}W^n$, $\hat Q^n=n^{-1/2}Q^n$ and $\hat R^n=n^{-1/2}R^n$.
Several additional diffusion scaled processes are defined later along with
conditions for heavy traffic.

\subsubsection{Game formulation}

We now describe a game, defined for each $n$,
played between the SC that controls $B^n$
and an adversary that controls the laws from which $J^n$ are drawn.
To introduce the cost, let a vector $h\in(0,\iy)^L$ be given, and let
\begin{align}\label{27}
    C^n(B^n,J^n)=\E\Big[\int_0^\infty e^{-t}h\cdot \hat{Q}^n_tdt\Big],
    \qquad n\in\N.
\end{align}
Above, $\hat Q^n$ is the process determined by $B^n$ and $J^n$.
The SC's goal is to minimize this cost, whereas the adversary, whose goal is to
disturb this effort as much as possible, attempts to maximize it.
The precise details are as follows.

\paragraph{Adversary controls.}

Let $\calP$ denote the space of probability measures on $\R_+$
endowed with the topology of weak convergence.
For each $n$ and $\ell$, a nonempty closed set $\PI^{\ell n}\subset\calP$ is given,
playing the role of an uncertainty class for type-$\ell$ job size distributions.
The product $\PI^{1n}\times\cdots\times\PI^{L n}$ is denoted by $\PI^n$.
At each arrival time, $n^{-1}k$, the adversary selects the random vector
of job sizes, $J^n_k=(J^{\ell n}_k)_{\ell\in[L]}$, in two steps.
First, it selects for each type $\ell$ a distribution from the corresponding
uncertainty class, namely
$\pi^{\ell n}_k\in\PI^{\ell n}$. Before doing so it may observe all
past job sizes $J^n_1,\ldots,J^n_{k-1}$. This is expressed by requiring
\begin{equation}\label{20}
\pi^n_k=(\pi^{\ell n}_k)_{\ell\in[L]}\in\calF^{n}_{k-1}
:=\sig\{J^n_m:m\le k-1\}.
\end{equation}
We also denote
\begin{equation}\label{30}
\calG^n_t=\calF^n_{\floor{nt}},\qquad t\ge0,
\end{equation}
and note that $\calF^n_k=\calG^n_{k/n}$. 

In the second step, the adversary selects the random vector $J^n_k$
so that it follows the product law corresponding to these newly selected distributions.
More precisely, for $C_\ell\in\calR_+$ (Borel subsets of $\R_+$), $\ell\in[L]$,
\begin{equation}\label{22}
\PP(\cap_\ell \{J^{\ell n}_k\in C_\ell\}|\calF^{n}_{k-1})
=\prod_\ell\pi^{\ell n}_k(C_k).
\end{equation}
(We assume without loss of generality that the probability space supports such RVs).

An admissible control for the adversary is thus a sequence
$(a^n_k)_{k\in\N}=(\pi^n_k,J^n_k)_{k\in\N}$ of $\PI^n\times\R_+^L$-valued RVs,
satisfying \eqref{20} and \eqref{22}. The collection of all admissible
controls for the adversary is denoted by $\calA^n$.

\begin{remark}
An alternative way to define the second step is to set $\calA^n=\PI^n$ and let
$J^n_k$ be selected arbitrarily according to \eqref{22} rather than
letting the adversary select it. In this case one must show that the way in
which it is selected always leads to the same game.
This is true but requires some further arguments, which we have
chosen to omit by letting the adversary select $J^n_k$
(which also leads to the same game).
Moreover, it may at first seem that when selecting the pair
$(\pi^n_k,J^n_k)$, the adversary can compare outcomes of
different pairs and select the best disturbance among these outcomes.
However, this would violate
the requirement \eqref{20} that $\pi^n_k$ is selected based only on
the past outcomes, $J^n_1,\ldots,J^n_{k-1}$.
\end{remark}

\paragraph{SC strategies.}
In this paragraph we use underline to denote members of the space of
sample paths of a corresponding stochastic processes.
In particular, let $\uu{\calA}^n=(\PI^n\times\R^L_+)^\N$. Then
$\uu{a}^n=(\uu{\pi}^n,\uu{J}^n)\in\uu{\calA}^n$ is deterministic,
and $a^n=(\pi^n,J^n)\in\calA^n$ defined above is a process that has sample paths
in $\uu{\calA}^n$.

We topologize $\uu{\calA}^n$ with the product topology and recall that $\uu{\calB}$ is equipped with the topology of u.o.c. convergence;
we consider both spaces with their Borel $\sig$-fields.
A measurable map
$\beta^n:\uu{\calA}^n\to\uu{\calB}$ is said to be a {\it strategy}
for the SC in the $n$-th system. By selecting a strategy $\beta^n$, the SC
determines the control process $B^n$ as a response to
the adversary's control $a^n$ via
\begin{equation}\label{26}
B^n(\om)=\beta^n(a^n(\om)), \qquad \om\in\Om.
\end{equation}
For a strategy $\beta^n$ to be admissible, it must satisfy several additional
properties.

{\it - Nonnegativity constraint.}
Given $n$, if $\uu{a}^n=(\uu{\pi}^n,\uu{J}^n)\in\uu{\calA}^n$
and $\uu{B}^n=\beta^n(\uu{a}^n)$, then for the nonnegativity constraint
\eqref{eq:Wl} to hold we must require that
\begin{equation}\label{23}
\uu{A}^n_t-n\uu{B}^n_t\in\R^L_+ \text{ for all $t$, where }
\uu{A}^n_t=\sum_{k=1}^{nt}\uu{J}^n_k.
\end{equation}

{\it - Work conservation.}
We will only consider work conserving strategies.
For the precise details we need to introduce two pieces of notation.
First, let $R^n_t$ denote the idleness process
$R^n_t=nt-nB^{\tot,n}_t$.
Work conservation is the property that
the server works at full capacity whenever there is work in the system.
This can be expressed mathematically as
$\int_{[0,\iy)}W^{\tot,n}_tdR^n_t=0$.

Second, enter the {\it Skorohod map} on the half line.
This map, denoted throughout this paper by
$\Gam:\calD_{\R}[0,\iy)\to \calD_{\R_+}[0,\iy)\times \calD^+_{\R}[0,\iy)$,
sends a function $\psi$ to a pair $(\ph,\eta)$, where
\begin{equation}\label{e02}
\ph(t)=\psi(t)+\eta(t),\qquad \eta(t)=\sup_{0\le s\le t}\psi(s)^-,
\qquad t\ge0.
\end{equation}
The corresponding maps $\psi\mapsto\ph$ and $\psi\mapsto\eta$ are denoted
by $\Gam_1$ and $\Gam_2$, respectively.
{\it Skorohod's lemma} states that for $\psi\in \calD_\R[0,\iy)$,
if $(\ph,\eta)\in \calD_{\R_+}[0,\iy)\times \calD^+_{\R}[0,\iy)$, $\ph=\psi+\eta$ and
$\int_{[0,\iy)}\ph_td\eta_t=0$ then $(\ph,\eta)=\Gam(\psi)$.

Putting together the last two comments, noting that by \eqref{eq:Wl}
\[
W^{\tot,n}=A^{\tot,n}-nB^{\tot,n}=A^{\tot,n}-n\id+R^n,
\]
it follows that 
\begin{align}\label{eq:WR=Gamma}
    (W^{\tot,n},R^n)=\Gam(A^{\tot,n}-n\,\id).
\end{align}
Hence work conservation is expressed by requiring that the map $\beta^n$ satisfies the following.
Given $n$, if $\uu{a}^n=(\uu{\pi}^n,\uu{J}^n)\in\uu{\calA}^n$
and $\uu{B}^n=\beta^n(\uu{a}^n)$ then
$n\,\id-n\uu{B}^{n,\rm tot}=\Gam_2(\uu{A}^{\tot,n}-n\,\id)$, namely
\begin{equation}\label{24}
\uu{B}^{\tot,n}=\id-\Gam_2(n^{-1}\uu{A}^{\tot,n}-\id).
\end{equation}

{\it - Causality.}
We shall also require admissible strategies to be causal (or non-anticipating)
in the following sense. For every $n$ and $\uu{a}^n,\tilde{\uu{a}}^{n}\in\uu{\calA}^n$,
denoting $\uu{B}^n=\beta^n(\uu{a}^n)$, $\tilde{\uu{B}}^n=\beta^n(\tilde{\uu{a}}^n)$,
\begin{equation}\label{25}
\text{if $\uu{a}^n_k=\tilde{\uu{a}}^n_k$ for all $k\leq K$
then $\uu{B}^n_t=\tilde{\uu{B}}^n_t$ for all $t\in[0,(K+1)n^{-1})$.}
\end{equation}

A strategy $\beta^n$ is said to
be {\it admissible} for the $n$-th system if \eqref{23}, \eqref{24}
and \eqref{25} hold.
The collection of all admissible strategies for the $n$-th system
is denoted by ${\mathbb B}^n$.
The relation \eqref{26}, that defines the process $B^n$ in terms of
$a^n$ ``omega-by-omega'', will be written as $B^n=\beta^n(a^n)$ in what follows.

By our definitions, the process $a^n_k$ is $\{\calF^n_k\}$-adapted.
As a consequence, the process $A^n_t$ is $\{\calG^n_t\}$-adapted.
For the adaptedness of $B^n$ we have the following.

\begin{lemma}\label{lem:B is adapted}
If $a^n\in\calA^n$ and $\beta^n\in{\mathbb B}^n$ then the process
$B^n:=\beta^n(a^n)$ is $\{\calG^n_t\}$-adapted.
\end{lemma}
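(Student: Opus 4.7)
The plan is to fix $t\ge 0$, set $K=\lfloor nt\rfloor$, and exhibit $B^n_t$ as a Borel-measurable function of the first $K$ coordinates $(a^n_1,\ldots,a^n_K)$. Each coordinate $a^n_k=(\pi^n_k,J^n_k)$ is $\calF^n_k$-measurable (indeed $\pi^n_k\in\calF^n_{k-1}\subset\calF^n_k$ by \eqref{20}, and $J^n_k\in\calF^n_k$ by the very definition of this $\sig$-field), so the tuple is $\calF^n_K$-measurable, which equals $\calG^n_t$ by \eqref{30}. Together these steps will give $B^n_t\in\calG^n_t$, and arbitrariness of $t$ yields the claimed adaptedness.

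To produce the factorization, fix once and for all an arbitrary reference tail $(\bar a_m)_{m>K}\in(\PI^n\times\R^L_+)^{\{K+1,K+2,\ldots\}}$ (such a choice exists because each $\PI^{\ell n}$ is nonempty), and consider the continuous injection
\[
\iota_K:(\PI^n\times\R^L_+)^K\to\uu{\calA}^n,\qquad (x_1,\ldots,x_K)\mapsto(x_1,\ldots,x_K,\bar a_{K+1},\bar a_{K+2},\ldots).
\]
Set $\psi_t:=\mathrm{ev}_t\circ\beta^n\circ\iota_K$, where $\mathrm{ev}_t(\uu{B})=\uu{B}(t)$. Because $K=\lfloor nt\rfloor$ gives $t<(K+1)/n$, applying the causality requirement \eqref{25} to the pair $\uu{a}^n=a^n(\om)$ and $\tilde{\uu{a}}^n=\iota_K(a^n_1(\om),\ldots,a^n_K(\om))$---which coincide in their first $K$ coordinates---yields $\beta^n(a^n(\om))_t=\beta^n(\tilde{\uu{a}}^n)_t$, i.e.
\[
B^n_t(\om)=\psi_t\bigl(a^n_1(\om),\ldots,a^n_K(\om)\bigr)\qquad\text{for every }\om\in\Om.
\]

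All that remains is to verify that $\psi_t$ is Borel measurable: $\iota_K$ is continuous into the product topology on $\uu{\calA}^n$; $\beta^n:\uu{\calA}^n\to\uu{\calB}$ is Borel measurable by the standing hypothesis that a strategy is a measurable map; and $\mathrm{ev}_t:\uu{\calB}\to\R_+^L$ is continuous in the u.o.c.\ topology (u.o.c.\ convergence implies pointwise convergence, and elements of $\uu{\calB}$, being $1$-Lipschitz, are defined everywhere). The composition is therefore Borel, which closes the loop. The only delicate point is this measurability chain for $\psi_t$; the main input---causality---is already built into the definition of admissibility, so the rest of the argument is routine bookkeeping.
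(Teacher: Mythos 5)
Your proof is correct and follows essentially the same route as the paper's: fix $t$, set $K=\lfloor nt\rfloor$, append a fixed reference tail, invoke causality \eqref{25} (noting $t<(K+1)/n$) to write $B^n_t$ as a function of the first $K$ coordinates of $a^n$, and conclude via measurability of $\beta^n$ and $\calG^n_t$-measurability of $(a^n_1,\ldots,a^n_K)$. Your write-up is somewhat more explicit about the measurability chain for $\psi_t$ (continuity of the embedding $\iota_K$ and of the evaluation map $\mathrm{ev}_t$ on $\uu{\calB}$), which the paper glosses over, but the underlying argument is the same.
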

The proof of this result appears in the appendix.

\paragraph{Game's value.}
With a slight abuse of notation, for a given control process $B^n$ for the SC
and a control process $a^n=(\pi^n,J^n)$ for the adversary,
we define the corresponding cost
$C^n(B^n,a^n)$ by identifying it with $C^n(B^n,J^n)$ of \eqref{27}.
The value of the game is then defined by
\[
V^n=\inf_{\beta^n\in{\mathbb B}^n}\sup_{a^n\in\calA^n}C^n(\beta^n(a^n),a^n).
\]

\begin{remark}\label{rem00}
We have constructed the game in such a way
that its whole history is measurable w.r.t.\ the job size history.
Therefore, the requirement that the decisions of both players are adapted to the history
of job sizes entails
that the players are allowed to observe the complete game history
(such as the past values of $(W^n_t, B^n_t, D^n_t, Q^n_t)$)
when making their decisions.
\end{remark}

\subsubsection{Uncertainty classes and assumptions}\label{sec:uc}

For each $\pi\in\calP$, we denote by $\xi^\pi$ and $(\sigma^\pi)^2$ the mean and variance of $\pi$, i.e.
\begin{align*}
    &\xi^\pi=\int_{[0,\infty)} x\pi(dx),
    &(\sigma^\pi)^2=\int_{[0,\infty)} (x-\xi^\pi)^2\pi(dx).
\end{align*}
It will be convenient to work with the notation $q^\pi=\frac{1}{2}(\sigma^\pi)^2$.
Further assumptions on the uncertainty classes $\PI^{\ell n}$ are as follows.
\begin{assumption}\label{ass:Pi} (Uncertainty classes).
\begin{enumerate}
    \item \label{it:4th moment} $\sup_{n,\ell}\sup_{\pi\in\PI^{\ell n}}\int_{[0,\infty)}x^4\pi(dx)<\iy$.
	\item \label{it:K,b}
	For each $\ell$ there exist a constant $\mu^\ell>0$ and a compact set
	$K^{\ell}\subset\R\times(0,\iy)$ such that
	 $K^{\ell n}\to K^\ell$ in $d_{\rm H}$, the Hausdorff distance
	corresponding to the Euclidean metric on $\R^2$, where
	\[
	K^{\ell n}=\{(b^{\pi \ell n},q^\pi):\pi\in\PI^{\ell n}\},
	\qquad
	b^{\pi\ell n}:=n^{1/2}(\xi^\pi-1/\mu^\ell).
	\]
	\end{enumerate}
\end{assumption}
Thus $K^{\ell n}$ is the collection of prelimit drift and (half)
variance of all measures that lie in $\PI^{\ell n}$.
It follows from the assumption that $\PI^{\ell n}$ are closed for each $\ell$ and $n$
and Assumption \ref{ass:Pi}(1) that $K^{\ell n}$ are closed sets.
The assumption implies
that all service time distributions allowed to be used by the adversary
have the property that their mean converges to $1/\mu^{\ell}$ as $n\to\iy$.
Thus $\mu^\ell$ represents the first order service rate for type $\ell$.
These parameters are assumed to satisfy the following.

\begin{assumption}\label{assn:ht} (Heavy traffic).
The constants $\mu^\ell$ from Assumption \ref{ass:Pi} satisfy
$\sum_\ell (\mu^\ell)^{-1}=1$.
\end{assumption}

Assumptions \ref{ass:Pi} and \ref{assn:ht}
are our standing assumptions, that will be in force throughout this paper.
Let us label the classes so that
\begin{equation}\label{31}
h^1\mu^1\leq h^2\mu^2\leq ...\leq h^L\mu^L.
\end{equation}
By scaling $h$ we may and will assume without loss that $h^1\mu^1=1$.

\subsection{Main results}\label{sec22}

A {\it non-preemptive fixed priority policy} is a strategy for the SC according to which
service is noninterruptible and whenever the server becomes available
(or the system is empty and a new arrival occurs)
it admits into service a job of the type that has lowest index waiting in the queue
(or arriving) at that moment. Because we have labeled the types as in \eqref{31},
the corresponding policy obtained prioritizes
according to the index $h\mu$. This is often referred to in the literature as the
{\it nonpreemptive $c\mu$ policy}.
We will denote by $\beta^{*n}\in \B^n$ the strategy corresponding to
the nonpreemptive $c\mu$ policy for the $n$-th system.
Our first main result is that the sequence of strategies $\beta^{*n}$
is AO regardless of the behavior of the adversary.

\begin{theorem}\label{thm:cmu is opt}
Consider an arbitrary sequence of controls for the adversary, $a^n\in\calA^n$.
Then
\[
\lim_n\{\inf_{\beta^n\in\B^n}C^n(\beta^n(a^n),a^n)-C^n(\beta^{*n}(a^n),a^n)\}=0.
\]
\end{theorem}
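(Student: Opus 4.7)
The plan factors the proof into three stages: a policy-independence of the total workload, a uniform version of Reiman's snapshot principle, and a pointwise minimization argument that pins down the $c\mu$ rule.

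For the first stage, combining the work-conservation requirement \eqref{24} with \eqref{eq:WR=Gamma} shows that $W^{\tot,n}=\Gam_1(A^{\tot,n}-n\,\id)$ is a deterministic functional of $a^n$ alone. Thus for any fixed adversary control $a^n$, every $\beta^n\in\B^n$ produces the same total-workload trajectory as $\beta^{*n}$, and the cost comparison reduces to understanding how the SC distributes this common $\hat W^{\tot,n}$ across the $L$ types.

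The second stage is a uniform Reiman snapshot principle: for every $T>0$ and $\ell\in[L]$,
\[
\limsup_{n\to\iy}\sup_{a^n\in\calA^n}\sup_{\beta^n\in\B^n}
\E\bigl[\|\hat Q^{\ell n}-\mu^\ell\hat W^{\ell n}\|_T\bigr]=0.
\]
Since jobs of each type are served FIFO, $W^{\ell n}_t$ equals the sum of the next $Q^{\ell n}_t$ job sizes up to a single in-service residual, so $W^{\ell n}_t/Q^{\ell n}_t$ is essentially the empirical mean of those jobs. Assumption \ref{ass:Pi}(\ref{it:4th moment}) yields $n^{-1/2}\max_{k\le nT}J^{\ell n}_k\to 0$ in $L^1$ uniformly over $\PI^{\ell n}$, while Assumption \ref{ass:Pi}(\ref{it:K,b}) controls the means $\xi^\pi$ to within $O(n^{-1/2})$ of $1/\mu^\ell$; together they make the relevant empirical means converge uniformly to $1/\mu^\ell$ and the head-of-line discrepancy negligible. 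The bound does not involve $\beta^n$, since the error is controlled by the arrival process alone.

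For the third stage, I would use the elementary pointwise inequality
\[
h\cdot(\mu\odot w)=\sum_\ell h^\ell\mu^\ell w^\ell
\ge\Bigl(\min_\ell h^\ell\mu^\ell\Bigr)\sum_\ell w^\ell=w^{\tot},\qquad w\in\R_+^L,
\]
with equality iff $w$ is supported on $\argmin_\ell h^\ell\mu^\ell$. Substituting $w^\ell=\hat W^{\ell n}_t$ and applying the uniform snapshot principle yields, for any $\beta^n$, the lower bound $h\cdot\hat Q^n_t\ge \hat W^{\tot,n}_t-\eta^n_t$ with $\E[\|\eta^n\|_T]\to 0$. For the matching upper bound under $\beta^{*n}$, I would verify that its strict priority empties every non-minimizing buffer down to a single in-service job between consecutive arrivals, so that $\hat W^{\ell n}\to 0$ uniformly on compacts for $\ell\notin\argmin_\ell h^\ell\mu^\ell$, by the same maximum-job-size estimate. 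Integrating against $e^{-t}\,dt$ and dominating the tail $\int_T^\iy$ via a uniform $L^2$ bound on $\hat W^{\tot,n}$ (coming from the Lipschitz property of $\Gam$ applied to $A^{\tot,n}-n\,\id$ and the uniform moment assumption) converts these pointwise estimates into the required expected-cost comparison.

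The main obstacle is the uniform snapshot principle: the classical RSP is proved along a fixed tight sequence of policies, whereas here the error must be uniform over all admissible $\beta^n$ and all admissible $a^n$. This forces the estimates to depend only on the arrival stream, and the uniform fourth-moment condition in Assumption \ref{ass:Pi}(\ref{it:4th moment}) is precisely what supplies the tail control needed for the maximal job-size and empirical-mean fluctuations.
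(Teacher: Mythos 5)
Your overall architecture mirrors the paper's: policy-independence of the total workload via work conservation and the Skorohod map (the paper's Lemma \ref{lem:W,beta ind}), a snapshot principle uniform in the SC's strategy (Lemma \ref{lem:Q-W to 0}), the pointwise inequality $\sum_\ell h^\ell\mu^\ell w^\ell\ge h^1\mu^1\sum_\ell w^\ell$, and concentration of the workload on the cheapest type under the priority rule. The first two stages and the lower-bound half of the third are sound in outline, though your RSP sketch still needs the modulus-of-continuity estimate for the centered, scaled partial sums over windows of width $\|Q^n\|_T/n$ (this is where the fourth-moment assumption actually enters, via a Burkholder-type maximal bound), not only the maximal job size and the $O(n^{-1/2})$ control of the means.

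The genuine gap is in your justification of the upper bound under $\beta^{*n}$. The claim that strict priority ``empties every non-minimizing buffer down to a single in-service job between consecutive arrivals'' is false: each arrival epoch brings one job of every type, the total expected work per epoch equals the server capacity per epoch (heavy traffic), and the work brought by the high-priority types per epoch is random with order-one fluctuations, so their backlog can exceed the per-slot capacity over many consecutive slots. What the theorem needs, and what is actually true, is only that the high-priority workloads are $o(\sqrt n)$, i.e.\ $\sum_{\ell\ge2}\hat W^{\ell n}\to0$ in probability, and this cannot be deduced from the maximum-job-size estimate alone. The paper's Lemma \ref{lem:W To 0} proves it by a stopping-time/excursion argument: on an excursion of $\sum_{\ell\ge2}\hat W^{\ell n}$ from $\eps/2$ to $\eps$, the server devotes essentially all of its capacity to types $\ge2$, producing a drift term of size $\sqrt n(\tau-\theta)/\mu^1$ that would have to be compensated by the martingale and residual-job terms, which $\calC$-tightness rules out. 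You need some such stability/drift argument (or an analysis of the subcritical high-priority subsystem) in place of the per-slot emptying claim. A more minor issue: to dominate the tail $\int_{t_0}^\infty$ of the cost under $\beta^{*n}$ you need a moment bound on $\hat Q^n$ itself, as in the paper's Lemma \ref{lem:W,Q UI}, not only on $\hat W^{\tot,n}$, since the snapshot-principle error is not controlled uniformly over the infinite horizon.
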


Theorem \ref{thm:cmu is opt} allows us to reduce the asymptotic treatment of the game to that of a control problem in which the strategy for the SC has been fixed as $\beta^{*n}$
 and one optimizes only over the adversary controls. 
We now introduce a problem that describes the limiting behavior of
this control problem, hence of the game.

\paragraph{Martingale control problem (MCP).}

Given $w\in\R_+$, an {\it admissible control system for $w$} is a tuple
\[
\calS=(\bar\Om,\bar\calF,\{\bar\calF_t\},\bar\PP,(\sfX^\ell,\sfY^\ell)_\ell,\sfW^\tot,\sfR),
\]
where $(\bar\Om,\bar\calF,\{\bar\calF_t\},\bar\PP)$ is a filtered probability space,
and
\begin{enumerate}
    \item $\sfX^\ell$ are continuous $\bar\calF_t$-martingales with $\sfX_0^\ell=0$,
    \item $\sfY^\ell$ are
    $\bar\calF_t$-adapted with $\sfY^\ell_0=0$,
    \item $\bar\PP$-a.s., $(\sfW^\tot,\sfR)=\Gam(w+\sfX^\tot+\sfY^\tot)$,
    \item \label{it:dXy in chK} One has $\bar\PP$-a.s. $[\sfX^\ell,\sfX^{\ell'}]=0$ for all $\ell\neq\ell'$ and
    \begin{align*}
        \Big(\frac{\sfY^\ell_t-\sfY^\ell_s}{t-s},\frac{[\sfX^\ell]_t-[\sfX^\ell]_s}{2(t-s)}\Big)\in\text{ch}(K^\ell),\qquad 0\le s<t.
    \end{align*}
\end{enumerate}
Notice that Property 4 implies that both $\sfY^\ell$ and $[\sfX^\ell]$ have Lipschitz sample paths.
The collection of all admissible control systems for $w$ is denoted by
$\frS_w$.
The cost for the problem is a function from $\cup_{w\geq 0}\frS_w$ to $\R_+$, given by
\[
\sfC(\calS)=\bar\E\Big[\int_0^{\infty}e^{-t}\sfW^\tot_tdt\Big],
\]
where $\bar\E$ denotes expectation under $\bar\PP$,
and the value is defined as
\[
\sfV(w)=\sup_{\calS\in\frS_w}\sfC(\calS).
\]

In the MCP, the process $\sfW^\tot$ represents the limit
of the processes $\hat W^{\tot,n}=\sum_\ell\hat W^{\ell n}$
and $\sfR$ the limit of $\hat R^n$.
The processes $\sfX^\ell$ and $\sfY^\ell$ are related
to the arrival processes $A^n$ in a slightly more complicated way, to be explained
in \S\ref{sec:ao}.
In terms of notation, $\sfX^\tot$ and $\sfY^\tot$ stand for the sum over $\ell$ as per our
convention, but note that as far as workload is concerned,
the formulation above involves only one process, $\sfW^\tot$
(a notation $\sfW^\ell$ for the limit of $\hat W^{\ell n}$ is not needed).

For $(v_1,v_2,b,q)\in\R^4$ let
\begin{align*}
&\bar{\mathbb{H}}(v_1,v_2,b,q)=bv_1+qv_2,
\\
&\mathbb{H}^\ell(v_1,v_2)=\max_{(b,q)\in\text{ch}( K^\ell)}\bar{\mathbb{H}}(v_1,v_2,b,q),
\\
&\mathbb{H}^{\ell n}(v_1,v_2)=\max_{(b,q)\in\text{ch}( K^{\ell n})}\bar{\mathbb{H}}(v_1,v_2,b,q).
\end{align*}
Then the value of the MCP can be characterized in terms of the following HJB equation
\begin{align}\label{eq:HJB}
\sum_{\ell=1}^L\mathbb{H}^\ell(u'(w),u''(w))-u(w)+w=0,\quad w\geq 0,\tag{HJB}
\end{align}
that will always be considered
with boundary conditions $u'(0)=0$ and $\limsup_{w\to\infty}|u(w)|/w<\infty$.
We will be concerned with classical solutions, namely $\calC^2$ function
that satisfy the equation classically.

\begin{proposition}\label{pr:verification}
1. There exists a unique classical solution to \eqref{eq:HJB}, denoted throughout by $u$.
Moreover, $\sfV=u$.
\\
2. Given $w$ there exists an admissible control system which is optimal for the MCP.
Under this system, $(\sfW^\tot,\sfR)$ form a weak solution $(W,R)$ to the SDE
with reflection at the origin
\[
W_t=w+\int_0^tb(W_s)ds+\int_0^t\sig(W_s)dZ_t+R_t,
\]
where $Z$ is a standard BM (SBM).
Above, $b=\sum_\ell b^\ell$ and $\sig=(\sum_\ell(\sig^\ell)^2)^{1/2}$.
Moreover, if we set $q^\ell=\frac{1}{2}(\sig^\ell)^2$ then
$(b^\ell,q^\ell)$ is a Borel measurable function
$\R_+\to {\rm ch}(K^\ell)$ which satisfies
\[
\bar{\mathbb{H}}(u'(z),u''(z),b^\ell(z),q^\ell(z))=
\mathbb{H}^\ell(u'(z),u''(z)),
\qquad z\in\R_+.
\]
3. The function $u$ is nonnegative, non-decreasing and convex.
\end{proposition}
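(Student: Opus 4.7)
The plan for Part~1 is to exploit the structure of \eqref{eq:HJB}: each $\mathbb{H}^\ell$ is the support function of the compact convex set $\text{ch}(K^\ell)\subset\R\times(0,\infty)$, so it is convex and positively homogeneous in $(v_1,v_2)$, and because the $q$-coordinates of $K^\ell$ are uniformly bounded below by some $q_{\min}^\ell>0$, the equation is uniformly elliptic on test functions with $u''\ge 0$. I would first solve the truncated Neumann--Dirichlet problem on $[0,N]$ with $u'(0)=0$ and $u(N)$ fixed by an affine barrier, appealing to the Caffarelli--Cabr\'e regularity theory for fully nonlinear uniformly elliptic Bellman-type equations to obtain a $C^{2,\alpha}$ solution $u_N$. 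Uniform interior $C^{2,\alpha}$ estimates, combined with affine super- and sub-solutions that pin down linear growth, let me pass $N\to\infty$ via Arzel\`a--Ascoli to a classical $C^2$ solution $u$ on $\R_+$ with $|u(w)|\le C(1+w)$. Uniqueness follows from the standard comparison principle for this class.

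\textbf{Step 2 (Verification, $u\ge\sfV$).}
For any $\calS\in\frS_w$, apply It\^o's formula to $e^{-t}u(\sfW^\tot_t)$. By Skorohod's lemma $\sfR$ grows only on $\{\sfW^\tot=0\}$, so the reflection term $\int_0^t e^{-s}u'(\sfW^\tot_s)\,d\sfR_s$ vanishes courtesy of $u'(0)=0$. Property~4 of the admissible control system gives, for each $\ell$,
\[
u'(\sfW^\tot_s)\,d\sfY^\ell_s+\tfrac12 u''(\sfW^\tot_s)\,d[\sfX^\ell]_s\le \mathbb H^\ell\bigl(u'(\sfW^\tot_s),u''(\sfW^\tot_s)\bigr)\,ds.
\]
Summing over $\ell$ and substituting \eqref{eq:HJB} yields $d(e^{-s}u(\sfW^\tot_s))\le -e^{-s}\sfW^\tot_s\,ds+dM_s$ for a local martingale $M$. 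Localization, the linear growth of $u$, and the linear growth of $\bar\E[\sfW^\tot_t]$ (inherited from Property~4 via Lipschitzness of $\Gam_1$) give, upon $t\to\infty$, $u(w)\ge\sfC(\calS)$.

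\textbf{Step 3 (Attainment, $u=\sfV$, and Part~3).}
By Kuratowski--Ryll-Nardzewski, the upper-hemicontinuous nonempty-compact-valued multifunction $z\mapsto\argmax_{(b,q)\in\text{ch}(K^\ell)}\bigl(bu'(z)+qu''(z)\bigr)$ admits a Borel selector $(b^\ell,q^\ell):\R_+\to\text{ch}(K^\ell)$. Setting $b=\sum_\ell b^\ell$ and $\sig^2=2\sum_\ell q^\ell$ yields bounded Borel coefficients with $\sig^2\ge 2\sum_\ell q^\ell_{\min}>0$, and the standard 1D theory (Stroock--Varadhan) supplies a weak solution to the reflecting SDE $dW=b(W)\,dt+\sig(W)\,dZ+dR$, $W_0=w$. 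Setting $\sfY^\ell_t=\int_0^t b^\ell(W_s)\,ds$ and, on an enlarged space, orthogonal continuous martingales $\sfX^\ell$ with $d[\sfX^\ell]_t=2q^\ell(W_t)\,dt$ produces an admissible $\calS^*$ in which the inequalities of Step~2 become equalities, so $\sfC(\calS^*)=u(w)$ and hence $\sfV=u$, establishing Part~2. Part~3 is then read off from $u=\sfV$: nonnegativity is immediate from $\sfW^\tot\ge 0$; monotonicity and convexity of $w\mapsto\sfV(w)$ are inherited from the same properties of $w\mapsto\sfC(\calS)$ for each fixed control system, which in turn follow from the identity $\Gam_1(\phi+c)=\max(\phi+c,\Gam_1(\phi))$ for $c\ge 0$ (read off \eqref{e02}), showing that $\sfW^\tot_t$ is nondecreasing and convex in the initial condition $w$.

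\textbf{Main obstacle.}
The hardest part is Step~1: the nonlinearity $\sum_\ell\mathbb H^\ell$ is only piecewise smooth (being a support function), so classical Schauder theory is unavailable and one must invoke the Caffarelli--Cabr\'e / Evans--Krylov machinery while carefully verifying uniform ellipticity on the relevant class of test functions, constructing barriers that simultaneously enforce the Neumann condition at $0$ and the prescribed linear growth at $\infty$, and matching interior $C^{2,\alpha}$ estimates across the limit $N\to\infty$. A secondary but unavoidable subtlety in Step~3 is the possible discontinuity of the feedback $(b,\sig)$ arising from non-uniqueness of the argmax, which is why weak rather than strong existence is needed and why the uniform lower bound $\sig^2\ge 2\sum_\ell q^\ell_{\min}$ secured in Step~1 is essential.
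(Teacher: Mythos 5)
Your Steps 2 and 3 are sound and align with the paper's Steps 2, 4, and 5 (verification via It\^o, measurable selection for the optimal feedback, weak SDE construction, convexity of $\Gam_1$ in the initial condition). Your identity $\Gam_1(\phi+c)=\max(\phi+c,\Gam_1(\phi))$ for $c\ge 0$ and $\phi(0)=0$ is correct and is a slick alternative to the paper's direct convexity computation for $\Gam_1$.

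Step 1, however, diverges substantially from the paper and contains a genuine gap. The paper does \emph{not} solve a Neumann--Dirichlet problem and does \emph{not} pass $N\to\infty$ via PDE estimates. Instead, it first proves \emph{probabilistically} that the MCP value $\sfV$ satisfies $\sfV'(0)=0$ and $|\sfV(w)|\le c(1+w)$ (by coupling reflected paths at neighboring initial conditions and estimating the hitting time of $0$). It then cites \cite[Thm.~17.18]{gilbarg98} to solve the pure \emph{Dirichlet} problem \eqref{eq:HJB x} on $[0,x]$ with boundary data $f(0)=\sfV(0)$, $f(x)=\sfV(x)$, shows by verification and measurable selection that $f$ equals a stopped control value, and concludes via the dynamic programming principle that $f=\sfV$ on $[0,x]$ for every $x$. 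Thus $\sfV$ is automatically the $C^2$ solution, and the Neumann condition holds because $\sfV'(0)=0$ was already known. Your approach instead imposes the Neumann condition directly in the truncated PDE. That shifts the burden to existence theory for mixed Neumann--Dirichlet boundary-value problems for fully nonlinear uniformly elliptic operators; Caffarelli--Cabr\'e is primarily interior (and Dirichlet) regularity theory, and \cite[Thm.~17.18]{gilbarg98} is a Dirichlet result, so neither citation covers what you invoke. The ``affine barrier fixing $u(N)$'' is also unspecified --- without the knowledge that $\sfV$ is already the right object, it is unclear what boundary value to take, and you then need uniform bounds, not just interior ones, to push $N\to\infty$ without losing the Neumann condition in the limit. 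Finally, the remark that the operator is ``uniformly elliptic on test functions with $u''\ge 0$'' is confused: $\mathbb H^\ell(v_1,v_2)=\max_{(b,q)\in\text{ch}(K^\ell)}(bv_1+qv_2)$ has subgradient in $v_2$ always inside $[q_{\min}^\ell,q_{\max}^\ell]\subset(0,\iy)$, so the equation is uniformly elliptic with no sign restriction. Your uniqueness claim via a ``standard comparison principle'' on the half line with linear growth is also left unjustified; the paper obtains it by repeating the verification argument, which is the cleaner route given the other steps you already have.
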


Our second main result relates the stochastic game to the MCP and HJB equation.
\begin{theorem}\label{thm:Vn to V}
1. One has
$V^n\to\sfV(0)=u(0)$.
\\
2. There exists for each $n$ and $\ell$ a measurable function
$\psi^{\ell n}:\R_+\to\PI^{\ell n}$  such that
\[
\bar{\mathbb{H}}(u'(z),u''(z),b^{\psi^{\ell n}(z),\ell n},q^{\psi^{\ell n}(z)})
=
\mathbb{H}^{\ell n}(u'(z),u''(z)),\qquad z\in\R_+.
\]
3. If the SC uses the strategy $\beta^{*n}$
then it is AO for the adversary to select the control according to
\begin{equation}\label{301}
\pi^{\ell n}_k=\psi^{\ell n}(\hat W_{(k-1)/n}^{\tot,n}),\qquad k\ge1.
\end{equation}
Moreover, if we let
\[
v_k^{\ell n}=(u'(\hat W^{\tot,n}_{(k-1)/n}),u''(\hat W^{\tot,n}_{(k-1)/n})),
\qquad k\ge1,
\]
then the selection expressed by \eqref{301} can equivalently be stated
as letting
\[
(b,q)\in\argmax_{(b',q')\in K^{\ell n}}(b',q')\cdot v_k^{\ell n}
\]
and then selecting a member $\pi^{\ell n}_k\in\PI^{\ell n}_k$ such that
its prelimit drift and variance coefficients are given by
$(b^{\pi \ell n},q^{\pi \ell n})=(b,q)$.
\end{theorem}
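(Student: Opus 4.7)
The plan has three ingredients: reduce the game to an adversary-only problem, prove the upper bound $\limsup_n V^n\le u(0)$, and produce an explicit adversary strategy whose cost limit equals $u(0)$. By Theorem \ref{thm:cmu is opt}, fixing the SC strategy as the nonpreemptive $c\mu$ policy $\beta^{*n}$ loses only $o(1)$, so
\[
V^n=\sup_{a^n\in\calA^n}C^n(\beta^{*n}(a^n),a^n)+o(1).
\]
Combined with the snapshot-principle improvement announced in the introduction, which lets one replace $h\cdot\hat Q^n$ by a quantity proportional to $\hat W^{\tot,n}$ up to an asymptotically vanishing remainder uniformly in $a^n$, the analysis is reduced to a one-dimensional controlled state with limiting cost $\bar\E[\int_0^\iy e^{-t}\sfW^\tot_t dt]$.

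For the upper bound I would take an arbitrary sequence $a^n\in\calA^n$. Writing the Doob decomposition of the $\{\calG^n_t\}$-adapted centered arrival processes yields candidate approximating martingales $\hat X^{\ell n}$ and finite-variation parts $\hat Y^{\ell n}$; their drift and predictable-quadratic-variation increments are controlled by $(b^{\pi^{\ell n}_k \ell n},q^{\pi^{\ell n}_k})\in K^{\ell n}$. Assumption \ref{ass:Pi}(1) (uniform fourth moments) and compactness of $K^{\ell n}$ deliver $\calC$-tightness of $(\hat W^{\tot,n},\hat X^{\ell n},\hat Y^{\ell n})$ and uniform integrability of $\sup_{t\le T}\hat W^{\tot,n}_t$ against the discount. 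Passing to a subsequence, continuity of the Skorohod map $\Gam$ and Hausdorff convergence $K^{\ell n}\to K^\ell$ show any limit $(\sfW^\tot,\sfR,\sfX^\ell,\sfY^\ell)$ satisfies the constraints (1)--(4) of an admissible MCP control system starting at $w=0$, so its cost is bounded by $\sfV(0)=u(0)$ by Proposition \ref{pr:verification}. This produces $\limsup_n V^n\le u(0)$.

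For Part 2 I would apply Kuratowski--Ryll-Nardzewski twice. Continuity of $(u',u'')$ (Proposition \ref{pr:verification}) and compactness of $K^{\ell n}$ make the argmax in $(b,q)\cdot(u'(z),u''(z))$ an upper hemicontinuous compact-valued correspondence of $z\in\R_+$, which admits a Borel selector $(\bar b(z),\bar q(z))$. Since the map $\pi\mapsto(b^{\pi\ell n},q^\pi)$ on $\PI^{\ell n}$ is continuous (weak topology, using Assumption \ref{ass:Pi}(1) for uniform integrability in the variance), its fibers are closed, and a second application of Kuratowski--Ryll-Nardzewski lifts $(\bar b,\bar q)$ to a measurable $\psi^{\ell n}:\R_+\to\PI^{\ell n}$. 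Convex combinations would have to be invoked only if the argmax fell into $\text{ch}(K^{\ell n})\setminus K^{\ell n}$; but the Hamiltonian is linear in $(b,q)$, so a vertex, which lies in $K^{\ell n}$, attains it.

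For Part 3 and the matching lower bound, plug $\psi^{\ell n}$ into \eqref{301}, so the adversary selects $\pi^{\ell n}_k$ via a feedback of the current workload. Under the $c\mu$ rule and this feedback, rerun the tightness argument of the upper bound; any subsequential limit satisfies, almost surely and for a.e.\ $t$, the pointwise maximality relation
\[
\bar{\mathbb{H}}\bigl(u'(\sfW^\tot_t),u''(\sfW^\tot_t),\dot\sfY^\ell_t,\tfrac12\tfrac{d[\sfX^\ell]_t}{dt}\bigr)=\mathbb{H}^\ell(u'(\sfW^\tot_t),u''(\sfW^\tot_t)),
\]
so the limit coincides in law with the weak solution of the reflected SDE of Proposition \ref{pr:verification}(2). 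Applying It\^o's formula to $e^{-t}u(\sfW^\tot_t)$ and using $u'(0)=0$ with the HJB equation, optional stopping, and the growth bound $|u(w)|\le C(1+w)$, one obtains $\bar\E[\int_0^\iy e^{-t}\sfW^\tot_t dt]=u(0)$. Hence $\liminf_n V^n\ge u(0)$, and combined with the upper bound, $V^n\to u(0)=\sfV(0)$.

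The main obstacle is the last step: identifying the weak limit of the feedback-controlled prelimit dynamics with the SDE whose drift and diffusion coefficients may be discontinuous in $w$. Standard weak-convergence arguments do not by themselves characterize such an SDE uniquely; the HJB equation replaces that role, exactly as in \cite{ACR-LB, ACR}. The snapshot principle must also be delivered in a form uniform over the adversary's choices so that the reduction to $\hat W^{\tot,n}$ is legitimate both in the upper-bound argument (arbitrary $a^n$) and in the lower-bound argument (feedback $a^n$). These two uniformities, plus the measurable-selection-plus-HJB-verification bridge, are where the real work lies.
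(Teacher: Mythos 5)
Your architecture matches the paper's (reduce to a $\beta^{*n}$-fixed adversary problem via Theorem~\ref{thm:cmu is opt}, prove the upper bound by recognizing subsequential limits as admissible MCP control systems, construct the measurable selector for Part~2, and then prove the matching lower bound). The upper-bound sketch is essentially the paper's Section~\ref{sec:ub}, modulo suppressing the lemma that shows the discrete quadratic covariations and cross-variations converge appropriately (the paper's Lemma~\ref{lem:123} plus an invocation of \cite[Corollary VI.6.30]{jacod2013}) and the monotone-class argument needed to promote the martingale property to the limit filtration. Your Part~2 argument (two applications of Kuratowski--Ryll-Nardzewski, first on the $(b,q)$-argmax correspondence, then on the fiber $\{\pi:(b^{\pi\ell n},q^\pi)=(\bar b(z),\bar q(z))\}$) is a valid alternative to the paper's single application of \cite[Lemma 8.10]{budhiraja19} to $\ph(w,\pi)=u'(w)b^{\pi\ell n}+u''(w)q^\pi$.

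The genuine gap is in your lower bound, and you correctly sense it but do not resolve it. Your plan is: (a) pass to a weak limit of the feedback-controlled prelimit dynamics, (b) identify that limit in law with the reflected SDE of Proposition~\ref{pr:verification}(2), and (c) apply It\^o's formula to $e^{-t}u(\sfW^\tot_t)$ on the limit to conclude $\bar\E\!\int_0^\infty e^{-t}\sfW^\tot_t\,dt=u(0)$. Step~(b) does not follow from a standard weak-convergence argument, precisely because the drift and diffusion coefficients $b^{\ell,*},\sigma^{\ell,*}$ may be discontinuous in $w$; the martingale problem for the limit SDE need not be well posed, so nothing forces the weak limit to solve it, and you acknowledge this yourself as ``the main obstacle.'' Saying that ``the HJB equation replaces that role, exactly as in \cite{ACR-LB,ACR}'' is the right slogan, but you have not said what it means operationally. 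The paper's Section~\ref{sec:ao} never takes step~(b). It works entirely at the prelimit level: fix $t,x$, localize at $\tau=\tau(x,t,n)$, expand $e^{-\tau}u(\tilde W^n_\tau)$ by a \emph{discrete} Taylor step with Lagrange remainder along the workload chain $\ttW^n_{k}$, use the feedback optimality \eqref{eq:prelimit policy} and the HJB equation \eqref{eq:HJB} to evaluate the conditional expectations of the increment terms $T^n_k$ before any passage to the limit, and then prove that the six error terms $\eps^n_1,\dots,\eps^n_6$ vanish in expectation (using the fourth-moment assumption, the bound on the Skorohod reflection increments, and the Hausdorff convergence $K^{\ell n}\to K^\ell$). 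This yields $\liminf_n\E\!\int_0^\infty e^{-t}\tilde W^n_t\,dt\ge u(0)$ directly; the limit process is never characterized. Your step~(c) (It\^o on the continuous limit) is therefore not what is needed: one applies a discrete version of it to the \emph{prelimit} chain, and the HJB equation does its work on each increment. That this discrete argument closes is the actual content of the lower-bound section, and it is what is missing from your write-up.
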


\section{Comments and examples}
\label{sec3}

\subsection{On the drift-variance tradeoff}
\label{sec31}

Let us describe the simplest setting of our model for which
the MCP is not trivial.
This is the case when there is only one type of jobs,
hence the SC has no freedom at all and the server simply works
whenever there is work in the system. Moreover,
the uncertainty class consists of only two members
for each $n$. The adversary dynamically selects one of the two members
for each arrival.
This version of the model, although very simple,
is meaningful, and can be used to explain
the so called {\it drift-variance tradeoff}, which is present in a more
complex fashion in the full model.

Consider a diffusion process $W$ on $\R_+$ with reflection at zero,
for which the drift and diffusion coefficients can be dynamically controlled
as follows.
Two pairs of real numbers $(b_m,\sig_m)\in\R\times(0,\iy)$,
$m=1,2$, are given. A mode $m\in\{1,2\}$ can be dynamically chosen,
and accordingly the instantaneous drift-diffusion pair of $W$ is given by
$(b_m,\sig_m)$. One attempts to maximize the cost $E\int_0^\iy e^{-t}W_tdt$.
It is not hard to see that the MCP is equivalent to this problem
in the special case under consideration.
Now, if only the drift coefficient is subject to control
(that is, $\sig_1=\sig_2$) then it can be shown by a simple coupling that
a control that always selects the larger value will maximize the cost. Similarly,
if only the diffusion coefficient can be controlled, it is the one with
larger value that is optimal to select. The situation is different when
$b_1>b_2$ and $\sig_1<\sig_2$. This problem was solved
in \cite{Sheng78}, where it was called the {\it tortoise-hare problem},
and some extensions of it were studied in \cite{ACR} where it was referred to
as the {\it drift-variance tradeoff}. As shown in \cite{Sheng78}, the HJB equation
can be fully solved, and its solution reveals an optimal
tradeoff between the modes. Specifically, under the optimal control,
mode $1$ is selected at times when $W_t\ge w^*$ and mode $2$ when $W_t<w^*$.
Here, $w^*$ is a free boundary point characterized by
an equation expressing the so called {\it principle of smooth fit}
(see \cite{Sheng78} and \cite{ACR} for more details).

Thus, in the case where the uncertainty class has two members,
the asymptotics of our model can be
described by an explicit formula (i.e., the solution from \cite{Sheng78}
to the HJB equation), and moreover the asymptotic
behavior of the adversary is fully understood.

We will have more to say about finite uncertainty classes.
In any case, in its full generality, the MCP we study in this paper
can be viewed as the problem of finding an optimal drift-variance tradeoff
albeit in a more complex setting.

\subsection{On the extremal and dominating subsets of ${\rm ch}(K^{\ell n})$}
\label{sec32}


Our results show how the adversary behaves under an AO play.
In particular, Theorem \ref{thm:Vn to V} asserts that
the prelimit drift and variance coefficients are chosen
dynamically so as to maximize the expression
$\bar{\mathbb{H}}(v_1,v_2,b,q)$ over $(b,q)\in{\rm ch}(K^{\ell  n})$
for suitably defined $(v_1,v_2)$ depending on the current state.
A further insight on how the adversary acts can be gained
by arguing that the maximization may be restricted to a smaller
set.
Because $\bar{\mathbb{H}}$ is affine in $(b,q)$, it is clear that
the maximum is in the set, denoted $\pl_{\rm ext} K^{\ell n}$,
of extreme points of ${\rm ch}(K^{\ell  n})$.
We argue that one may restrict further the set where the maximum occurs.
Consider the partial order $\le$ on $\R^2$ defined by
$(b,q)\le(b',q')$ if $b\le b'$ and $q\le q'$.
For a compact set $K\subset\R^2$ let $\pl_{\rm dom}K$
denote the set of dominating points in $K$ w.r.t.\ this partial order.
That is, the smallest set of points $(b',q')$ such that
for  every $(b,q)\in K$ there exists $(b',q')$ in this set such that
$(b,q)\le(b',q')$.
In follows from Proposition \ref{pr:verification} that $u'$ and $u''$
are non-negative functions. As a result, $\bar{\mathbb{H}} (u'(w),u''(w),b,q)$
is increasing in $b$ and $q$. This implies that it is always advantageous
for the adversary to choose points in the set $\pl_{\rm dom}K^{\ell n}$.
Combining the two observations, the maximum can be restricted a priori to
\[
\pl_{\rm ext}K^{\ell n}\cap\pl_{\rm dom}K^{\ell n}.
\]
An example is shown in Figure \ref{fig:difpoints}.

The simplification becomes much more significant in the special case of
polygons. In this case the sets
$M^{\ell n}:=\pl_{\rm ext}K^{\ell n}\cap\pl_{\rm dom}K^{\ell n}$
and $M^{\ell}:=\pl_{\rm ext}K^{\ell}\cap\pl_{\rm dom}K^{\ell}$
are finite. The HJB equation simplifies to
\begin{align*}
	\sum_{\ell=1}^L\max_{(b,q)\in M^{\ell}}\bar{\mathbb{H}}
	(u'(w),u''(w),b,q)-u(w)+w=0,\qquad w\geq 0.
\end{align*}
We arrive at a conclusion that uncertainty classes given
by polygonal domains are no different than finite uncertainty classes
as far as our setting is concerned.
Beyond insight on the behavior of the adversary, this observation
is relevant also for numerical solutions to the HJB equation,
where it becomes much more manageable to numerically solve the
equation when optimizing over a finite set.
\begin{center}
	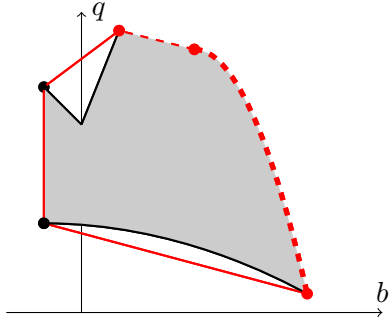
\begin{figure*}
	\begin{center}
		\begin{tikzpicture}[scale=1]
		\draw[->] (-1, 0) -- (4, 0) node[above] {$b$};
		\draw[->] (0, 0) -- (0,4) node[right] {$q$};
		\filldraw[black] (-0.5,3) circle (2pt) node[anchor=south east]{};
		\filldraw[red] (0.5,3.75) circle (2pt) node[anchor=south east]{};
		\filldraw[red] (1.5,3.5) circle (2pt) node[anchor=south east]{};
		\filldraw[black] (-0.5,1+0.25*0.75) circle (2pt) node[anchor=south east]{};
		\filldraw[red] (3,0.25) circle (1pt) node[anchor=south east]{};
		\draw[red,  thick, dashed, name path= C] (1.5,3.5) -- (0.5,3.75);
		\draw[black,  thick, name path= D] (0,2.5) -- (0.5,3.75);
		\draw[black,  thick, name path= E] (-0.5,3) -- (0,2.5);
		\draw[red, thick, name path= F] (-0.5,3) -- (-0.5,1+0.25*0.75);
		\draw[red, line width=2pt, dashed, name path= B]  (1.5,3.5) parabola                 (3,0.25);
		\draw[red,  thick, name path= G]  (-0.5,1+0.25*0.75) --                  (3,0.25);
		\draw[red,  thick, name path= H]  (-0.5,3) --                   (0.5,3.75);
		\draw[black,  thick, name path=A]   (-0.5,1+0.25*0.75) parabola               (3,0.25);
		\tikzfillbetween[of=A and B]{gray!40}
		\tikzfillbetween[of=A and C]{gray!40}
		\tikzfillbetween[of=A and E]{gray!40}
		\draw[red,  thick, dashed] (1.5,3.5) -- (0.5,3.75);
		\draw[black,  thick] (0,2.5) -- (0.5,3.75);
		\draw[black,  thick] (-0.5,3) -- (0,2.5);
		\draw[red, thick] (-0.5,3) -- (-0.5,1+0.25*0.75);
		\draw[red, line width=2pt, dashed]  (1.5,3.5) parabola                 (3,0.25);
		\draw[red,  thick]  (-0.5,1+0.25*0.75) --                  (3,0.25);
		\draw[red,  thick]  (-0.5,3) --                   (0.5,3.75);
		\draw[black,  thick]   (-0.5,1+0.25*0.75) parabola               (3,0.25);
		\filldraw[black] (-0.5,3) circle (1pt) node[anchor=south east]{};
	\filldraw[red] (0.5,3.75) circle (2pt) node[anchor=south east]{};
	\filldraw[red] (1.5,3.5) circle (2pt) node[anchor=south east]{};
	\filldraw[black] (-0.5,1+0.25*0.75) circle (2pt) node[anchor=south east]{};
	\filldraw[red] (3,0.25) circle (2pt) node[anchor=south east]{};
		\end{tikzpicture}
	\end{center}
		\caption{\sl An example of a set $K$ and its extremal and dominating sets.
The set $K$ is shown in gray.
The boundary of ${\rm ch}(K)$ is in red.
The set $\pl_{\rm ext} K$ consists of the two black points, three red
points and the red dashed bold curve.
The set $\pl_{\rm dom} K$ consists of the three red points, the red dashed 
and the red bold dashed curve. Thus the set $\pl_{\rm ext} K\cap\pl_{\rm dom} K$
is given by the three red points and the red bold dashed curve.
\label{fig:difpoints}}
	\end{figure*}
\end{center}

\subsection{Example: Gamma distribution}

Gamma distributions are sometimes used to model service duration
in the context of queueing systems; see for example
\cite{tak61} for an early such reference.
Our goal here is to determine what collections of Gamma distributions
give rise to a reasonable choice of
an uncertainty class $K^{\ell n}$, and also what $K^{\ell n}$
may look like for a natural choice of an uncertainty class
defined in terms of the distribution parameters.
Because there is no difference in our treatment between the various
types $\ell\in[L]$, fix $\ell$ and let it be removed from all notation.

For $\al>0$, $\beta>0$,
Gamma$(\al,\beta)$ is the distribution on $\R_+$ that has a density
$f$ given by
\[
f(x)=\frac{\beta^\al}{\Gam(\al)}x^{\al-1}e^{-\beta x}.
\]
The mean and variance are given by $\al/\beta$ and $\al/\beta^2$,
respectively.
For the prelimit  coefficients, consider
a target set $K^n$ given as a rectangle,
$$
K^{n}=[b^n_1,b^n_2]\times[q^n_1,q^n_2],
$$
and assume that $b^n_1$, $b^n_2$, $q^n_1$ and $q^n_2$
converge to $b_1,b_2, q_1$ and  $q_2$, respectively.
Consider $\PI^n$ to be a collection of
Gamma$(\alpha,\beta)$ distributions of the form
\begin{equation}\label{300}
\PI^n=\{\text{Gamma}(\al,\beta):(\al,\beta)\in G^n\},
\end{equation}
where $G^n\subset(0,\iy)^2$.
What should $G^n$ be in order to achieve the above target set?
To answer this, compute the prelimit coefficients for
$\pi\in \PI^n$ with parameters $\alpha,\beta$. Then
\[
b^{\pi}=\sqrt{n}\Big(\dfrac{\alpha}{\beta}-\mu\Big) \quad\text{ and }\quad q^\pi=\dfrac{\alpha}{2\beta^2}.
\]
Note first that $b\in [b^n_1,b^n_2]$ implies that 
$$\dfrac{\alpha}{\beta}\in \mu+\Big[\dfrac{b^n_1}{\sqrt{n}},\dfrac{b^n_2}{\sqrt{n}}\Big].$$
In addition,
$$q^\pi=\dfrac{\alpha}{2\beta^2}=\dfrac{1}{2\beta}\times\dfrac{\alpha}{\beta}\in[q^n_1,q^n_2] .$$
The last two equations imply
$$\beta\in \bigg[\dfrac{1}{2q^n_2(\mu+\frac{b^n_2}{\sqrt{n}})}, \dfrac{1}{2q^n_1(\mu+\frac{b^n_1}{\sqrt{n}})}\bigg],$$
and 
$$\alpha\in \mu\beta+\beta \Big[\dfrac{b^n_1}{\sqrt{n}},\dfrac{b^n_2}{\sqrt{n}}\Big].$$
This gives rise to
$$G^n=\left\lbrace (\alpha,\beta),\, \beta\in \bigg[\dfrac{1}{2q^n_2(\mu+\frac{b^2_2}{\sqrt{n}})}, \dfrac{1}{2q^n_1(\mu+\frac{b^1_2}{\sqrt{n}})}\bigg],\, \alpha\in \mu\beta+\beta \Big[\dfrac{b^n_1}{\sqrt{n}},\dfrac{b^n_2}{\sqrt{n}}\Big]\right\rbrace.$$

 
The reverse direction is also of interest, namely for a given "reasonable"
choice of $G^n$ we shall ask what $K^n$ and $K$ look like.
Consider first $\PI^n$ given as in \eqref{300}, where
$G^n=[\alpha^n_1,\alpha^n_2]\times[\beta^n_1,\beta^n_2]$.
We argue that in this case the disturbances must be $o(1)$
to meet our assumptions,
hence this choice does not lead to what we have called
a high uncertainty regime. Indeed, this choice leads to
\[b^\pi\in\Big[\sqrt{n}\Big(\dfrac{\alpha^n_1}{\beta^n_2}-1\Big), \sqrt{n}\Big(\dfrac{\alpha^n_2}{\beta^n_1}-1\Big)\Big]\text{ and } q^\pi\in \Big[\dfrac{\alpha_1^n}{2(\beta^n_2)^2},\dfrac{\alpha_2^n}{2(\beta^n_1)^2}\Big].
\]
In order for $K^{n}$ to converge to a compact set
it is necessary (but not sufficient) that $\sqrt{n}(\alpha^n_i/\beta^n_k-\mu)=O(1)$
for all $i,k\in \lbrace 1,2 \rbrace$. 
Hence one must have $\lvert \beta^n_1-\beta^n_2\rvert =O(n^{-1/2})$.
For $\pi$ such that $b^\pi=O(1)$, 
\begin{equation}
\label{eq:gammamean}\dfrac{\alpha}{\beta}=\mu+O(\tfrac{1}{\sqrt{n}}),
\end{equation}
which means 
\begin{equation}\label{eq:gammavariance}q^\pi=\dfrac{\alpha}{2\beta^2}
=\dfrac{\mu}{2\beta}+O(\tfrac{1}{\sqrt{n}}).\end{equation}
For the set $K^n$ to converge, $\beta^n_1$ and $\beta^n_2$ must
converge, and because of \eqref{eq:gammavariance}
and $\lvert \beta^n_1-\beta^n_2\rvert \to 0$ they must have
the same limit $\beta^*$.
There is only one choice of limiting variance given by $\mu/(2\beta^*)$.
Although our results are not vacuous in this case, they do not
give significant information.

We thus turn to a different definition of $G^n$, which makes
$\al/\beta$ nearly constant. A natural set that satisfies this
requirement is
\[
G^n=\{(\al,\beta):\beta\in[\beta_1,\beta_2],
\al-\beta\in[-\al^n_1,\al^n_2]\},
\]
for $\sqrt{n}\al^n_i\to\al_i\in\R$.
We can then introduce a new parameterization $(b,\beta)$,
where $b=\sqrt{n}(\al/\beta-\mu)$.
If $\pi=\text{Gamma}(\alpha,\beta)$, we obtain
$b^\pi=b$ and $q^\pi=\mu/(2\beta)+b/(2\beta\sqrt{n})$.
This gives rise to
$$
K^n=\left\lbrace \Big(b,\dfrac{\mu}{2\beta}+\dfrac{b}{2\beta\sqrt{n}}\Big),
\, \beta\in [\beta_1,\beta_2],\,
b\in\Big[-\sqrt{n}\dfrac{\alpha^n_1}{\beta}, \sqrt{n}\dfrac{\alpha^n_2}{\beta}\Big]\right\rbrace.
$$
Taking the Hausdorff limit, using \eqref{eq:gammavariance}, $K^n$ converges to 
\begin{equation}\label{310}
K=\left\lbrace (b,q),  q\in \Big[\dfrac{\mu}{2\beta_2},\dfrac{\mu}{2\beta_1}\Big],\, b\in \Big[-\dfrac{2\al_1}{q},\dfrac{2\al_2}{q}\Big]\right\rbrace.
\end{equation}
Possible sets $K$ corresponding to the above are shown in Figure \ref{fig1}.

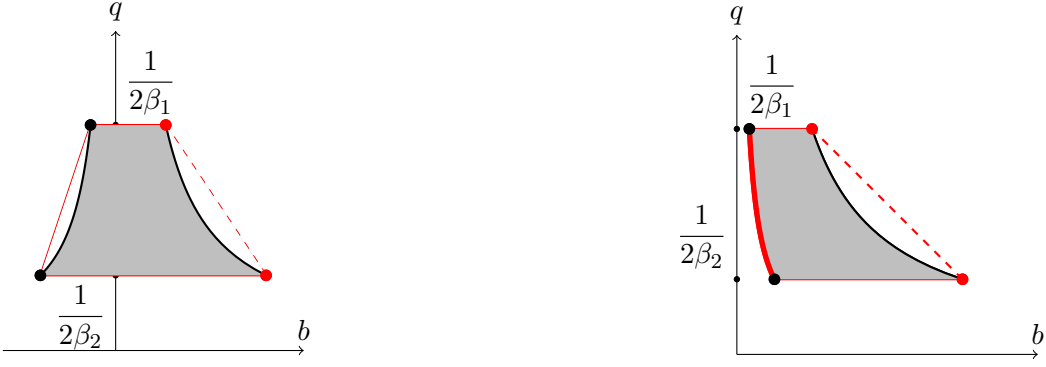
\begin{figure*}[t]
	\begin{subfigure}[b!]{0.49\textwidth}
	\begin{tikzpicture}[scale=1, rotate=90, yscale=-1]
	\draw[->, name path=C] (0, 0) -- (4.25, 0) node[above] {$q$};
	\draw[->, name path=D] (0, -1.5) -- (0,2.5) node[above] {$b$};
	\draw[ domain=1:3, smooth, thick, variable=\x, black,  name path=A] plot ({\x}, {-1/\x});
	\draw[ domain=1:3, variable=\x, black, thick , name path=B]  plot ({\x}, {2/\x});
	\filldraw[black] (1,0) circle (1pt) node[anchor=north east]{$\dfrac{1}{2\beta_2}$};
	\filldraw[black] (3,0) circle (1pt) node[anchor=south west]{$\dfrac{1}{2\beta_1}$};
	\draw[ domain=-1:2, smooth, variable=\y, red,  thick, name path=E] plot ({1}, {\y});
	\draw[ domain=-1/3:2/3, smooth, variable=\y, red,  thick, name path=F] plot ({3}, {\y});
	\draw[red,  thick, dashed, name path=G] (1,2) -- (3,2/3);
	\draw[red,  thick, name path=H] (1,-1) -- (3,-1/3);
	\filldraw[red] (1,2) circle (2pt) node[anchor=south east]{};
	\filldraw[red] (3,2/3) circle (2pt) node[anchor=south east]{};
	\filldraw[black] (1,-1) circle (2pt) node[anchor=south east]{};
	\filldraw[black] (3,-1/3) circle (2pt) node[anchor=south east]{};
	\tikzfillbetween[of=E and F]{gray!50}
		\tikzfillbetween[of=A and H]{white}
				\tikzfillbetween[of=B and G]{white}
		\draw[ domain=1:3, smooth, thick, variable=\x, black] plot ({\x}, {-1/\x});
	\draw[ domain=1:3, variable=\x, black, thick ]  plot ({\x}, {2/\x});
	\filldraw[red] (1,2) circle (2pt) node[anchor=south east]{};
\filldraw[red] (3,2/3) circle (2pt) node[anchor=south east]{};
\filldraw[black] (1,-1) circle (2pt) node[anchor=south east]{};
\filldraw[black] (3,-1/3) circle (2pt) node[anchor=south east]{};
	\end{tikzpicture}
		\end{subfigure}
	\begin{subfigure}[b!]{0.49\textwidth}
	\begin{tikzpicture}[scale=1, rotate=90, yscale=-1]
	\draw[->] (0, 0) -- (4.25, 0) node[above] {$q$};
	\draw[->] (0, 0) -- (0,4) node[above] {$b$};
	\draw[ domain=1:3, smooth, variable=\x, red, line width=2pt, name path=A] plot ({\x}, {0.5/\x});
	\draw[ domain=1:3, variable=\x, black, thick, name path= B]  plot ({\x}, {3/\x});
	\filldraw[black] (1,0) circle (1pt) node[anchor=south east]{$\dfrac{1}{2\beta_2}$};
	\filldraw[black] (3,0) circle (1pt) node[anchor=south west]{$\dfrac{1}{2\beta_1}$};
	\draw[ domain=0.5:3, smooth, variable=\y, red,  thick, name path=C] plot ({1}, {\y});
	\draw[ domain=1/6:1, smooth, variable=\y, red,  thick, name path=D] plot ({3}, {\y});
	\draw[red,thick, dashed] (1,3) -- (3,1);
	\filldraw[red] (1,3) circle (2pt) node[anchor=south east]{};
	\filldraw[red] (3,1) circle (2pt) node[anchor=south east]{};
	\filldraw[black] (1,0.5) circle (2pt) node[anchor=south east]{};
	\filldraw[black] (3,1/6) circle (2pt) node[anchor=south east]{};
	\tikzfillbetween[of=A and B]{gray!50}
		\draw[ domain=1:3, smooth, variable=\x, red, line width=2pt] plot ({\x}, {0.5/\x});
	\draw[ domain=1:3, variable=\x, black, thick]  plot ({\x}, {3/\x});
		\filldraw[red] (1,3) circle (2pt) node[anchor=south east]{};
	\filldraw[red] (3,1) circle (2pt) node[anchor=south east]{};
	\filldraw[black] (1,0.5) circle (2pt) node[anchor=south east]{};
	\filldraw[black] (3,1/6) circle (2pt) node[anchor=south east]{};
	\end{tikzpicture}
			\end{subfigure}
\caption{\label{fig1}
	\sl
The set $K$ of \eqref{310}. The extreme and dominating sets are shown according
to the same convention as in Figure \ref{fig:difpoints}.
Left: $\alpha_1>0$ and $\al_2>0$. Right: $\al_1<0$ and $\al_2>0$. (The case $\al_1<0$ and $\al_2<0$ is similar to the first one). In both cases, the set of extreme dominating
points is given by the two red points.
}
\end{figure*}


\subsection{Example: Finite uncertainty class}

Assume that for each $\ell$, the uncertainty class $\PI^{\ell n}$
consists of finitely many measures. For simplicity, assume also that
they all have the same cardinality, $M$. We label the members of
each uncertainty class by $m\in[M]$, and, as in
\S \ref{sec31}, refer to $m$ as modes.
Thus each $\PI^{\ell n}$ consists of $\{\pi^{\ell mn}:m\in[M]\}$.
In this case the game can be described in very simple terms.
In the $n$-th system,
for each $(\ell,m)$ there is an IID sequence of $\pi^{\ell m n}$-distributed
RVs, denoted $J^{\ell m n}_k$, $k\in\N$. These sequences are
mutually independent. At each arrival time, $k/n$, for each type $\ell$,
there are $M$ job candidates, namely $J^{\ell m n}_k$, $m\in[M]$.
The adversary selects one of the $M$ candidates and this will
be the job to actually arrive at buffer $\ell$. When making the selection,
the adversary has access to the history of the system,
as well as to the label $m$ (thus the distribution)
of each of the candidates, but it does not
have access to the RVs $J^{\ell m n}_k$ themselves.
As in our general setting, the SC's role is to split the server's
effort among the types.
See Figure \ref{fig:Finite class}.

\begin{figure*}
	\centering
	\begin{tikzpicture}[scale=1.5]
		\coordinate (l1ul) at (1.5,2.5);
		\coordinate (lLul) at (5,2.5);
		\coordinate (l1dl) at (1.5,2);
		\coordinate (lLdl) at (5,2);
		\coordinate (l1ur) at (2,2.5);
		\coordinate (lLur) at (5.5,2.5);
		\coordinate (l1dr) at (2,2);
		\coordinate (lLdr) at (5.5,2);
		\node (DM)   at (3.5,1) {};
		\coordinate (s)   at (3.5,0.25);
		
		\draw[thick] (l1ul)--(l1dl)--(l1dr)--(l1ur);
		\draw[thick] (lLul)--(lLdl)--(lLdr)--(lLur);
		\node[thick] at (3.5,2.25) {$\dots$};
		\draw[thick] (s) circle (0.25); 
		\node[anchor=west] at (3.75,0.25) {Server};
		\node (AAA) at ( $(l1dr)+(1.2,-0.7)$ ) {};
		\draw[thick,-o] (l1dr) -- ( $(l1dr)+(1.2,-0.7)$ );
		\draw[thick,-o] (lLdl)-- ++(-1.2,-0.7);
		\draw[thick,o->] (3.5,1)-- ++(0,-0.45); 
		\draw[thick,o-] (1.55,2.85)-- ++(-0,0.35);
		\draw[thick,o-] (1.95,2.85)-- ++(0,0.35);
		\draw[thick,o-] (5.05,2.85)-- ++(-0,0.35);
		\draw[thick,o-] (5.45,2.85)-- ++(0,0.35);
		\draw[thick,dotted] (0.5,2.75) -- (6,2.75);
		\draw[thick,o->] (1.75,2.625) -- (1.75,2.3);
		\draw[thick,o->] (5.25,2.625) -- (5.25,2.3);
		\node[anchor=west] at (6,2.75) {Adversary};
		\draw[thick] (5.05,2.85)--(5.25,2.625);
		\draw[thick] (1.55,2.85)--(1.75,2.625);
		\draw[thick] ( $(l1dr)+(1.2,-0.7)$ )--(3.5,1);
		\draw[thick,dotted] (0.5,1.1) -- (6,1.1);
		\node[anchor=west] at (6,1.1) {SC};
	\end{tikzpicture}
	\caption{\label{fig:Finite class}
	\sl
Illustration of the case with finite uncertainty classes, specifically
$M=2$. With two streams of IID job sizes per type, the adversary dynamically controls
which stream is admitted into the buffer, and the SC controls the server's
effort allocation.}
\end{figure*}
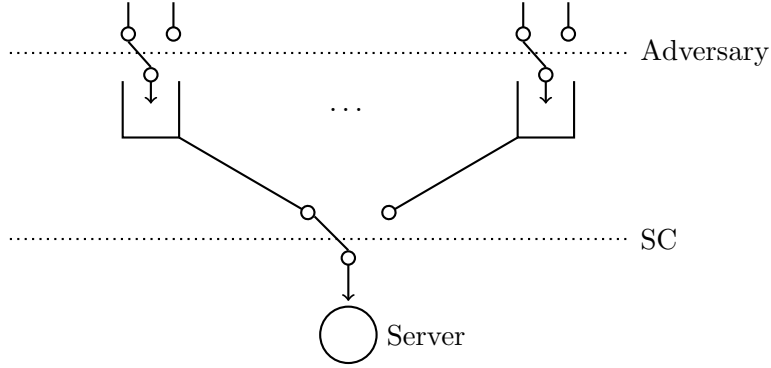

It is assumed that all distributions alluded to above have
a finite fourth moment, so as to satisfy our assumptions.
Next, to meet our scaling assumptions, it is assumed that
\begin{align*}
	&\E[J_k^{\ell mn}]=\frac{1}{\mu^\ell}+\frac{b^{\ell m}}{\sqrt{n}}+o(\tfrac{1}{\sqrt{n}}),
	&	\frac{1}{2}\text{Var}(J_k^{\ell mn})=q^{\ell m}+o(1),
\end{align*}
where $q^{\ell m}>0$.
By definition, for $\pi=\pi^{\ell mn}$, we have
$b^{\pi \ell n}=b^{\ell m}+o(1)$.
As a consequence, $K^{\ell,n}\to K^\ell$ in $d_H$, where $K^\ell$
consists of the pairs $(b^{\ell m},q^{\ell m})$.


We now describe the choices made by the adversary.
As argued in \S\ref{sec32},
among all points in $K^{\ell n}$, only those in
$\Del^{\ell n}:=\pl_{\rm ext}K^{\ell n}\cap\pl_{\rm dom}^{\ell n}K^{\ell n}$ 
are taken into account. Let $M^{\ell n}\subset[M]$
denote the set of labels corresponding to points in $\Del^{\ell n}$.
An example of a finite set $K^{\ell n}$ is shown in
Figure \ref{fig:finite K}(a). In this example there are three member of the set
$\pl_{\rm ext}K^{\ell n}\cap\pl_{\rm dom}K^{\ell n}$, shown in red.
We argue that these three points define a finite partition of $\R_+^2$
according to which the adversary's choices are made.
To this end, recall the notation
\[
v_k^{\ell n}=(u'(\hat W^{\tot,n}_{(k-1)/n}),u''(\hat W^{\tot,n}_{(k-1)/n})).
\]
By Proposition \ref{pr:verification}, this process takes values
in $\R_+^2$.
By Theorem \ref{thm:Vn to V}, the choice made
at time $k/n$ is to select mode
\begin{align}\label{837}
m_k^{\ell n}\in\argmax_{m\in M^{\ell n}}(b_m,q_m)\cdot v_k^{\ell n}.
\end{align}
Alternatively, this can be expressed as
\[
m^{\ell n}_k\in\argmax_{m\in M^{\ell n}}\|(b_m,q_m)\|\cos(\theta_m),
\]
where $\theta_m$ is the angle between $(b_m,q_m)$
and $v^{\ell n}_k$.
This rule corresponds to
a partition of $\R_+^2$ into $|M^{\ell n}|$ cone-shaped regions,
as shown in Figure \ref{fig:finite K}(b).
Namely, the adversary chooses a mode
at the $k$-th step according to the region to which $v^{\ell n}_k$
belongs.

\begin{figure}	
	\begin{subfigure}{0.5\textwidth}
		\centering
		\hfill\includegraphics[width=8cm]{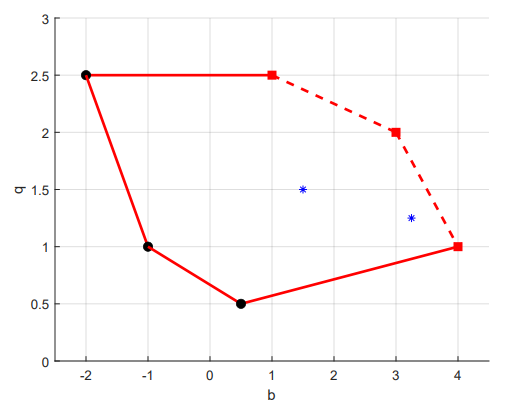} 
		\caption{}\label{subfig:dom}
	\end{subfigure}
	\begin{subfigure}{0.5\textwidth}
		\hfill\includegraphics[width=8cm]{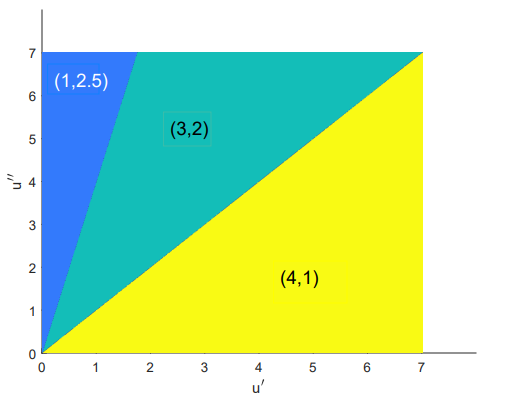}
		\caption{}\label{subfig:DecReg}
	\end{subfigure}
	\caption{
	\sl
	(a) A finite uncertainty class $K^{\ell n}$ consisting of 8 points, shown in
blue (2 points), black (3 points) and red (3 points).
The extreme points are the 3 black and 3 red points. The boundary of ${\rm ch}(K^{\ell n})$ is shown in red; its subset of dominating set is shown
in dashed red line. The intersection $\pl_{\rm ext}K^{\ell n}\cap\pl_{\rm dom}K^{\ell n}$
consists of the three red points.
	(b)
	A partition of $\R_+^2$ into decision regions.
	The decision is made according to the region to which
	$v^{\ell n}_k$ belongs.
}
\label{fig:finite K}
\end{figure}

Note that if $L=1$ and the set of points that are
extremal and dominating consists of exactly two points,
then we are back in the scenario described in \S\ref{sec31}.
The above discussion shows that in this case the decision
is according to whether
%
\begin{align*}
	(b_1,q_1)\cdot v^{\ell n}_k \lessgtr (b_2,q_2)\cdot v^{\ell n}_k.
\end{align*}

\section{AO of the $c\mu$ policy}
\label{sec:cmu}
In this section we prove Theorem \ref{thm:cmu is opt}.
The collection of all processes comprising the queueing model are uniquely defined once
the players have selected their actions.
Given $n$, a control $a^n\in\calA^n$ chosen by the adversary and
a strategy $\beta^n\in\B^n$ chosen by the SC, we shall always assume
the collection of processes is the one determined by the pair $(a^n,\beta^n)$
and will not explicitly specify the dependence on this pair.
In \S\ref{sec:uc} we defined the tuple $(\xi^\pi,b^{\pi\ell n},\sig^\pi,q^\pi)$ corresponding
to a measure $\pi$. When one substitutes the measures $\pi_k^{\ell n}$
chosen by the adversary, this tuple becomes a stochastic process,
for which we shall use the notation
\begin{align}\label{33}
    &\bxi_k^{\ell n}=\xi^{\pi_k^{\ell n}},
    &&\bb^{\ell n}_k=b^{\pi_k^{\ell n},\ell n},
    &\bsigma_k^{\ell n}=\sig^{\pi_k^{\ell n}},
    &&\bq^{\ell n}_k=q^{\pi_k^{\ell n}}.
\end{align}

We need the notion of convergence in probability uniformly
in the strategy $\beta^n$, and related notions.

\begin{definition}
Let $\{\psi^n_{a^n,\beta^n}\}$ and $\{\varphi^n_{a^n,\beta^n}(\cdot)\}$
be a sequence of RVs and, respectively, stochastic processes,
that depend on $n\in\N$, $a^n\in\calA^n$ and $\beta^n\in\B^n$.
\begin{itemize}
     \item Given a sequence $a^n\in\calA^n$,
     the sequence $\{\psi^n_{a^n,\beta^n}\}$
     is said to converge to 0 in probability uniformly in $\beta^n\in\mathbb{B}^n$ if for any $\eps>0$,
     \begin{align*}
         \sup_{\beta^n\in\mathbb{B}^n}\prob{\abs{\psi^n_{a^n,\beta^n}}\geq\eps}\to 0
     \end{align*}
 	as $n\to\infty$.
     Henceforth, this convergence will simply be called \emph{uniform in probability}.
     \item Given $a^n\in\calA^n$, the sequence $\{\varphi^n_{a^n,\beta^n}(\cdot)\}$ is said to be uniformly (in $\beta^n\in\mathbb{B}^n$) $\calC$-tight if for any
$t_0>0$ and $\eps>0$,
     \begin{align*}
         &\lim_{x\to\infty}\limsup_{n\to\infty}\sup_{\beta^n\in\mathbb{B}^n}\PP\big(\norm{\varphi^n_{a^n,\beta^n}}_{t_0}>x\big)=0,\\
         &\lim_{\delta\to 0}\limsup_{n\to\infty}\sup_{\beta^n\in\mathbb{B}^n}\prob{w_{t_0}\brac{\varphi^n_{\beta^n},\delta}>\eps}=0.
     \end{align*}
\end{itemize}
\end{definition}

We now state several lemmas and provide the proof of
Theorem \ref{thm:cmu is opt} based on them. In \S\ref{sec:lems},
these lemmas are proved.

\subsection{Proof of Theorem \ref{thm:cmu is opt}}
\label{sec41}

The first states that the total workload process does not depend on the
strategy.

\begin{lemma}\label{lem:W,beta ind}
For any $n\in\N$, $a^n\in\calA^n$, $\beta^n,\tilde\beta^n\in\B^n$, if $W^{\tot,n}$ and $\tilde W^{\tot,n}$
are the processes corresponding to $(a^n,\beta^n)$ and $(a^n,\tilde\beta^n)$, respectively, then $W^{\tot,n}=\tilde W^{\tot,n}$.
\end{lemma}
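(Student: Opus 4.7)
The plan is to read off the claim directly from the work conservation condition \eqref{24} and the identity \eqref{eq:WR=Gamma}, both of which were already established. The key observation is that the total arrival process $A^{\tot,n}$ is a function of the adversary's control $a^n$ alone: by \eqref{eq:Al}, $A^{\ell n}$ depends only on the sequence $\{J^{\ell n}_k\}$, which is part of $a^n$ and carries no dependence on $\beta^n$.

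First I would note that for any admissible strategy $\beta^n\in\B^n$, the work conservation requirement \eqref{24} forces
\[
B^{\tot,n} = \id - \Gam_2(n^{-1}A^{\tot,n} - \id).
\]
Since the right-hand side depends only on $A^{\tot,n}$ (hence only on $a^n$), both $\beta^n$ and $\tilde\beta^n$ must produce the same total cumulative effort $B^{\tot,n}$. The admissibility requirement is what pins the sum down even though the individual components $B^{\ell n}$ may differ.

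Then I would conclude via \eqref{eq:Wl} summed over $\ell$, or equivalently via \eqref{eq:WR=Gamma}:
\[
(W^{\tot,n},R^n) = \Gam(A^{\tot,n} - n\,\id),
\]
so the pair $(W^{\tot,n},R^n)$ is a deterministic function of $a^n$ and is therefore identical under the two strategies. This gives $W^{\tot,n} = \tilde W^{\tot,n}$.

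There is no real obstacle here: the statement is a direct structural consequence of Skorohod's lemma applied at the aggregate level together with the definition of admissibility. The only thing to double-check is that \eqref{24} indeed follows from the admissibility conditions \eqref{23}--\eqref{25} without any hidden dependence on $\beta^n$ beyond the arrivals, which is already built into the definition of $\uu{\B}$ and was spelled out in the derivation preceding \eqref{eq:WR=Gamma}. The content of the lemma is really that at the workload level the SC's freedom is entirely spent, and only the per-type split among the $B^{\ell n}$'s remains available.
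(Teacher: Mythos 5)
Your proof is correct and follows the same route as the paper's one-line proof: observe that $A^{\tot,n}$ depends only on $a^n$, and then read off from the work-conservation identity $(W^{\tot,n},R^n)=\Gam(A^{\tot,n}-n\,\id)$ that $W^{\tot,n}$ is a deterministic function of $a^n$ alone. You have simply spelled out the reasoning that the paper labels ``immediate.''
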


\begin{proof}
Immediate from \eqref{eq:WR=Gamma} and the definition of $A^{\tot,n}$.
\end{proof}

Next is a polynomial in time estimate on the second moment of the
workload and queue length.
\begin{lemma}\label{lem:W,Q UI}
There exists a constant $c<\infty$ such that for all $n\in\N$, $t\ge0$, $a^n\in\calA^n$ and $\beta^n\in\B^n$,
\begin{align*}
\E\big[\|\hat{W}^{\tot,n}\|_t^2]\leq c(t+1)^2,
    &&\E\big[\|\hat{Q}^{\tot,n}\|_t^2\big]\leq c(t+1)^2.
\end{align*}
\end{lemma}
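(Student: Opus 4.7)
The plan is to prove the two bounds in sequence, using the workload estimate as input to the queue-length estimate via a deterministic per-type identity. For the workload bound, Lemma \ref{lem:W,beta ind} already provides uniformity in $\beta^n$, so any $\beta^n$ may be fixed. From \eqref{eq:WR=Gamma} together with the elementary bound $\Gam_1(\psi)(t)\le\sup_{0\le s\le t}(\psi(t)-\psi(s))$, valid when $\psi(0)=0$, I obtain $W^{\tot,n}_t\le\sup_{0\le s\le t}[A^{\tot,n}_t-A^{\tot,n}_s-n(t-s)]$. I then split $A^{\tot,n}-n\,\id$ into a martingale plus a small drift: $M^n_k:=\sum_\ell\sum_{j=1}^k(J^{\ell n}_j-\bxi^{\ell n}_j)$ is an $\{\calF^n_k\}$-martingale, and Assumptions \ref{ass:Pi}(\ref{it:K,b}) and \ref{assn:ht} give $\sum_\ell\bxi^{\ell n}_j=1+n^{-1/2}\sum_\ell\bb^{\ell n}_j$ with $|\bb^{\ell n}_j|\le c$ uniformly for $n$ large. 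The per-step conditional variance of $M^n$ equals $2\sum_\ell\bq^{\ell n}_j$, which is uniformly bounded by Assumption \ref{ass:Pi}(\ref{it:4th moment}). Doob's $L^2$ inequality then yields $\E[\sup_{k\le\lfloor nt\rfloor}(M^n_k)^2]\le cnt$, and dividing by $n$ produces the desired $c(t+1)^2$.

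For the queue-length bound, the key ingredient is the per-type identity
\[
\frac{Q^{\ell n}_t}{\mu^\ell}=W^{\ell n}_t+U^{\ell n}_t+E^{\ell n}_t,
\qquad E^{\ell n}_t:=\sum_{j=D^{\ell n}_t+1}^{\lfloor nt\rfloor}\Bigl(\frac{1}{\mu^\ell}-J^{\ell n}_j\Bigr),
\]
where $U^{\ell n}_t:=nB^{\ell n}_t-A^{\ell n}_{D^{\ell n}_t/n}\in[0,J^{\ell n}_{D^{\ell n}_t+1}]$ is the amount of work already performed on the currently-in-service $\ell$-type job. This follows by summing the sizes of the $Q^{\ell n}_t=\lfloor nt\rfloor-D^{\ell n}_t$ jobs still in buffer $\ell$, giving $\sum_{j=D^{\ell n}_t+1}^{\lfloor nt\rfloor}J^{\ell n}_j=W^{\ell n}_t+U^{\ell n}_t$, and rearranging. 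Hence $Q^{\ell n}_t\le\mu^\ell(W^{\ell n}_t+J^{\ell n}_{D^{\ell n}_t+1}+|E^{\ell n}_t|)$, which reduces the problem to three $L^2$ bounds: the workload piece, controlled by $\hat W^{\ell n}\le\hat W^{\tot,n}$ and the first part; the in-service piece, dominated by $\sup_{k\le\lfloor nt\rfloor+1}J^{\ell n}_k/\sqrt n$ whose squared $L^2$-norm is $O(t+1)$ via the uniform second-moment bound; and the centered-sum piece, controlled via $\sup_{s\le t}|E^{\ell n}_s|\le 2\sup_{k\le\lfloor nt\rfloor}|\tilde E^{\ell n}_k|$ with $\tilde E^{\ell n}_k:=\sum_{j=1}^k(1/\mu^\ell-J^{\ell n}_j)$, to which the same Doob-plus-drift split as in the workload argument applies, giving $\E[\sup_k(\tilde E^{\ell n}_k)^2]/n\le c(t+t^2)$. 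Summing over $\ell$ completes the bound.

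The main obstacle is the setup of the queue-length estimate. The trivial bound $Q^{\tot,n}_t\le L\lfloor nt\rfloor$ only yields the useless $\hat Q^{\tot,n}_t=O(\sqrt n\,t)$, and since zero-size jobs are admissible in $\PI^{\ell n}$, $Q^{\ell n}$ cannot be controlled by $W^{\ell n}$ alone. The identity above is the trick: it converts the count of jobs into a workload term plus a centered sum of $O(n)$ job sizes whose fluctuations are of diffusive order $\sqrt n$. Uniformity in $\beta^n$ is preserved throughout, since the only $\beta^n$-dependent quantity entering the bounds (beyond $W^{\tot,n}$, which is strategy-independent) is the departure index $D^{\ell n}_t$, for which only the deterministic estimate $D^{\ell n}_t\le\lfloor nt\rfloor$ is used.
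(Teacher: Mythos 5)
Your proof is correct and follows essentially the same route as the paper's: the workload bound via the Skorohod map together with a martingale/drift split and Doob's inequality (which is exactly the content of Lemma \ref{lem: X is UI} and \eqref{eq:bnd on Y}), and the queue-length bound via the identity $Q^{\ell n}_t/\mu^\ell=W^{\ell n}_t+\sum_{j=D^{\ell n}_t+1}^{\lfloor nt\rfloor}(1/\mu^\ell-J^{\ell n}_j)+U^{\ell n}_t$, which is the paper's \eqref{eq:b2} with the increments of $\hat X^{\ell n},\hat Y^{\ell n}$ written out and estimated by the same arguments. Your explicit handling of the in-service term $U^{\ell n}_t$ is slightly more careful than the paper's displayed identity (which omits it) and is harmless.
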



The following result is a version of the RSP, by which the queue length and
workload processes are asymptotically proportional to one another.

\begin{lemma}\label{lem:Q-W to 0}
Given any $a^n\in\calA^n$,
for all $t_0>0$ and $\ell\in[L]$,
$\|\hat{Q}^{\ell n}-\mu^\ell\hat{W}^{\ell n}\|_{t_0}\to0$ uniformly in probability as $n\to\infty$.
\end{lemma}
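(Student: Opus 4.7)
The plan is to reduce the workload-queue discrepancy to a martingale oscillation over a random window of length at most $O(\sqrt n)$. Setting $\tilde W^{\ell n}_t := \sum_{k = D^{\ell n}_t + 1}^{\lfloor nt \rfloor} J^{\ell n}_k$ (the total size of type-$\ell$ jobs present in the buffer at time $t$), the difference between $W^{\ell n}_t$ and $\tilde W^{\ell n}_t$ amounts to the partially-served job in service and is thus at most $\max_{k\le \lfloor nt_0\rfloor} J^{\ell n}_k$. A Markov/union bound applied to the fourth moment bound in Assumption \ref{ass:Pi}(\ref{it:4th moment}) shows that $n^{-1/2}\max_k J^{\ell n}_k \to 0$ in probability, uniformly in $\beta^n$ (and in $a^n$ as long as $a^n\in\calA^n$). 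Since $Q^{\ell n}_t = \lfloor nt\rfloor - D^{\ell n}_t$, it then suffices to prove that
\begin{align*}
n^{-1/2}\sum_{k=D^{\ell n}_t+1}^{\lfloor nt\rfloor}\big(J^{\ell n}_k-1/\mu^\ell\big)\to 0\quad\text{uniformly in $\beta^n$.}
\end{align*}

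Decompose each summand as $(J^{\ell n}_k-\bxi^{\ell n}_k)+n^{-1/2}\bb^{\ell n}_k$. The drift part is easy: since $K^{\ell n}\to K^\ell$ with $K^\ell$ compact, $|\bb^{\ell n}_k|\le c$ uniformly for large $n$, so the corresponding sum is bounded by $c n^{-1/2}Q^{\ell n}_t$; after the outer $n^{-1/2}$ scaling it becomes $c n^{-1/2}\hat Q^{\ell n}_t$, which vanishes uniformly in probability by Lemma \ref{lem:W,Q UI}. The martingale part requires more care: set $M^n_m := \sum_{k=1}^m(J^{\ell n}_k-\bxi^{\ell n}_k)$, which is an $\{\calF^n_m\}$-martingale because $\pi^{\ell n}_k$ is $\calF^n_{k-1}$-measurable.

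The main obstacle is controlling $\sup_{t\le t_0}|M^n_{\lfloor nt\rfloor}-M^n_{D^{\ell n}_t}|$ uniformly in $\beta^n$, since the range of summation depends on the SC's strategy through $D^{\ell n}_t$. The key idea is to first localize on $\{\|Q^{\ell n}\|_{t_0}\le K\sqrt n\}$, which by Lemma \ref{lem:W,Q UI} has probability at least $1-c(t_0+1)^2/K^2$ uniformly. On this event every increment involves at most $N:=\lceil K\sqrt n\rceil$ terms. Partitioning $\{0,1,\ldots,\lfloor nt_0\rfloor\}$ into consecutive blocks of length $N$, any $|M^n_{m_2}-M^n_{m_1}|$ with $m_2-m_1\le N$ is bounded (up to a factor $2$) by $\sup_m|M^n_m-M^n_{jN}|$ over two consecutive blocks for some $j$.

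Applying Doob's $L^4$ maximal inequality together with the Burkholder-type estimate $\E[(M^n_{(j+2)N}-M^n_{jN})^4]\le cN^2$ (which uses the uniform fourth moment from Assumption \ref{ass:Pi}(\ref{it:4th moment})), Markov's inequality at threshold $\eps\sqrt n$, and a union bound over the $O(n/N)$ blocks gives an upper bound of order $N/(\eps^4 n)+N^2/(\eps^4n^2)=O(K/(\eps^4\sqrt n))$. Letting $n\to\infty$ and then $K\to\infty$ delivers the required convergence, uniform in $\beta^n\in\B^n$. Using $L^4$ rather than $L^2$ here is essential, since the second-moment Doob bound yields an $O(1)$ union-bound term that cannot be absorbed; this is exactly where the fourth-moment hypothesis enters the argument.
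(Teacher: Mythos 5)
Your proof is correct, and all the ingredients you need (the second-moment bound of Lemma \ref{lem:W,Q UI}, the fourth-moment hypothesis in Assumption \ref{ass:Pi}, the martingale property of the centered job-size sums) are available at this point of the paper without circularity. The skeleton is the same as the paper's: the discrepancy $\hat Q^{\ell n}-\mu^\ell\hat W^{\ell n}$ is the centered-sum martingale increment over the random index window $(D^{\ell n}_t,\lfloor nt\rfloor]$, plus a drift sum and the residual of the in-service job, and the window length is $Q^{\ell n}_t$. Where you diverge is in how the martingale oscillation over that window is controlled. The paper bounds it by $w_{t_0}\bigl(\hat X^{\ell n}+\hat Y^{\ell n},\,\|Q^n\|_{t_0}/n\bigr)$ and then invokes the uniform $\calC$-tightness of Lemma \ref{lem:X tight} together with $n^{-1}\|Q^n\|_{t_0}\to 0$, i.e.\ a two-parameter limit ($n\to\infty$, then $\delta\to 0$) that reuses an already-proved modulus-of-continuity estimate; you instead localize on $\{\|Q^{\ell n}\|_{t_0}\le K\sqrt n\}$ and redo the maximal inequality directly, with blocks of length $N=\lceil K\sqrt n\rceil$ matched to the window, Doob's $L^4$ inequality plus the Burkholder-type bound $\E[(M^n_{(j+2)N}-M^n_{jN})^4]\le cN^2$, and a union bound over $O(\sqrt n/K)$ blocks. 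Your version buys an explicit rate $O\bigl(K/(\eps^4\sqrt n)\bigr)+O(K^{-2})$ and is self-contained (it does not need Lemma \ref{lem:X tight}, exploiting that the window is $O(n^{-1/2})$ in time rather than merely $o(1)$); the paper's version is shorter here because the same Doob/Burkholder block computation is packaged once in the tightness lemma and reused. Your closing observation that $L^2$ estimates would leave an $O(1)$ union-bound term, so the fourth moment is genuinely needed, matches where Assumption \ref{ass:Pi}(\ref{it:4th moment}) enters the paper's own tightness proof. Two cosmetic points: uniformity is only required over $\beta^n\in\B^n$ for a fixed sequence $a^n$, so your parenthetical uniformity in $a^n$ is unnecessary (though true); and when citing Lemma \ref{lem:W,Q UI} for a single type $\ell$, note that $\hat Q^{\ell n}\le\hat Q^{\tot,n}$ componentwise, so the stated bound for the total process suffices.
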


Whereas the above three lemmas are concerned with general
behavior of the SC, our next result is about a certain property
of the specific strategy $\beta^{*n}$.
It states that when the server operates under the fixed nonpreemptive priority rule, the workload is asymptotically concentrated on the lowest-priority type.

\begin{lemma}\label{lem:W To 0}
Assume that the SC applies the strategy $\beta^{*n}$.
Then for any $a^n\in\calA^n$, $\sum_{\ell=2}^L\hat{W}^{\ell n}\to0$ in probability as $n\to\infty$.
\end{lemma}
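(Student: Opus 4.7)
Let $H := \{2, \ldots, L\}$ and $W^{H,n}_t := \sum_{\ell\in H} W^{\ell n}_t$, so that the claim reduces to $\sup_{t\le t_0} n^{-1/2} W^{H,n}_t \to 0$ in probability for every $t_0 > 0$. The key structural fact is that under $\beta^{*n}$ the types in $H$ receive (nonpreemptive) strict priority over type~$1$, while by Assumption \ref{assn:ht} their aggregate offered load $\rho^H := \sum_{\ell \in H} 1/\mu^\ell = 1 - 1/\mu^1$ is strictly less than $1$. Hence $W^{H,n}$ should behave like the workload of a subcritical D/G/1 queue, which is of order $O(1)$ rather than $O(\sqrt{n})$.

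\emph{Step 1: Pathwise Lindley-type bound.} For $t \in (0, t_0]$, let $\tau = \sup\{s \le t : W^{H,n}_{s-} = 0\}$ be the start of the current $H$-busy period containing $t$ (with $\tau=t$ if $W^{H,n}_t=0$). During $(\tau, t]$ the server processes $H$-jobs except possibly for one residual type-$1$ service of duration $\le n^{-1}\max_{k\le nt_0}J^{1n}_k$ at the start of the busy period (from a type-$1$ job in progress at $\tau-$). Work conservation gives
\[
W^{H,n}_t \le \sup_{0 \le s \le t}\bigl[A^{H,n}_t - A^{H,n}_{s-} - n(t - s)\bigr]^+ + \max_{k \le nt_0} J^{1n}_k.
\]
Since $A^{H,n}$ only jumps at times $k/n$, the supremum is, up to an $O(1)$ single-jump correction, equal to $V^n_{\floor{nt}}$, where $V^n_k$ obeys the Lindley recursion $V^n_k = (V^n_{k-1} + Y^n_k)^+$ with $Y^n_k := \sum_{\ell \in H} J^{\ell n}_k - 1$ and $V^n_0 = 0$.

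\emph{Step 2: Uniform-in-adversary drift and moments of $Y^n_k$.} Given $\calF^n_{k-1}$, one has $\E[Y^n_k \mid \calF^n_{k-1}] = \rho^H - 1 + n^{-1/2}\sum_{\ell \in H} \bb^{\ell n}_k$. The Hausdorff convergence $K^{\ell n} \to K^\ell$ with $K^\ell$ compact (Assumption \ref{ass:Pi}(\ref{it:K,b})) ensures $\sup_{\pi\in\PI^{\ell n}}|b^{\pi\ell n}| \le C$ uniformly for all $n$ large, hence $|\bb^{\ell n}_k| \le C$ uniformly in $k$. Combined with $1 - \rho^H = 1/\mu^1 > 0$, this gives $\E[Y^n_k \mid \calF^n_{k-1}] \le -1/(2\mu^1)$ uniformly in the adversary's play, for all $n$ large enough. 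Moreover $\sup_{n,k}\E[|Y^n_k|^4 \mid \calF^n_{k-1}] < \infty$ by Assumption \ref{ass:Pi}(\ref{it:4th moment}).

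\emph{Step 3: Moment bound and conclusion.} A Foster–Lyapunov argument with $\phi(v) = v^3$ yields $\E[\phi((v + Y^n_k)^+) - \phi(v) \mid \calF^n_{k-1}] \le -c\,v^2$ for $v \ge v_0$, with constants $c,v_0$ independent of $n$ and of the adversary, so that $\sup_{n,k}\E[(V^n_k)^3] \le C'$. Markov's inequality and a union bound then give
\[
\PP\bigl(\max_{k \le nt_0} V^n_k \ge \eps \sqrt{n}\bigr) \le \floor{nt_0}\cdot C'/(\eps^3 n^{3/2}) \to 0,
\]
and similarly $n^{-1/2}\max_{k\le nt_0} J^{1n}_k \to 0$ in probability via a 4th-moment Markov/union-bound argument. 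Substituting into Step~1 yields $\sup_{t\le t_0}\hat W^{H,n}_t \to 0$ in probability, proving the lemma. The main obstacle is the uniform-in-adversary Foster–Lyapunov verification in this step: the moment bound on $V^n_k$ must not deteriorate as the adversary tilts the arrival distributions, and this is exactly what compactness of $K^\ell$ and the $n^{-1/2}$ scale of the allowed drift perturbations provide, keeping the conditional drift of $Y^n_k$ bounded away from $0$ in the negative direction for all large $n$.
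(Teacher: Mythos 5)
Your overall strategy is correct, but it is genuinely different from the paper's. The paper works entirely at diffusion scale: writing $\sum_{\ell=2}^L\hat W^{\ell n}_t=\hat\psi^n_t+\sqrt n\,t\sum_{\ell\ge2}(\mu^\ell)^{-1}-n\sum_{\ell\ge2}\hat B^{\ell n}_t$ with $\hat\psi^n=\sum_{\ell\ge2}(\hat e^n/\mu^\ell+\hat X^{\ell n}+\hat Y^{\ell n})$, it picks the last up-crossing times $\theta<\tau$ of the high-priority workload from $\eps/2$ to $\eps$, uses the same structural fact you use (during $[\theta,\tau]$ the server gives types $\ge2$ all its effort except one residual type-1 job), and derives $\eps/2\le\hat\psi^n_\tau-\hat\psi^n_\theta-\sqrt n(\tau-\theta)/\mu^1+j^n_\theta/\sqrt n$, which is then ruled out via the uniform $\calC$-tightness of $\hat\psi^n$ (Lemma \ref{lem:X tight}): either $\tau-\theta\ge n^{-1/4}$, contradicting boundedness of $\hat\psi^n$, or $\tau-\theta<n^{-1/4}$, contradicting its modulus of continuity. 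You instead stay at the unscaled level: the high-priority workload is dominated by the waiting-time of a strictly subcritical Lindley recursion (drift $\le -1/\mu^1+O(n^{-1/2})$, uniformly in the adversary by compactness of the limiting $K^\ell$, exactly as the paper also notes for $\bb^{\ell n}$) plus one residual type-1 job, and you conclude by uniform moment bounds and a union bound over the $O(n)$ arrival epochs. Your route buys a quantitative $O(1)$ (in fact $O_{L^3}(1)$) bound on the unscaled high-priority workload, stronger than the paper's purely qualitative statement, at the price of a Lyapunov/moment argument; the paper's route reuses machinery (Lemma \ref{lem:X tight}) already needed elsewhere and avoids any drift analysis.

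The one step you state too quickly is the passage from the one-step drift inequality $\E[\phi((v+Y^n_k)^+)-\phi(v)\mid\calF^n_{k-1}]\le -c\,v^2$ for $v\ge v_0$, with $\phi(v)=v^3$, to $\sup_{n,k}\E[(V^n_k)^3]\le C'$. This implication is true but not obtained by naive telescoping: summing the inequality only controls $\E[(V^n_{k-1})^2\indicator{V^n_{k-1}\ge v_0}]$, which does not dominate $\E[(V^n_k)^3]$ without a further argument (a distribution can have large third moment and small truncated second moment). You should either invoke a result of Pemantle--Rosenthal type (uniformly negative conditional drift above a level plus uniformly bounded conditional $p$-th moments of increments, here $p=4$, imply uniformly bounded moments of order up to $p-1$), or, more simply, run the same computation with $\phi(v)=v^4$: since $((v+Y)^+)^4\le(v+Y)^4$, one gets $\E[\Delta\phi\mid\calF^n_{k-1}]\le -c\,v^3+C(1+v^2)$, and telescoping yields the Ces\`aro bound $\sum_{k\le nt_0}\E[(V^n_k)^3]\le C\,nt_0$, which is all your union bound needs, since then $\PP(\max_{k\le nt_0}V^n_k\ge\eps\sqrt n)\le C t_0\,\eps^{-3}n^{-1/2}\to0$. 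With that repair (and the routine fourth-moment/union-bound treatment of the single-jump corrections and of $n^{-1/2}\max_k J^{1n}_k$, which you already indicate), your proof is complete.
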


We are now ready to prove Theorem \ref{thm:cmu is opt}.

\begin{proof}[Proof of Theorem \ref{thm:cmu is opt}]
It suffices to prove that $\liminf_n\Del^n\ge0$ where
\[
\Del^n=\inf_{\beta\in\B^n}C^n(\beta^n(a^n),a^n)-C^n(\beta^{*n}(a^n),a^n).
\]
To this end, we observe that for any strategy $\beta^n\in\mathbb{B}^n$ and $t_0>0$, 
\begin{align}\label{eq:C=sum1}
    C^n(\beta^n(a^n),a^n)&=\E\Big[\int_0^\infty e^{-t}h\cdot\hat{Q}^n_tdt\Big]\nonumber\\
    &=\E\Big[\int_0^{t_0} e^{-t}\sum_{\ell=1}^Lh^\ell(\hat{Q}^{\ell n}_t-\mu^\ell\hat{W}^{\ell n}_t+\mu^\ell\hat{W}^{\ell n}_t)dt\Big]
    +\E\Big[\int_{t_0}^\infty e^{-t}h\cdot\hat{Q}^n_tdt\Big]\\
    &\geq\E\Big[\int_0^{t_0} e^{-t}\sum_{\ell=1}^Lh^\ell(\hat{Q}^{\ell n}_t-\mu^\ell\hat{W}^{\ell n}_t)dt\Big]
    +\E\Big[\int_0^{t_0} e^{-t}\hat{W}^{\tot,n}_tdt\Big]\nonumber
\end{align}
where the last inequality is due to the nonnegativity of $\hat W^{\ell n}_t$
and the fact $h^\ell\mu^\ell\ge h^1\mu^1=1$.
Upon denoting
\begin{align*}
\eps^{t_0}(a^n,\beta^n)=\E\Big[\sum_{\ell=1}^Lh^\ell\big\|\hat{Q}^{\ell n}-\mu^\ell\hat{W}^{\ell n}\big\|_{t_0}\Big],
\qquad
\bar\eps^{t_0}(a^n)=\sup_{\beta^n\in\B^n}\eps^{t_0}(a^n,\beta^n),
\end{align*}
and recalling that by Lemma \ref{lem:W,beta ind},
$\hat{W}^{\tot,n}$ does not depend on the strategy, we have
\begin{align}\label{eq:infC geq}
    \inf_{\beta^n\in\B^n}C^n(\beta^n(a^n),a^n)\geq\E\Big[\int_0^{t_0} e^{-t}\hat{W}^{\tot,n}_tdt\Big]-\bar\eps^{t_0}(a^n).
\end{align}

We now assume that the SC applies the strategy $\beta^{*n}$.
Using the inequality
\[
    \sum_{\ell=1}^Lh^\ell\mu^\ell\hat{W}^{\ell n}_t\le\hat{W}^{1 n}_t+c\sum_{\ell=2}^L\hat{W}^{\ell n}_t
    \le \hat{W}^{\tot,n}_t+c\sum_{\ell=2}^L\hat{W}^{\ell n}_t
\]
in \eqref{eq:C=sum1}, we obtain
$C^n(\beta^{*n}(a^n),a^n)\leq\E[\int_0^{t_0}e^{-t}\hat{W}^{\tot,n}_tdt]+\tilde{\eps}^{t_0}(a^n)$,
where
\begin{align*}
    &\tilde{\eps}_1^{t_0}(a^n)=c\E\Big[\sum_{\ell=1}^L\big\|\hat{Q}^{\ell n}-\mu^\ell\hat{W}^{\ell n}\big\|_{t_0}\Big],
    &&\tilde{\eps}_2^{t_0}(a^n)=c\E\Big[\sum_{\ell=1}^L\int_{t_0}^\infty e^{-t}\|\hat{Q}^{\ell n}\|_{t}dt\Big],\\
    &\tilde{\eps}_3^{t_0}(a^n)=c\E\Big[\Big\|\sum_{\ell=2}^L\hat{W}^{\ell n}\Big\|_{t_0}\Big],
    &&\tilde{\eps}^{t_0}(a^n)=\sum_{k=1}^3\tilde\eps_k^{t_0}(a^n).
\end{align*}
Again using the fact that $\hat{W}^{\tot,n}$ is independent of $\beta^n$,
combining with \eqref{eq:infC geq} we obtain
\begin{align*}
\Del^n\geq-\tilde{\eps}^{t_0}(a^n)-\bar\eps^{t_0}(a^n).
\end{align*}
In view of Lemma \ref{lem:W,Q UI}, the
uniform (in $\beta^n$) convergence stated in Lemma \ref{lem:Q-W to 0}
shows that $\lim_n\bar\eps^{t_0}(a^n)=0$ for every $t_0$.
For the same reason, $\lim_n\tilde\eps^{t_0}_1(a^n)=0$.
Similarly, by Lemmas \ref{lem:W,Q UI} and \ref{lem:W To 0},
$\lim_n\tilde\eps^{t_0}_3(a^n)=0$. This shows that
\[
\liminf_n\Del^n\ge-\limsup_n\tilde\eps^{t_0}_2(a^n)
\ge-c\int_{t_0}^\iy e^{-t}(t+1)dt,
\]
where Lemma \ref{lem:W,Q UI} is used again. The result follows
on taking $t_0\to\iy$.
\end{proof}

\subsection{Proof of the Lemmas}\label{sec:lems}

The proofs require some preparation.
Denote the centered version of job size by $\bar{J}^{\ell n}_k=J_k^{\ell n}-\bxi_k^{\ell n}$. Recalling the definition of $\bb^{\ell n}$
from Assumption \ref{ass:Pi}.\ref{it:K,b} and \eqref{33}, we can write
\begin{align}\label{32}
    W^{\ell n}_t=\sum_{k=1}^{nt}\bar{J}_k^{\ell n}+\sum_{k=1}^{nt}\frac{1}{\mu^{\ell n}}+\sum_{k=1}^{nt}\frac{\bb^{\ell b}_k}{\sqrt{n}}-nB^{\ell n}_{t}=X_t^{\ell n}+Y_t^{\ell n}+\floornt\frac{1}{\mu^\ell}-nB^{\ell n}_{t},
\end{align}
where we defined
\begin{align}\label{32+}
  &X_t^{\ell n}=\sum_{k=1}^{nt}\bar{J}_k^{\ell n},
  &Y_t^{\ell n}=\sum_{k=1}^{nt}\frac{\bb^{\ell n}_k}{\sqrt{n}}.
\end{align}
By compactness of $K^\ell$ and the convergence $K^{\ell n}\to K^\ell$
in $d_{\rm H}$, the sets $K^{\ell n}$ are uniformly bounded.
Hence so are the processes $\bb^{\ell n}, \bq^{\ell n}$.
Recalling that by our convention $\hat Y^{\ell n}=n^{-1/2}Y^{\ell n}$, we have
\begin{align}\label{eq:bnd on Y}
    \|\hat Y^{\ell n}\|_{t}\leq ct\qquad {\rm and}\qquad w_t(\hat Y^{\ell n},\del)\leq c(\del+n^{-1}),
    \qquad 0\le \del\le t.
\end{align}
Second, with $e_t^n=\floornt-nt$, by \eqref{eq:WR=Gamma} we write $W^{\tot,n}_t=X_t^{\tot,n}+Y_t^{\tot,n}+e_t^n+ R_t^{\tot,n}$ and obtain
\begin{align}\label{eq:W tot}
    (\hat{W}^{\tot,n},\hat{R}^n)=\Gam(\hat{X}^{\tot,n}+\hat{Y}^{\tot,n}+\hat{e}^n).
\end{align}

\begin{lemma}\label{lem: X is UI}
There exists a constant $c$ such that for all $n\in\N$, $\ell\in[L]$, $a^n\in\calA^n$ and $\beta^n\in\B^n$,
\begin{enumerate}
\item The processes $\hat{X}^{\ell n}$ are martingales with respect to $\{\calG_t^n\}$, of quadratic covariation
\begin{align*}
    [\hat{X}^{\ell n},\hat{X}^{\ell' n}]_t=\frac{1}{n}\sum_{k=1}^{nt}\bar{J}_k^{\ell n}\bar{J}_k^{\ell' n},
\end{align*}
\item $\E\big[\|\hat{X}^{\ell n}\|_t^2\big]\leq  ct$ and $\E\big[[\hat X^{\ell n}]_t\big]\le ct$.
\end{enumerate}
 
\end{lemma}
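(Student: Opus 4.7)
The strategy is to exploit the conditional structure of the adversary's moves: at step $k$, the measures $\pi^{\ell n}_k$ are $\calF^n_{k-1}$-measurable by \eqref{20}, and then $J^{\ell n}_k = (J^{\ell n}_k)_\ell$ is drawn independently across types from the product $\otimes_\ell \pi^{\ell n}_k$ by \eqref{22}. Hence
\[
\E[\bar J^{\ell n}_k\mid\calF^n_{k-1}]=\xi^{\pi^{\ell n}_k}-\bxi^{\ell n}_k=0,
\]
and for $\ell\neq \ell'$,
\[
\E[\bar J^{\ell n}_k\bar J^{\ell' n}_k\mid\calF^n_{k-1}]
=\E[\bar J^{\ell n}_k\mid\calF^n_{k-1}]\,\E[\bar J^{\ell' n}_k\mid\calF^n_{k-1}]=0,
\]
while $\E[(\bar J^{\ell n}_k)^2\mid\calF^n_{k-1}]=(\sig^{\pi^{\ell n}_k})^2=2\bq^{\ell n}_k$.

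From this I would deduce the martingale property. Because $\hat X^{\ell n}_t$ is piecewise constant on intervals $[k/n,(k+1)/n)$ and $\calG^n_t=\calF^n_{\lfloor nt\rfloor}$ by \eqref{30}, it suffices to check that the discrete-time process $k\mapsto \hat X^{\ell n}_{k/n}$ is a $\{\calF^n_k\}$-martingale; this is immediate from the identity $\E[\bar J^{\ell n}_k\mid\calF^n_{k-1}]=0$. For the quadratic covariation, $\hat X^{\ell n}$ has finite variation (a pure jump process), so the standard identity $[X,Y]_t=\sum_{s\le t}\Del X_s\Del Y_s$ applies; the jumps occur at the times $k/n$ with sizes $n^{-1/2}\bar J^{\ell n}_k$, which yields
\[
[\hat X^{\ell n},\hat X^{\ell' n}]_t=\sum_{k=1}^{\lfloor nt\rfloor}n^{-1}\bar J^{\ell n}_k\bar J^{\ell' n}_k,
\]
the stated formula.

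For part 2, first note that by Assumption \ref{ass:Pi}\ref{it:K,b} and the convergence $K^{\ell n}\to K^\ell$ in $d_{\rm H}$, the sets $K^{\ell n}$ are uniformly bounded, so there is a constant $c$ with $\bq^{\ell n}_k\le c$ for all $k,\ell,n$. Taking expectations in the $\ell=\ell'$ case of the formula above,
\[
\E\big[[\hat X^{\ell n}]_t\big]=\frac1n\sum_{k=1}^{\lfloor nt\rfloor}\E[(\bar J^{\ell n}_k)^2]
=\frac1n\sum_{k=1}^{\lfloor nt\rfloor}\E[2\bq^{\ell n}_k]\le 2c\,t,
\]
which gives the second bound. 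For the first bound, since $\hat X^{\ell n}$ is a martingale starting at $0$, I would combine the martingale $L^2$-isometry $\E[(\hat X^{\ell n}_t)^2]=\E[[\hat X^{\ell n}]_t]$ with Doob's $L^2$ maximal inequality to get $\E[\|\hat X^{\ell n}\|_t^2]\le 4\E[[\hat X^{\ell n}]_t]\le 8ct$, absorbing constants into a single $c$.

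There is no real obstacle here beyond keeping the measurability bookkeeping straight: one must use \eqref{20} and \eqref{22} carefully to ensure the conditional expectations collapse as stated, and one must note that the uniformity in $a^n\in\calA^n$ and $\beta^n\in\B^n$ is automatic, since the bound on $\bq^{\ell n}_k$ from Assumption \ref{ass:Pi} depends only on $\PI^{\ell n}$, not on the strategic choices.
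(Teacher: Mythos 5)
Your proposal is correct and follows essentially the same route as the paper: mean-zero conditional increments give the martingale property, the jump formula gives the quadratic covariation, the uniform bound on $K^{\ell n}$ bounds $\E[(\bar J^{\ell n}_k)^2]$, and Doob's inequality yields the maximal bound. The only cosmetic difference is that you invoke the $L^2$ isometry $\E[(\hat X^{\ell n}_t)^2]=\E[[\hat X^{\ell n}]_t]$, whereas the paper expands the square and kills the cross terms by conditioning — the same computation in disguise.
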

\begin{proof}
As the index $\ell$ does not play any role in the proof, it is fixed and omitted to simplify notation.
Note that $\{\hat{X}^n_t\}_{t\geq 0}$ is adapted to $\{\calG_t^n\}_{t\geq 0}$ by the definition of $\hat{X}^n$ and of $\calG^n$.
Consider $s<t$. If $\lfloor nt\rfloor=\lfloor ns\rfloor$ then the increment of
$\hat X^n$ is zero. If $\lt\lfloor nt\rt\rfloor=\lt\lfloor ns \rt\rfloor +1$ then
\begin{align*}
    \E[\hat{X}^n_t-\hat{X}^n_s|\calG_s^n]&=\frac{1}{\sqrt{n}}\E[\bar{J}^n_{\lt\lfloor nt\rt\rfloor}|\calF^n_{\lt\lfloor nt\rt\rfloor-1}]=0.
\end{align*}
For general $t>s$ the martingale property follows by the tower property.
The expression for the quadratic covariation follows directly from its definition.
This proves part 1.

To prove part 2, abbreviate $\hat X^{\ell n}_t$ to $\hat X_t$ and $\bar{J}^{\ell n}$
to $\bar{J}_k$ in the next display to get
\begin{align}\label{306}
    \E[\hat{X}_t^2]&=\frac{1}{n}\sum_{k=1}^{nt}\expect{\bar{J}_k^2}+\frac{2}{n}\sum_{1\le m<k\le nt}\expect{\bar{J}_k\bar{J}_m}
    \\
    &\leq ct+\frac{2}{n}\sum_{1\le m<k\le nt}\expect{\E\lt[\bar{J}_k\middle|\calF_{k-1}^n\rt]\bar{J}_m}= ct,
    \label{307}
\end{align}
where the boundedness of the second moment of $\bar{J}_k^n$ follows from the boundedness of $\bq_k^{\ell n}$.
The claim regarding $\E[\|\hat{X}\|_t^2]$ now follows from Doob's inequality. Next, $\E\big[[\hat X^{\ell n}]_t\big]$
is equal to the first term in \eqref{306}, and thus the required estimate follows from \eqref{307}.
\end{proof}

\begin{proof}[Proof of Lemma \ref{lem:W,Q UI}]
In this proof the superscript $\tot$ is omitted. Using \eqref{eq:W tot},
the explicit expression for $\Gam$ given in \eqref{e02}, and \eqref{eq:bnd on Y},
\begin{align}\label{eq:a1}
    \|\hat{W}^n\|_t\leq 2\big(\| \hat{X}^n\|_t+\| \hat{Y}^n\|_t\big)+1\leq2\| \hat{X}^n\|_t+c(t+1).
\end{align}
Hence by Lemma \ref{lem: X is UI}, $\E\big[\|\hat{W}^n\|_t^2\big]\leq c(t+1)^2$.

Next, by the definition of $D_t^n$ we have $W_t^{\ell n}=\sum_{k=D^{\ell n}_t+1}^{nt}J_k^{\ell n}$, where clearly $D^{\ell n}_t\le nt$.
By the definition of $\bb_k^{\ell n}$,
\begin{align*}
    \hat{W}_t^{\ell n}=\frac{1}{\sqrt{n}}\sum_{k=D_t^{\ell n}+1}^{nt}\brac{J^{\ell n}_k-\bxi_k^{\ell n}+\frac{\bb_k^{\ell n}}{\sqrt{n}}+\frac{1}{\mu^\ell}}=\hat{X}^{\ell n}_t+\hat{Y}_t^{\ell n}-\hat{X}^{\ell n}_{D_t^{\ell n}/n}-\hat{Y}^{\ell n}_{D_t^{\ell n}/n}+\frac{\hat{Q}_t^{\ell n}}{\mu^\ell}.
\end{align*}
Hence,
\begin{align}\label{eq:b2}
    \hat{Q}^{\ell n}_t=\mu^\ell(\hat{W}^{\ell n}_t-\hat{X}^{\ell n}_t-\hat{Y}_t^{\ell n}+\hat{X}^{\ell n}_{D_t^{\ell n}/n}+\hat{Y}^{\ell n}_{D_t^{\ell n}/n}).
\end{align}
By the bound just obtained on the second moment of $\hat W^n$
and the bounds used in the previous paragraph on $\hat X^n$ and $\hat Y^n$,
$\E\big[\|\hat{Q}^n\|_t^2\big]\leq c(t+1)^2$.
\end{proof}


\begin{lemma}\label{lem:X tight}
For any control $a^n\in\calA^n$, the sequence $\{\hat{X}^{\ell n},[\hat{X}^{\ell n}],\hat{Y}^{\ell n},\hat{W}^{\tot,n},\hat{R}^n\}_{n\in\N}$ is uniformly $\calC$-tight.
\end{lemma}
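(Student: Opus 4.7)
The plan starts with a structural observation. All the processes listed in the lemma are measurable functionals of the adversary's control $a^n$ alone: $\hat X^{\ell n}, [\hat X^{\ell n}], \hat Y^{\ell n}$ by their very definition (they depend only on $J^n$ and $\pi^n$), and $\hat W^{\tot,n}, \hat R^n$ by Lemma \ref{lem:W,beta ind} together with the work-conservation relation \eqref{eq:WR=Gamma} (which shows that $(\hat W^{\tot,n}, \hat R^n) = \Gam(\hat A^{\tot,n} - \sqrt n \id)$ depends on $\beta^n$ only through the trivial fact that $\beta^n$ is absent). Consequently, the supremum over $\beta^n\in\B^n$ in the definition of uniform $\calC$-tightness is superfluous, and it suffices to establish ordinary $\calC$-tightness in $n$ for each fixed sequence of adversary controls.

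For $\hat Y^{\ell n}$, the bound \eqref{eq:bnd on Y} already gives uniform Lipschitz continuity of the sample paths with a deterministic Lipschitz constant, from which $\calC$-tightness is immediate. For $[\hat X^{\ell n}]$, I would decompose
\[
[\hat X^{\ell n}]_t = Z^{\ell n}_t + M^{\ell n}_t,
\qquad
Z^{\ell n}_t = \frac{1}{n}\sum_{k=1}^{\lfloor nt\rfloor}\E\bigl[(\bar J_k^{\ell n})^2\mid\calF^n_{k-1}\bigr] = \frac{2}{n}\sum_{k=1}^{\lfloor nt\rfloor}\bq^{\ell n}_k,
\]
where $M^{\ell n}$ is a martingale. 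Compactness of $K^\ell$ (and convergence $K^{\ell n}\to K^\ell$) makes $\bq^{\ell n}_k$ uniformly bounded, so $Z^{\ell n}$ is uniformly Lipschitz. A direct computation of $\E[(M^{\ell n}_{t_0})^2]$, using Assumption \ref{ass:Pi}(\ref{it:4th moment}) to bound $\E[(\bar J_k^{\ell n})^4]$, yields $\E[(M^{\ell n}_{t_0})^2]\le ct_0/n$, and Doob's inequality then gives $\|M^{\ell n}\|_{t_0}\to 0$ in $L^2$. Combining the Lipschitz behaviour of $Z^{\ell n}$ with the uniform convergence to $0$ of $M^{\ell n}$ delivers the $\calC$-tightness of $[\hat X^{\ell n}]$.

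For $\hat X^{\ell n}$ I would invoke the martingale central tightness criterion of Aldous/Rebolledo. Tightness of the one-dimensional marginals follows from Lemma \ref{lem: X is UI}. For the modulus, given stopping times $\tau_n\le t_0$ and $\delta_n\downarrow 0$, the optional sampling theorem together with the compensator estimate above gives
\[
\E\bigl[(\hat X^{\ell n}_{\tau_n+\delta_n} - \hat X^{\ell n}_{\tau_n})^2\bigr]
= \E\bigl[[\hat X^{\ell n}]_{\tau_n+\delta_n} - [\hat X^{\ell n}]_{\tau_n}\bigr]
\le c(\delta_n+n^{-1})\to 0,
\]
verifying the Aldous condition. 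To upgrade from tightness in $\calD$ to $\calC$-tightness, one needs the maximum jump of $\hat X^{\ell n}$ on $[0,t_0]$, namely $\max_{k\le nt_0} n^{-1/2}|\bar J_k^{\ell n}|$, to vanish in probability; a Markov-plus-union bound using the fourth moment bound from Assumption \ref{ass:Pi}(\ref{it:4th moment}) yields
$\PP(\max_{k\le nt_0}n^{-1/2}|\bar J_k^{\ell n}|>\eps)\le cnt_0/(n^2\eps^4)\to0$, as required.

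The joint $\calC$-tightness of $\hat W^{\tot,n}$ and $\hat R^n$ then follows from the representation \eqref{eq:W tot}, the bound $\|\hat e^n\|_{t_0}\le n^{-1/2}$, and the Lipschitz continuity of the Skorohod map $\Gam$ on $\calD_\R[0,\iy)$ with respect to the uniform topology, by the continuous mapping theorem applied to the already-established $\calC$-tight inputs. The main technical care is in the third paragraph: obtaining the $L^2$ decay of the martingale $M^{\ell n}$ (which is what forces the $\calC$-tightness of $[\hat X^{\ell n}]$, and in turn enables the Aldous estimate for $\hat X^{\ell n}$) and controlling the maximum jump of $\hat X^{\ell n}$, both of which hinge crucially on the fourth moment assumption.
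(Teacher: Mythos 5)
Your proof is correct, but its technical core runs along a different route than the paper's. The paper proves the modulus bound for $\hat X^{\ell n}$ directly and quantitatively: it partitions $[0,t_0]$ into $O(t_0/\delta)$ blocks, applies Burkholder's inequality to each block increment, and uses the fourth-moment bound of Assumption \ref{ass:Pi}(\ref{it:4th moment}) to get the explicit estimate $\sup_{\beta}\PP(w_{t_0}(\hat X^n,\delta)>\eps)\le \tfrac{3t_0}{\delta}\cdot\tfrac{c}{\eps^4}(\delta+n^{-1})^2$; the same block estimate \eqref{305} is then recycled (with a second-moment Chebyshev bound) to control $w_{t_0}([\hat X^n],\delta)$. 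You instead (i) decompose $[\hat X^{\ell n}]$ into its Lipschitz compensator plus an $L^2$-small martingale killed by Doob, and (ii) handle $\hat X^{\ell n}$ via the Aldous/Rebolledo criterion (the identity $\E[(\hat X_{\tau+\delta}-\hat X_\tau)^2]=\E[[\hat X]_{\tau+\delta}-[\hat X]_\tau]$ via optional sampling, plus the compensator bound) together with a maximum-jump estimate to upgrade $\calD$-tightness to $\calC$-tightness. Both arguments hinge on the same fourth-moment assumption, and your treatment of $\hat Y^{\ell n}$ via \eqref{eq:bnd on Y} and of $(\hat W^{\tot,n},\hat R^n)$ via \eqref{eq:W tot} and the Lipschitz property of $\Gam$ coincides with the paper. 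Your opening observation that all five processes are functionals of $a^n$ alone (by their definitions and Lemma \ref{lem:W,beta ind} together with \eqref{eq:WR=Gamma}), so that the supremum over $\beta^n$ in the definition of uniform $\calC$-tightness is vacuous, is a clean simplification; the paper achieves the same uniformity by noting that none of its constants depends on $\beta^n$. What your route buys is brevity and reliance on standard weak-convergence machinery (Aldous criterion, and the equivalence of $\calC$-tightness with tightness in $\calD$ plus vanishing maximal jumps); what the paper's route buys is a fully self-contained, explicitly quantitative bound on the uniform modulus, which matches the definition of uniform $\calC$-tightness verbatim without invoking those equivalences. The only place where you should be slightly more careful is the Aldous step: justify $\E[[\hat X]_{\tau_n+\delta_n}-[\hat X]_{\tau_n}]\le c(\delta_n+n^{-1})$ for \emph{stopping} times, e.g.\ by writing $[\hat X^{\ell n}]=Z^{\ell n}+M^{\ell n}$, bounding the compensator increment pathwise by $c(\delta_n+n^{-1})$, and using optional sampling (bounded stopping times, square-integrable martingale, $\E[[\hat X^{\ell n}]_{t}]<\iy$ by Lemma \ref{lem: X is UI}) to kill $\E[M^{\ell n}_{\tau_n+\delta_n}-M^{\ell n}_{\tau_n}]$; as stated this is implicit but not spelled out.
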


$\calC$-tightness is not stated, and in general does not hold,
for each component $\hat W^{\ell n}$.

\begin{proof}
It suffices to prove uniform $\calC$-tightness for each component.
We prove first for each $\ell\in[L]$ that the first three components of the tuple are uniformly $\calC$-tight, omitting the index $\ell$ in the proof.
By \eqref{eq:bnd on Y}, the process $\hat{Y}^n$ is bounded by the function $ct$, and $w_{t_0}(\hat{Y}^n,\delta)\leq c(\delta+n^{-1})$.
Because both bounds are uniform in $\beta^n\in\B^n$, uniform $\calC$-tightness follows.

Turning to $\hat{X}^n$, given $t_0$, the uniform bound needed on $\PP(\|\hat{X}^n\|_{t_0}>x)$ follows from Lemma \ref{lem: X is UI}.
Next, for any $\delta$ of the form $3t_0/N$, some $N\in\N$,
consider the $N=3t_0/\delta$ closed intervals $\Del t_i=[(i-1)\frac{\del}{3},i\frac{\del}{3}]$.
Then 
\begin{align*}
    \PP\Big(\sup_{t,s\leq t_0,|t-s|<\delta}|\hat{X}^n_t-\hat{X}^n_s|>\eps\Big)&\leq\PP\Big(\exists i\in\{1,...,N\}:\sup_{t,s\in\Del t_i}|\hat{X}^n_t-\hat{X}^n_s|>\tfrac{\eps}{3}\Big)\\
    &\leq\sum_{i=1}^N\PP\Big(\sup_{t,s\in\Del t_i}|\hat{X}^n_t-\hat{X}^n_s|>\tfrac{\eps}{3}\Big)\\
    &\leq\sum_{i=1}^N\frac{\E\big[\sup_{t,s\in\Del t_i}|\hat{X}^n_t-\hat{X}^n_s|^4\big]}{(\eps/3)^4}.
\end{align*}
Denoting $t_i=(i-1)\frac{\del}{3}$,
\begin{align*}
    \sup_{t,s\in\Del t_i}|\hat{X}^n_t-\hat{X}^n_s|^4\leq c\sup_{t\in\Del t_i}|\hat{X}^n_t-\hat{X}^n_{t_i}|^4.
\end{align*}
Since $\hat{X}^n_t$ is a martingale, Burkholder's inequality yields
\begin{align}\label{305}
    \E\Big[\sup_{t\in\Del t_i}|\hat{X}^n_t-\hat{X}^n_{t_i}|^4\Big]\leq c\E\big[\big([\hat{X}^n]_{t_{i+1}}-[\hat{X}^n]_{t_i}\big)^2\big]=\frac{c}{n^2}\sum_{k,m=\floor{nt_i}+1}^{nt_{i+1}}\E[(\bar{J}_k^n\bar{J}_m^n)^2]\leq \frac{c}{n^2}(n\delta+1)^2,
\end{align}
where the last ineqaulity combines Cauchy-Schwarz inequality, Assumption \ref{ass:Pi}.\ref{it:4th moment} and the fact that $|nt-nt_i|\leq n\delta$.
Note that $c$ does not depend on $\beta^n$.
Combining the above estimates gives
\begin{align*}
    \sup_{\beta\in\B^n}\PP(w_{t_0}(\hat{X}^n,\delta)>\eps)\leq\frac{3t_0}{\delta}\cdot\frac{c}{\eps^4}(\delta+n^{-1})^2.
\end{align*}
This converges to $0$ when taking $n\to\infty$ and then $\delta\to0$, proving uniform $\calC$-tightness of $\{\hat{X}^{\ell n}\}_n$.

As for $[\hat X^n]$, recalling that this process is nondecreasing,
we have by Lemma \ref{lem: X is UI} the estimate needed on $\PP([\hat X^n]_{t_0}>x)$. To estimate its modulus
of continuity, a similar consideration as for $\hat X^n$ gives
\begin{align*}
    \PP\Big(\sup_{t,s\leq t_0,0<t-s<\delta}([\hat{X}^n]_t-[\hat{X}^n]_s)>\eps\Big)
    &\leq\sum_{i=1}^N\frac{\E\big[([\hat{X}^n]_{t_{i+1}}-[\hat{X}^n]_{t_i})^2\big]}{(\eps/3)^2}.
\end{align*}
Using \eqref{305} to bound the terms in the above sum as we did for $\hat X^n$
yields uniform $\calC$-tightness of $[\hat X^n]$ as in the
argument for $\hat X^n$.

Next, by \eqref{eq:WR=Gamma} we have $(\hat{W}^{\tot,n},\hat{R}^n)=\Gam(\hat{X}^{\tot,n}+\hat{Y}^{\tot,n}+\hat{e}^n)$. Hence the continuity of $\Gam$
implies uniform $\calC$-tightness of $\{\hat{W}^{\tot,n},\hat{R}^n\}_n$.
\end{proof}

\begin{proof}[Proof of Lemma \ref{lem:Q-W to 0}]
	All the processes in the proof are associated with a fixed $\ell$, omitted
	from the notation. Write
	\begin{align*}
		W_t^n=\sum_{k=1}^{nt}J^n_k-nB^n_t=\sum_{k=D^n_t+1}^{nt}J^n_k-j^n_t,
	\end{align*}
	where $j_t^n=j^{\ell n}_t$ is the work that the server has completed by time $t$ of the job served at that time, if it is of type $\ell$ (0 otherwise).
	Denoting $\hat{\psi}^{n}=\hat{X}^n+\hat{Y}^n$,
	\begin{align*}
		\hat{W}^n_t&=\frac{1}{\sqrt{n}}\sum_{k=D^n_t+1}^{nt}\brac{\bar{J}^n_k+\bxi_k^n}-\frac{j^n_t}{\sqrt{n}}\\
		&=\frac{1}{\sqrt{n}}\sum_{k=D^n_t+1}^{nt}\brac{\bar{J}^n_k+\frac{\bb^n_k}{\sqrt{n}}}+\frac{1}{\sqrt{n}}\sum_{k=D^n_t+1}^{nt}\frac{1}{\mu}-\frac{j^n_t}{\sqrt{n}}\\
		&=\hat{\psi}^n(t)-\hat{\psi}^n\brac{\tfrac{D^n_t}{n}}+\frac{\hat{Q}^n_t}{\mu}-\frac{j^n_t}{\sqrt{n}},
	\end{align*}
	where on the last line we used \eqref{eq:Ql}.
	The last display readily gives two bounds, namely
	\begin{align}\label{eq:bound W-Q 1}
		\|\hat Q^n\|_{t_0}\leq\|\hat W^{\tot,n}\|_{t_0}+2\|\hat{\psi}^n\|_{t_0}+n^{-1/2}\|j^n\|_{t_0},
	\end{align}
	and, using again the identity \eqref{eq:Ql},
	\begin{align}\label{eq:bound W-Q}
		\Big\|\hat{W}^n-\frac{\hat{Q}^n}{\mu}\Big\|_{t_0}\leq w_{t_0}\brac{\hat{\psi}^n,\frac{\norm{Q^n}_{t_0}}{n}}+\frac{\norm{j^n}_{t_0}}{\sqrt{n}}.
	\end{align}
	We show that the RHS of
	\eqref{eq:bound W-Q} converges to 0 uniformly in probability, as $n\to\iy$.
	To this end, bound the second term as
	\begin{align*}
		\frac{\norm{j^n}_{t_0}}{\sqrt{n}}&\leq\max_{1\leq k\leq nt_0}\frac{\abs{J^n_k}}{\sqrt{n}}\\
		&\leq \max_{1\leq k\leq nt_0}\frac{|\bar J^n_k|}{\sqrt{n}}+\frac{\bxi^n_k}{\sqrt{n}}\\
		&\leq w_{t_0}(\hat{X}^n,\delta)+\frac{c}{\sqrt{n}},
	\end{align*}
	for any $\delta>0$.
	The uniform $\calC$-tightness of $\hat X^n$ stated in Lemma \ref{lem:X tight}
	implies that the above expression converges to 0 uniformly in probability
	as $n\to\iy$ and then $\delta\to0$.
	As for the first term in \eqref{eq:bound W-Q}, note that for $\eps>0$,
	\begin{align*}
		\PP\bigg(w_{t_0}\brac{\hat{\psi}^n,\frac{\norm{Q^n}_{t_0}}{n}}>\eps\bigg)&
		\leq\PP\big(w_{t_0}(\hat{\psi}^n,\delta)>\eps\big)+\PP\bigg(\frac{\|Q^n\|_{t_0}}{n}>\delta\bigg).
	\end{align*}
	Lemma $\ref{lem:X tight}$ implies that
	$\lim_{\delta\to 0}\limsup_{n\to\infty}\sup_{\beta\in\B^n}\PP(w_{t_0}(\hat{\psi}^n,\delta)>\eps)=0$. Also, by \eqref{eq:bound W-Q 1} and uniform $\calC$-tightness of $\hat W^{\tot,n}$ and $\hat{\psi}^n$, it follows that $n^{-1}\|Q^n\|_{t_0}\to0$ uniformly
	in probability. Altogether, these estimates show that
	$\|\hat W^n-\frac{\hat Q^n}{\mu}\|_{t_0}\to 0$
uniformly in probability, as $n\to\infty$.
\end{proof}

\begin{proof}[Proof of Lemma \ref{lem:W To 0}]
Let the strategy $\beta^{*n}$ be used by the SC.
Fix $\eps,t_0>0$ and denote by $\calE^n$ the event $\{\|\sum_{\ell=2}^L\hat{W}^{\ell n}\|_{t_0}>\eps\}$.
Denote by $\hat{\psi}^n$ the process
\begin{align*}
    \hat{\psi}^n_t=\sum_{\ell=2}^L\brac{\frac{\hat{e}^n(t)}{\mu^\ell}+\hat{X}^{\ell n}_t+\hat{Y}_t^{\ell n}}.
\end{align*}
Then, by \eqref{32},
\begin{align}\label{eq: W by F}
    \sum_{\ell=2}^L\hat{W}^{\ell n}_t=\hat{\psi}^n_t+\sqrt{n}t\sum_{\ell=2}^L\frac{1}{\mu^\ell}-n\sum_{\ell=2}^L\hat{B}_t^{\ell n}.
\end{align}
By Lemma \ref{lem:X tight} and the convergece $\hat{e}^n\to0$, $\{\hat{\psi}^n\}_n$ is $\calC$-tight.
Let
\begin{align*}
    &\tau=\tau^n=\inf\lt\{t\in[0,t_0]:\sum_{\ell=2}^L\hat{W}^{\ell n}_t>\eps\rt\},\\
    &\theta=\theta^n=\sup\lt\{t\in[0,\tau):\sum_{\ell=2}^L\hat{W}^{\ell n}_t<\frac{\eps}{2}\rt\}.
\end{align*}
Let $j^n_t$ denote the residual work of the job in service at time $t$.
On $\calE^n$, $\tau$ and $\theta$ take values in $[0,t_0]$.
In addition, $\sum_{\ell=2}^L(nB_\tau^{\ell n}-nB_\theta^{\ell n})\geq n(\tau-\theta)-j^n_\theta$ on $\calE^n$ because there is work to serve throughout this time interval (thus the server is not idle), and moreover, there is work of high-priority ($\ge2$) types so the cumulative effort of serving type 1 jobs is bounded by $j^n_\theta$ (in case the job in service at time $\theta$ is of type 1).
Therefore by \eqref{eq: W by F} and the heavy-traffic condition $\sum_{\ell=1}^L(\mu^\ell)^{-1}=1$, on $\calE^n$ one has
\begin{align*}
    \frac{\eps}{2}\leq\sum_{\ell=2}^L\brac{\hat{W}_{\tau}^{\ell n}-\hat{W}^{\ell n}_\theta}&=\hat{\psi}^n_\tau-\hat{\psi}^n_\theta+\sqrt{n}\brac{\tau-\theta}\brac{1-\frac{1}{\mu^1}}-\sqrt{n}\brac{\tau-\theta}+\frac{j^n_\theta}{\sqrt{n}}\\
    &=\hat{\psi}^n_\tau-\hat{\psi}^n_\theta-\frac{\sqrt{n}\brac{\tau-\theta}}{\mu^1}+\frac{j^n_\theta}{\sqrt{n}}.
\end{align*}
Denoting $\calE^n_1=\{\hat{\psi}^n_\tau-\hat{\psi}^n_\theta-\frac{(\tau-\theta)\sqrt{n}}{\mu^1}>\frac{\eps}{4}\}$,
\begin{align*}
    \prob{\calE^n}
    &\leq\PP(\calE^n_1)+\prob{\frac{j^n_\theta}{\sqrt{n}}>\frac{\eps}{4}}.
\end{align*}
To show that the second term converges to 0, note that it is bounded by $n^{-1/2}\max_{k\in[nt],\ell\in[L]}J^{\ell n}_k$.
Hence the argument given in the proof of Lemma \ref{lem:Q-W to 0} applies here.
As for the first term,
\begin{align*}
\PP(\calE^n_1)
&=\PP(\calE^n_1, \tau-\theta\geq n^{-1/4}) + \PP(\calE^n_1, \tau-\theta< n^{-1/4})
\\
    &\leq\prob{\hat{\psi}^n_\tau-\hat{\psi}^n_\theta-\frac{n^{1/4}}{\mu^1}>\frac{\eps}{4}}+\prob{w_{t_0}\brac{\hat{\psi}^n,n^{-1/4}}>\frac{\eps}{4}}\\
    &\leq\prob{\|\hat{\psi}^n\|_{t_0}>cn^{1/4}}+\prob{w_{t_0}\brac{\hat{\psi}^n,n^{-1/4}}>\frac{\eps}{4}}.
\end{align*}
Both terms converge to 0 by the $\calC$-tightness of $\{\hat{\psi}^n\}_n$.
This shows that $\PP(\calE^n)\to0$ and completes the proof.
\end{proof}

Theorem \ref{thm:cmu is opt} justifies the choice of the strategy
$\beta^{*n}$ by the SC, and this will be assumed for the rest of the paper.

\section{The MCP}
\label{sec:mcp}

In this section we prove Proposition \ref{pr:verification}.
The proof is based on existence of classical solutions to fully nonlinear
uniformly elliptic equations
with Dirichlet boundary conditions. Specifically, consider the following version of the HJB
equation, set on a finite interval, namely
\begin{align}\label{eq:HJB x}\tag{HJB-$x$}
    \sum_{\ell=1}^L\max_{(b^\ell,q^\ell)\in\text{ch}(K^\ell)}\lt\{q^\ell u''(w)+b^{\ell}u'\brac{w}\rt\}-u(w)+w=0,\quad0<w< x,
\end{align}
with the boundary conditions $u(0)=\ph_0$, $u(x)=\ph_1$.
Let us show that the assumptions of \cite[Theorem~17.18]{gilbarg98} hold.
To this end, the infimum in the formulation given in \cite{gilbarg98}
can be translated to supremum by the transformation $u\mapsto-u$ in
an obvious way. Moreover, as indicated in the paragraph that follows the statement of
\cite[Theorem~17.18]{gilbarg98}, it suffices to verify the conditions
in \cite[(17.62)]{gilbarg98}. To verify these conditions, note that
uniform ellipticity holds in our case by the assumption that $K^\ell$
is compact and contained in $\R\times(0,\iy)$. The boundedness of the coefficients
and their derivatives is trivial in our case, as these coefficients are given,
in the notation of \cite{gilbarg98}, as follows: the coefficients $a^{ij}_\nu$,
$b^i_\nu$ and $c_\nu$ are constants for each $\nu$ (the role of $\nu$
is played in our setting by the point $(b^\ell,q^\ell)\in\R^2$),
and $f_\nu(w)=w$. We thus deduce that there exists a unique classical solution to
\eqref{eq:HJB x}, for any $(\ph_0,\ph_1)\in\R^2$.
The proof now proceeds in several steps.

\noi{\bf Step 1.}
We prove that the value function $\sfV$ satisfies the boundary conditions of \eqref{eq:HJB},
namely $\sfV'(0)=0$ and $\limsup_{w\to\iy}|\sfV(w)|/w<\iy$.

For the boundary condition at infinity,
note that by Burkholder's inequality and property \ref{it:dXy in chK} of the MCP
one has $\bar\E[\|\sfX^\tot\|_t]\leq c(\bar\E[\sfX^\tot]_t])^{1/2}<ct^{1/2}$.
Also by property 4, $\|\sfY^\tot\|_t\le ct$.
By property 3, $\|\sfW^\tot\|_t\le 2(w+\|\sfX^\tot\|_t+\|\sfY^\tot\|_t)$.
Hence, for every $w$ and $\calS\in\frS_w$,
\begin{align*}
	0\le\sfC(\calS)&=\bar\E\Big[\int_0^\infty e^{-t}\sfW^\tot_tdt\Big]
	\\
	&\leq\int_0^\infty e^{-t}\bar\E[\|\sfW^\tot\|_t]dt\\
	&\leq c\int_0^\infty e^{-t}(w+t^{1/2}+t)dt.
\end{align*}
Since the constant $c$ does not depend on $\calS$, it follows that
\begin{equation}\label{115}
0\le\sfV(w)\le c(w+1), \qquad w\in[0,\iy).
\end{equation}
Next, to show $\sfV'(0)=0$, note that
\begin{align*}
	\sfV(w)-\sfV(0)&=\sup_{\calS\in\frS_w}\bar\E\Big[\int_0^\infty e^{-t}\sfW^\tot_tdt\Big]-\sup_{\calS\in\frS_0}\bar\E\Big[\int_0^\infty e^{-t}\sfW^\tot_tdt\Big]\\
	&=\sup_{\calS\in\frS_w}\bar\E\Big[\int_0^\infty e^{-t}\Gam_1(w+\sfX^\tot+\sfY^\tot)(t)dt\Big]-\sup_{\calS\in\frS_0}\bar\E\Big[\int_0^\infty e^{-t}\Gam_1(\sfX^\tot+\sfY^\tot)(t)dt\Big]\\
	&\leq
	\sup_{\calS\in\frS_w}\bar\E\Big[\int_0^\infty e^{-t}\big(\Gam_1(w+\sfX^\tot+\sfY^\tot)(t)-\Gam_1(\sfX^\tot+\sfY^\tot)(t)\big)dt\Big].
\end{align*}
We shall use the following property of the map $\Gam_1$, which can be easily checked
directly by its definition.
Let $w\ge0$, $\ph\in \calC_\R[0,\iy)$, $\psi^1=\Gam_1(\ph)$ and $\psi^2=\Gam_1(w+\ph)$.
Denote the hitting time $\tau=\tau(w+\ph)=\inf\{t\ge0:w+\ph_t\le0\}$. Then for all $t$, $\psi^1_t\le\psi^2_t$,
whereas for all $t\ge\tau$, $\psi^1_t=\psi^2_t$.
Hence letting $\tau=\tau(w+\sfX^\tot_t+\sfY^\tot_t)
=\inf\{t\geq0:w+\sfX^\tot_t+\sfY^\tot_t\leq0\}$, we have
\begin{align*}
	\sfV(w)-\sfV(0)\leq\sup_{\calS\in\frS_w}\bar\E\Big[\int_0^\tau e^{-t}\big(\Gam_1(w+\sfX^\tot+\sfY^\tot)(t)-\Gam_1(\sfX^\tot+\sfY^\tot)(t)\big)dt\Big].
\end{align*}
Next, using the Lipschitz property of $\Gam_1$, we bound further
\begin{align}\label{120}
	0\le\frac{\sfV(w)-\sfV(0)}{w}\leq\frac{1}{w}\sup_{\calS\in\frS_w}\bar\E\Big[\int_0^\tau e^{-t}cwdt\Big]\leq c\sup_{\calS\in\frS_w}\bar\E[\tau\land 1].
\end{align}
Because for all $w$ and all $\calS\in\frS_w$, the processes $\sfY^\tot$ are uniformly Lipschitz,
denoting by $c_0$ the Lipschitz constant, we have
\begin{align*}
	\PP(\tau>t)
	&\leq\PP\big(\inf_{0\leq s\leq t}\{\sfX_s^\tot+c_0s\}>-w\big).
\end{align*}
To proceed, note that by Property 4 of the MCP and the fact that $K^\ell\subset\R\times(0,\infty)$,
the mapping $s\mapsto[\sfX^\tot]_s$ has the property
\[
c_1\le\frac{[\sfX^\tot]_{s_2}-[\sfX^\tot]_{s_1}}{s_2-s_1}\le c_2
\]
for all $0\le s_1<s_2$, where $0<c_1<c_2$ are constants.
Consequently, this mapping admits a Lipschitz, strictly increasing inverse, denoted $T$.
By the time-change theorem for martingales \cite[Theorem 3.4.6]{karatzas2014}, the process $Z_t=\sfX^\tot_{T(t)}$ is a SBM. Thus
\begin{align*}
	\PP(\tau>t)&\leq
	\PP\big(\inf_{0\leq s\leq t}\{Z_{[\sfX^\tot]_s}+c_0s\}>-w\big)\\
	&\le\PP\big(\inf_{0\leq s\leq [\sfX^\tot]_t}\{Z_s+c_0T(s)\}>-w\big)\\
	&\leq\PP\big(\inf_{0\leq s\leq c_1t}\{Z_s+cs\}>-w\big).
\end{align*} 
The RHS above does not depend on $\calS$, and, for every $t>0$, converges to zero
as $w\to0$. This shows that the RHS of \eqref{120} converges to zero as $w\to0$,
which shows that $\sfV'(0)=0$.

\noindent{\bf Step 2.}
Denote by $f=f^{(x)}$ the unique solution, defined on $[0,x]$,
to \eqref{eq:HJB x} with the boundary conditions
$f(0)=\sfV(0)$, $f(x)=\sfV(x)$.
In this step we relate this solution to a control problem set on the bounded domain $[0,x]$.
Given any admissible control system for the MCP, $\calS$, let 
\[
\tau_x=\inf\{t\geq 0:\sfW_t^\tot\notin (0,x)\}.
\]
For $0\leq t\leq \tau_x$, property 3 of the MCP
reduces to $\sfW^\tot=w+\sfX^\tot+\sfY^\tot$ and $\sfR=0$.
The goal of this step is to prove the identity
\begin{align}\label{eq:u a}
	f(w)=\sup_{\calS\in\frS_w}\bar\E\lt[\int_0^{\tau_x}e^{-s}\sfW_s^\tot ds+e^{-\tau_x}\indicator{\tau_x<\iy}\sfV\brac{\sfW^\tot_{\tau_x}}\rt],\quad 0\le w\le x.
\end{align}
Abbreviating $\sfW^\tot$ to $\sfW$,
applying It\^o's lemma and denoting $\tau=\tau(x,t):=\tau_x\land t$,
\begin{align*}
	e^{-\tau}f(\sfW_{\tau})=f(w)-\int_0^{\tau}e^{-s}f(\sfW_s)ds+\int_0^{\tau}e^{-s}f'(\sfW_s)d\sfW_s+\frac{1}{2}\int_0^{\tau}e^{-s}f''(\sfW_s)d[\sfX^\tot]_s.
\end{align*}
Substitute $d\sfW_s=d\sfX^\tot_s+d\sfY^\tot_s$ and $d[\sfX^\tot]_s=\sum_{\ell=1}^Ld[\sfX^{\ell}]_s$ (since, by property 4 of the MCP, $[\sfX^\ell,\sfX^{\ell'}]=0$ for $\ell\neq\ell'$) and take expectation.
Using the fact that $\sfX^\tot$ is a martingale gives
\begin{align}\label{100}
	&\bar\E\lt[e^{-\tau}f\brac{\sfW_{\tau}}\rt]\\
	\notag
	&\quad=f(w)-\bar\E\bigg[\int_0^{\tau}e^{-s}f\brac{\sfW_s}ds-\sum_{\ell=1}^L\Big(\int_0^{\tau}e^{-s}f'(\sfW_s)d\sfY^\ell_s+\frac{1}{2}\int_0^{\tau}e^{-s}f''(\sfW_s)d[\sfX^{\ell}]_s\Big)\bigg].
\end{align}
Next, by Properties 2 and 4 of the MCP, the sample paths of $\sfY^\ell$ and $[\sfX^\ell]$ are absolutely continuous.
Denote by $\frac{d\sfY_s^\ell}{ds}$ and $\frac{d[\sfX^\ell]_s}{ds}$ their respective a.e.\  derivatives.
Then b Property 4 of the MCP, for a.e.\ $s$,
\[
\Big(\frac{d\sfY_s^\ell}{ds},\frac{d[\sfX^\ell]_s}{2ds}\Big)\in\text{ch}(K^\ell).
\]
As a result, for every $\ell$,
\begin{align*}
	\int_0^{\tau}e^{-s}f'(\sfW_s)d\sfY^\ell_s+\frac{1}{2}\int_0^{\tau}e^{-s}f''(\sfW_s)d[\sfX^{\ell}]_s
	&=
	\int_0^{\tau}e^{-s}\Big(f'(\sfW_s)\frac{d\sfY^\ell_s}{ds}
	+\frac{1}{2}f''(\sfW_s)\frac{d[\sfX^{\ell}]_s}{ds}\Big)ds
	\\
	&\le
	\int_0^{\tau}e^{-s}\max_{(b^\ell,q^\ell)\in\text{ch}(K^\ell)}
	\Big(f'(\sfW_s)b^\ell
	+f''(\sfW_s)q^\ell\Big)ds.
\end{align*}
Using this in \eqref{100} gives
\begin{align}\label{eq:bnd by max1}
	&\E\lt[e^{-\tau}f\brac{\sfW_{\tau}}\rt]\\
	&\qquad\leq f(w)-\E\bigg[\int_0^{\tau}e^{-s}f\brac{\sfW_s}ds-\sum_{\ell=1}^L\int_0^{\tau}e^{-s}\max_{(b^\ell,q^\ell)\in\text{ch}(K^\ell)}\lt\{f''\brac{\sfW_s}q^\ell+b^{\ell}f'\brac{\sfW_s}\rt\}ds\bigg]\nonumber\\
	&\qquad=f(w)-\E\bigg[\int_0^{\tau}e^{-s}\sfW_sds\bigg],\nonumber
\end{align}
where the last equality holds because $f$ solves \eqref{eq:HJB x}.
On the event $\{\tau_x<t\}$, we have $\sfW_{\tau_x}\in\{0,x\}$, hence the boundary conditions of \eqref{eq:HJB x} yield
\begin{align}\label{114}
	f(w)\geq\bar\E\lt[\int_0^{\tau_x\land t}e^{-s}\sfW_sds+e^{-\tau_x}\indicator{\tau_x<t}\sfV\brac{\sfW_{\tau_x}}+e^{-t}\indicator{\tau_x\geq t}f(\sfW_t)\rt].
\end{align}
We now take the limit $t\to\infty$.
Note that for $s\le \tau_x\w t$, $\sfW_s$ is bounded by $x$, and the function
$u_x$ is also bounded. Using the bounded convergence theorem for the first and last terms, and, noticing that the almost sure limit of $e^{-\tau_x}\indicator{\tau_x<t}\sfV\brac{\sfW_{\tau_x}}$ is $e^{-\tau_x}\indicator{\tau_x<\infty}\sfV\brac{\sfW_{\tau_x}}$ and using Fatou's lemma for this term, and finally taking supremum over all
admissible systems, we obtain that $f(w)$ is bounded below
by the RHS of \eqref{eq:u a}.


For the reverse inequality, we invoke a measurable selection theorem, according to which for each $\ell$ there exists
a function $(b^{\ell,*},q^{\ell,*}):[0,x]\to \text{ch}(K^\ell)$ such that
for every $y\in[0,x]$,
\begin{align}\label{736}
q^{\ell,*}(y)f''(y)+b^{\ell,*}(y)f'(y)=\max\{qf''(y)+bf'(y)
:(b,q)\in \text{ch}(K^\ell)\}.
\end{align}
This follows directly from \cite[Lemma 8.10]{budhiraja19}.
Denote $\sigma^{\ell,*}(y)=2\sqrt{q^{\ell,*}(y)}$.


The argument requires the construction of an admissible system
for the MCP, $\calS^*$. The first step in this construction will be to argue that there exists a weak solution $(\sfW,\sfR,Z,\{\bar\calF_t\})$ to the  SDE
\begin{align}\label{eq:SDE1}\tag{SDE1}
	\sfW_t=w+\sum_{\ell=1}^L\int_0^t\brac{\sigma^{{\ell},*}(\sfW_s)dZ_s^\ell+b^{\ell,*}(\sfW_s)ds}+\sfR_t,
\end{align}
where $w\ge0$, $\{\bar\calF_t\}$ is a filtration
$Z=(Z^\ell)$ is a standard $L$-dimensional $\{\bar\calF_t\}$-BM, $\sfR$ the reflection term at $0$,
and $(\sfW,\sfR)$ are adapted.
Denote $\hat\sigma(y)=(\sum_{\ell=1}^L(\sigma^{\ell,*}(y))^2)^{1/2},y\geq 0$.
Consider the SDE on $\R$
\begin{align*}
	\Psi_t=w+\int_0^t\hat\sigma(|\Psi_s|)d\tilde{Z}_s+\int_0^t\text{sign}\brac{\Psi_s}b^{\tot,*}\brac{|\Psi_s|}ds
\end{align*}
where $\tilde Z$ is a 1d BM, and $\text{sign}(y)=1$ for $y>0$ and $=-1$ for $y\le0$.
By \cite[Theorem 5.5.15]{karatzas2014} and the fact that $\hat\sig$ is bounded away from zero,
this SDE admits a weak solution $(\Psi,\tilde Z,\{\tilde\calF_t\})$. Denote $\sfW_t=|\Psi_t|$ and $\hat Z_t=\int_0^t\sign(\Psi_s)d\tilde Z_s$. Then clearly $\hat Z$ is an SBM, and, by  Tanaka's formula \cite[Propositions 9.2 and 9.3]{LeGall2016}, there exists a process $\sfR$ with sample paths in $\calC^+$ such that
\begin{align}\label{eq:111}
	\sfW_t&=w+\int_0^t\hat\sigma(\sfW_s)d\hat{Z}_s+\int_0^tb^{\tot,*}\brac{\sfW_s}ds+\sfR_t
\end{align}
and $\int_{[0,\iy)}\sfW_td\sfR_t=0$ a.s.
Notice that the term $\sum b^{\ell,*}(\sfW_s)ds$ of \eqref{eq:SDE1} agrees with
the term $b^{\tot,*}(\sfW_s)ds$ of \eqref{eq:111}. Hence the proof that \eqref{eq:SDE1}
possesses a weak solution will be complete once we construct
an $L$-dimensional SBM $(Z^\ell)$ that satisfies
\begin{align}\label{eq:112}
	\sum_{\ell=1}^L\sigma^{\ell,*}(\sfW_s)dZ^\ell_s=\hat\sigma(\sfW_s)d\hat Z_s.
\end{align}
To this end we look for a measurable map $U:(0,\iy)^L\to\R^{L\times L}$
that sends any $v\in\R_+^L\setminus\{0\}$ to a matrix whose columns comprise an orthonormal basis of $\R^L$.
Denote by $\{e_\ell\}$ the standard basis of $\R^L$. Given a column vector $v\in(0,\iy)^L$, the collection $\{v,e_1,...,e_{L-1}\}$ spans $\R^L$. Denote by $U(v)$ the matrix obtained by the Gram-Schmidt procedure, applied on the collection $\{v,e_1,...,e_{L-1}\}$.
Then the first column of the matrix $U(v)$ is equal to $v/\|v\|$, and 
\begin{equation}\label{113}
	v^TU(v)=\|v\|e_1^T.
\end{equation}
In addition, the map $U$ is measurable by the Gram-Schmidt construction.

Assume WLOG that the probability space supports $L-1$ independent SBMs, independent of $\hat Z$; denote the resulting $L$-dimensional SBM by $(\check Z^\ell)$ where  $\check Z^1=\hat Z$. Denote by $\sig^*(\sfW_t)$ the column-vector valued process
$(\sigma^{1,*}(\sfW_t),...,\sigma^{L,*}(\sfW_t))^T$.
Define the $\R^{L\times L}$-valued process $\eta_t=U(\sigma^*(\sfW_t))$.
Finally, let
\[
Z_t=\int_0^t\eta_sd\check Z_s.
\]
Because $\eta_t$ is an orthonormal matrix for every $t$,
it follows by \cite[Proposition 3.2.17]{karatzas2014} that $Z$ is an $L$-dimensional SBM.
It remains to show \eqref{eq:112}.
Note that the LHS of \eqref{eq:112} is $\sigma^*(\sfW_s)^TdZ_s$. By construction,
\begin{align*}
	\sigma^*(\sfW_s)^TdZ_s&=\sigma^*(\sfW_s)^T\eta_sd\check Z_s\\
	&=\sigma^*(\sfW_s)^TU(\sig^*(\sfW_s))d\check Z_s
	\\
	&=\|\sig^*(\sfW_s)\|d\check Z^1_s\\
	&=\hat \sig(\sfW_s)d\hat Z_s,
\end{align*}
where the third equality follows from \eqref{113} and the last one by the identities
$\hat\sig(\cdot)=\|\sig^*(\cdot)\|$ and $\check Z^1=\hat Z$.
This completes the proof that the tuple $(\sfW,\sfR,Z)$ solves \eqref{eq:SDE1}.

The next step in the construction of $\calS^*$ is to let
\[
\sfX^{\ell}_t=\int_0^t\sigma^{\ell,*}(\sfW_s)dZ^\ell_s,
\qquad
\sfY_t^{\ell}=\int_0^tb^{\ell,*}(\sfW_s)ds,
\qquad
\bar\calF_t=\tilde\calF_t\vee\sig\{\check Z^\ell_s:0\le s\le t,2\le\ell\le L\}.
\]
It remains to prove that $\calS$ satisfies Properties 1--4 of the MCP.
Properties 1--2 follow by construction.
For Property 3, by construction, \eqref{eq:SDE1} reads
$\sfW_t=w+\sfX^\tot_t+\sfY^\tot_t+\sfR_t$. Moreover, $\sfW_t$ is $\R_+$-valued
and $\int_{[0,\iy)}\sfW_td\sfR_t=0$ a.s. Hence, by Skorohod's lemma,
$(\sfW,\sfR)=\Gam(w+\sfX^\tot+\sfY^\tot)$ a.s.
As for Property 4, note first that for $\ell\neq\ell'$, one has $[Z^\ell,Z^{\ell'}]=0$, hence
$[\sfX^{\ell},\sfX^{\ell'}]=0$, using \cite[Proposition 3.2.17]{karatzas2014}.
Moreover, by definition, $(b^{\ell,*}(y),(\sigma^{\ell,*}(y))^2)\in{\rm ch}(K^\ell)$ for all $y\in\R_+$ and
\begin{align*}
	&\Bigg(\frac{\sfY^\ell_t-\sfY^\ell_s}{t-s},\frac{[\sfX^\ell]_t-[\sfX^\ell]_s}{2(t-s)}\Bigg)=\frac{1}{t-s}\Bigg(\int_s^tb^{\ell,*}(\sfW_\theta)d\theta,\int_s^tq^{\ell,*}(\sfW_\theta) d\theta\Bigg)
\end{align*}
which belongs to ${\rm ch}(K^\ell)$ as an average of elements of the convex hull.

We now go back to the argument leading to \eqref{114}, and show that, with the system $\calS^*$ just constructed,
\eqref{114} holds with equality. To this end, note by the construction of $\sfX$ and $\sfY$,
and the definition of $b^*$ and $\sig^*$, that
\begin{align*}
	\int_0^{\tau}e^{-s}\Big(\frac{1}{2}f''(\sfW_s)d[\sfX^\ell]_s+f'(\sfW_s)d\sfY^\ell_s\Big)&=\int_0^{\tau}e^{-s}\brac{q^{\ell,*}(\sfW_s)f''\brac{\sfW_s}+b^{\ell,*}(\sfW_s)f'\brac{\sfW_s}}ds\\
	&=\int_0^{\tau}e^{-s}\max_{(b^\ell,q^\ell)\in\text{ch}(K^\ell)}\lt\{q^\ell f''(\sfW_s)+b^{\ell}f'\brac{\sfW_s}\rt\}ds.
\end{align*}
Reviewing the proof of \eqref{114} we see that it holds as equality in this case.
We finally take $t\to\infty$ as before, except that this time we apply the bounded convergence theorem for the second term,
appealing to \eqref{115}.
This proves the identity \eqref{eq:u a}.

\noi{\bf Step 3.}
By the dynamic programming principle, the function $f=f^{(x)}$
defined on $[0,x]$ and the function $\sfV$ are related as $\sfV(w)=f^{(x)}(w)$, $0\leq w\leq x$.
As this is true for all $x\in\R_+$, it follows that the function $\sfV$ is a classical solution to \eqref{eq:HJB} on $\R_+$,
and by Step 1 it satisfies the boundary conditions attached to it.
To complete the proof of part 1 of the Proposition, one must show that if $u$ is any classical
solution then $u=\sfV$. This can be shown by a calculation very similar to that carried out in Step 2;
we omit the details.

\noi{\bf Step 4.}
We next prove Part 2.
We  consider a system constructed exactly as $\calS^*$ from Step 2
except that the function $u$ is used instead of $f^{(x)}$,
and with a slight abuse of notation denote it again by $\calS^*$.
Then the relation of this system to the SDE \eqref{eq:111}
is as claimed, and it remains to prove that it is optimal for the MCP
(which is different than the statement already proved,
that it is optimal for the control problem \eqref{eq:u a}).
A calculation similar to that given in Step 2,
replacing $\tau$ by $t$ and using the boundary condition $\sfV'(0)=0$,
gives a version of \eqref{eq:bnd by max1} with equality, namely
\[
e^{-t}\EE[\sfV(\sfW_t)]=\sfV(w)-\EE\Big[\int_0^te^{-s}\sfW_sds\Big].
\]
Thus optimality follows once it is shown that the LHS above converges to $0$ as $t\to\iy$.
It is easy to see by \eqref{eq:111} that $\EE[\sfW_t]\le c(1+t)$ (where $c$ may depend on $w$
but not on $t$). Combined with \eqref{115} this shows $\EE[|\sfV(\sfW_t)|]\le c(1+t)$.
Thus the LHS above converges to $0$, and optimality is proved.
The statement relating $b$ and $\sig$ to $\bar{\mathbb{H}},\mathbb{H}^\ell$
follows by the construction in Step 2.
Specifically, this statement is exactly equations \eqref{736} and \eqref{eq:111}.

\noi{\bf Step 5.} For Part 3, note that nonnegativity of $\sfV$ follows from its definition.
Monotonicity follows by the monotonicity of $w\mapsto\Gam_1(w+\ph)$
for any trajectory $\ph$.
For convexity, we first show that
$w\mapsto\Gam_1(w+\ph)$ is convex for every $\ph$.
For any pair $\alpha,\alpha'\geq0$, $\alpha+\alpha'=1$, and any $w,w'\geq 0$,
using the formula $\Gam_1(\ph)(t)=\ph_t+\sup_{s\le t}(-\ph_s)^+$,
\begin{align*}
	\Gam_1(\alpha w+\alpha'w'+\ph)(t)&=
	\alpha w+\alpha'w'+\ph_t+\sup_{s\in[0,t]}(-\alpha w-\alpha'w'-\ph_s)^+\\
	&=\alpha w+\alpha'w'+\ph_t
	+\sup_{s\in[0,t]}\big(\alpha (-w-\ph_s)+\alpha'(-w'-\ph_s)\big)^+\\
	&\leq\alpha(w+\ph_t)+\alpha'(w'+\ph_t)
	+\alpha\sup_{s\in[0,t]}(-w-\ph_s)^++\alpha'\sup_{s\in[0,t]}(-w'-\ph_s)^+\\
	&=\alpha\Gam_1(w+\ph)(t)+\alpha'\Gam_1(w'+\ph)(t).
\end{align*}
Let $\calS\in\frS_{\alpha w+\alpha'w'}$ be given.
Then $\sfW=\Gam_1(\al w+\al'w'+\sfX+\sfY)$.
Construct $\calS^{(w)}$ and $\calS^{(w')}$ using the same probability space
by setting $\sfW^{(w)}=\Gam_1(w+\sfX+\sfY)$
and $\sfW^{(w')}=\Gam_1(w'+\sfX+\sfY)$.
Then $\sfW\le\al\sfW^{(w)}+\al'\sfW^{(w')}$.
Hence
\[
\sfC(\calS)\le\al\sfC(\calS^{(w)})+\al'\sfC(\calS^{(w')})
\le\alpha\sfV(w)+\alpha'\sfV(w'),
\]
and maximizing over $\calS\in\frS_{\alpha w+\alpha'w'}$ yields convexity.
\QED

\section{The upper bound}
\label{sec:ub}

The goal of this section and the next is to prove Theorem \ref{thm:Vn to V},
where this section is devoted to establishing
\begin{equation}\label{101}
\limsup_nV^n\le \sfV(0).
\end{equation}
To prove it, note that $V^n\le \sup_{a^n}C^n(\beta^{*n}(a^n),a^n)$.
Therefore
it suffices to show that for every sequence $a^n$ of admissible controls for the adversary,
one has
\begin{equation}\label{103}
\limsup_n C^n(\beta^{*n}(a^n),a^n)\le\sfV(0).
\end{equation}
To prove \eqref{103}, fix $a^n$.
Lemma \ref{lem:X tight} establishes
the $\calC$-tightness of the tuple $\{\hat{X}^{\ell n},[\hat{X}^{\ell n}],\hat{Y}^{\ell n},\hat{W}^{\tot,n},\hat{R}^n\}_{n\in\N}$.
Thus to prove \eqref{103}, it suffices to show that \eqref{103} holds
along any subsequence where the tuple converges. In what follows
we fix a subsequence where the tuple converges, and denote its limit
by $\{\sfX^{\ell},\sfU^\ell,\sfY^{\ell},\sfW^{\tot},\sfR\}$. (We later prove that $\sfU^\ell=[\sfX^{\ell }]$).
WLOG, the limit processes are defined on the original probability space.

The main step toward proving this upper bound will be to show that the limit tuple
defines an admissible system for the MCP, as we state below in Proposition \ref{pr:upper bnd}.
We first need to establish the following.

\begin{lemma}\label{lem:123}
	\begin{enumerate}
		\item \label{it: psi mart} Let $\psi$ be defined by
		$\psi^{\ell n}_t=\frac{1}{n}\sum_{k=1}^{nt}\big(\tfrac{1}{2}(\bar{J}_k^{\ell n})^2-\bq_{k}^{\ell n}\big)$,
		and let $\gamma^{\ell\ell' n}_t=[\hat{X}^{\ell n},\hat{X}^{\ell' n}]_t$.
		Then  $\psi^{\ell n}$ and $\gamma^{\ell \ell' n}$ are martingales w.r.t. $\{\calG_t^n\}$
		for $\ell\ne\ell'$.
		\item \label{it:psi to 0} As $n\to\iy$, $\psi^{\ell n}\to0$ and $\gamma^{\ell\ell' n}\to0$ in probability
		for $\ell\neq\ell'$.
	\end{enumerate}
\end{lemma}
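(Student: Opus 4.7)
\textbf{Proof proposal for Lemma \ref{lem:123}.}

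The plan is straightforward: verify the martingale property by directly computing one-step conditional expectations, and then prove the convergence to zero by a second-moment estimate together with Doob's maximal inequality.

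For part \ref{it: psi mart}, fix $s<t$ and assume first that $\lfloor nt\rfloor=\lfloor ns\rfloor+1$, writing $k=\lfloor nt\rfloor$. Since $\pi^{\ell n}_k$ is $\calF^n_{k-1}$-measurable by \eqref{20}, the construction \eqref{22} yields
\[
\E\big[(\bar J^{\ell n}_k)^2\,\big|\,\calF^n_{k-1}\big]=(\sigma^{\pi^{\ell n}_k})^2=2\bq^{\ell n}_k,
\]
so the increment $\frac{1}{n}\big(\tfrac12(\bar J^{\ell n}_k)^2-\bq^{\ell n}_k\big)$ has zero conditional expectation, proving the martingale property of $\psi^{\ell n}$. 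For $\gamma^{\ell\ell' n}$, recall from Lemma \ref{lem: X is UI} that $[\hat X^{\ell n},\hat X^{\ell' n}]_t=\frac{1}{n}\sum_{k=1}^{nt}\bar J^{\ell n}_k\bar J^{\ell' n}_k$; the product structure \eqref{22} gives conditional independence of $J^{\ell n}_k$ and $J^{\ell' n}_k$ given $\calF^n_{k-1}$, whence
\[
\E\big[\bar J^{\ell n}_k\bar J^{\ell' n}_k\,\big|\,\calF^n_{k-1}\big]=\E[\bar J^{\ell n}_k\,|\,\calF^n_{k-1}]\,\E[\bar J^{\ell' n}_k\,|\,\calF^n_{k-1}]=0
\]
for $\ell\neq\ell'$. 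The general case $t>s$ follows by the tower property exactly as in Lemma \ref{lem: X is UI}.

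For part \ref{it:psi to 0}, both $\psi^{\ell n}$ and $\gamma^{\ell\ell' n}$ are $\{\calG^n_t\}$-martingales with orthogonal increments. Using Assumption \ref{ass:Pi}.\ref{it:4th moment}, which gives $\sup_{n,\ell,k}\E[(\bar J^{\ell n}_k)^4]<\infty$, we estimate
\[
\E[(\psi^{\ell n}_t)^2]=\frac{1}{n^2}\sum_{k=1}^{\lfloor nt\rfloor}\E\Big[\Big(\tfrac12(\bar J^{\ell n}_k)^2-\bq^{\ell n}_k\Big)^2\Big]\leq\frac{c\,t}{n},
\]
and similarly, by Cauchy--Schwarz applied to each term,
\[
\E[(\gamma^{\ell\ell' n}_t)^2]=\frac{1}{n^2}\sum_{k=1}^{\lfloor nt\rfloor}\E\big[(\bar J^{\ell n}_k)^2(\bar J^{\ell' n}_k)^2\big]\leq\frac{c\,t}{n}.
\]
Applying Doob's $L^2$ inequality to each martingale yields $\E[\|\psi^{\ell n}\|_t^2]\vee\E[\|\gamma^{\ell\ell' n}\|_t^2]\to 0$, which implies convergence to zero in probability, uniformly on compact time intervals.

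There is no real obstacle here: the only subtlety is making sure that the measurability of $\pi^{\ell n}_k$ with respect to $\calF^n_{k-1}$ is properly exploited so that conditioning on $\calF^n_{k-1}$ effectively freezes the selected law, after which \eqref{22} delivers both the vanishing first-moment identities (used for the martingale property) and the bounded fourth moments (used for the second-moment estimates).
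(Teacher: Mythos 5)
Your proof is correct and follows essentially the same route as the paper: one-step conditional expectations plus the tower property for the martingale property, and a second-moment estimate on the pure-jump martingale (the paper uses Burkholder's inequality, you use orthogonal increments plus Doob's $L^2$ inequality, which are interchangeable here) together with Assumption \ref{ass:Pi}.\ref{it:4th moment} for the vanishing in probability.
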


\begin{proof}
Part 1.
	The proof is based on $(\bsigma_k^{\ell n})^2/2=\bq_k^{\ell n}$ and on the fact that for $m\leq k-1$
	\begin{align}\label{eq:L7.2-1}
		\E[(\bar{J}_k^{\ell n})^2-(\bsigma^{\ell n})^2_k|\calF_m^n]=\E\big[\E[(\bar{J}_k^{\ell n})^2-(\bsigma_k^{\ell n})^2|\calF^n_{k-1}]\big|\calF^n_m\big]=\E\big[\E[(\bar{J}_k^{\ell n})^2|\calF^n_{k-1}]-(\bsigma_k^{\ell n})^2\big|\calF^n_m\big]=0,
	\end{align}
	by the tower property and the definitions of $\bar{J}_k^{\ell n}$ and $(\bsigma_k^{\ell n})^2$.

	For $\psi^{\ell n}$, if $\floor{nt}=\floor{ns}$ for $s<t$, then $\E[\psi_t^{\ell n}|\calG^n_s]=\psi^{\ell n}_s$.
	Otherwise, by \eqref{eq:L7.2-1},
	\begin{align*}
		\E\lt[\psi^{\ell n}_t-\psi^{\ell n}_s\middle|\calG^n_s\rt]&=\frac{1}{2n}\sum_{k= \floor{ns}+1}^{nt}\E[(\bar{J}_k^{\ell n})^2-(\bsigma_{k}^{\ell n})^2|\calG^n_s]=0.
	\end{align*}
	For $\gamma^{\ell\ell' n}$,
	\begin{align*}
		\E\big[\gamma_t^{\ell\ell' n}-\gamma_s^{\ell\ell' n}\big|\calG^n_s\big]&=\frac{1}{n}\sum_{k=\floor{ns}+1}^{nt}\E\big[\bar{J}_k^{\ell n}\bar{J}_k^{\ell' n}\big|\calG^n_s\big]\\
		&=\frac{1}{n}\sum_{k=\floor{ns}+1}^{nt}\E\Big[\E\big[\bar{J}_k^{\ell n}\bar{J}_k^{\ell' n}\big|\calF_{k-1}^n\big]\Big|\calG^n_s\Big]\\
		&=\frac{1}{n}\sum_{k=\floor{ns}+1}^{nt}\E\Big[\E\big[\bar{J}_k^{\ell n}\big|\calF^n_{k-1}\big]\E\big[\bar{J}_k^{\ell' n}\big|\calF^n_{k-1}\big]\Big|\calG^n_s\Big]=0,
	\end{align*}
	where the second identity follows from the conditional independence of $J_k^{\ell n}$ and $J_k^{\ell' n}$ stated in \eqref{22}.
	
Part 2.
The main tool for estimating the two martingales is Burkholder's inequality.
	Starting with $\psi^{\ell n}$, by the identity $(\bsigma_k^{\ell n})^2/2=\bq_k^{\ell n}$,
	\begin{align*}
		\E\big[\|\psi^{\ell n}\|_{t_0}^2\big]\leq&  c\E\big[\big[\psi^{\ell n}\big]_{t_0}\big]\\
		=&\frac{c}{n^2}\E\bigg[\sum_{k=1}^{nt_0}((\bar{J}_k^{\ell n})^2-(\bsigma_{k}^{\ell n})^2)^2\bigg]\\
		=&\frac{c}{n^2}\sum_{k=1}^{nt_0}\E\big[\E[(\bar{J}_k^{\ell n})^4-2(\bar{J}_k^{\ell n})^2(\bsigma_k^{\ell n})^2+(\bsigma_{k}^{\ell n})^4|\calF_{k-1}^n]\big]\\
		\leq&\frac{c}{n^2}\sum_{k=1}^{nt_0}\E\big[(\bar{J}_k^{\ell n})^4\big]\\
		\leq&c\frac{nt_0}{n^2}\to 0,
	\end{align*}
	as $n\to\iy$,
	where we used \eqref{22} and the measurability of $\bsigma_k^2$ with respect to $\calF^n_{k-1}$, and the last inequality is due to Assumption \ref{ass:Pi}.\ref{it:4th moment}.
Finally for $\gamma^{\ell\ell' n}$, $\ell\neq\ell'$, by \eqref{22},
	\begin{align*}
		\E\big[\|\gamma^{\ell\ell' n}\|_{t_0}^2\big]&\leq c\E\big[[\gamma^{\ell\ell' n}]_{t_0}\big]=\frac{c}{n^2}\E\bigg[\sum_{k=1}^{nt_0}(\bar{J}_k^{\ell n})^2(\bar{J}_k^{\ell' n})^2\bigg]\le\frac{c nt_0}{n^2}\to 0,
	\end{align*}
	as $n\to\iy$.
\end{proof}

Recall that we have denoted the subsequential limit by $((\sfX^\ell,\sfY^\ell,\sfU^\ell)_\ell,\sfW^\tot,\sfR)$.
Denote
\[
\bar\calF_t=\sig\{\sfX^\ell_s,\sfY^\ell_s:s\in[0,t],\ell\in[L]\}.
\]

\begin{proposition}\label{pr:upper bnd}
The tuple $\calS=(\Om,\calF,\PP, \{\bar\calF_t\},(\sfX^\ell,\sfY^\ell),\sfW^\tot,\sfR)$
forms an admissible system for the MCP for $w=0$.
\end{proposition}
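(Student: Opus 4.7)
The plan is to verify Properties 1--4 of the MCP in turn, using the joint convergence of the tuple $\{\hat X^{\ell n}, [\hat X^{\ell n}], \hat Y^{\ell n}, \hat W^{\tot,n}, \hat R^n\}$ along the fixed subsequence and the auxiliary martingales from Lemma \ref{lem:123}. Property 2 is immediate: by the definition of $\bar\calF_t$, each $\sfY^\ell$ is adapted and $\sfY^\ell_0=0$ follows from the convergence $\hat Y^{\ell n}(0)=0$. Property 3 follows from $(\hat W^{\tot,n},\hat R^n)=\Gam(\hat X^{\tot,n}+\hat Y^{\tot,n}+\hat e^n)$, the uniform bound $\|\hat e^n\|_t\le n^{-1/2}$, $\calC$-tightness, and continuity of $\Gam$ (u.o.c.-to-u.o.c.\ on $\calC$), giving $(\sfW^\tot,\sfR)=\Gam(\sfX^\tot+\sfY^\tot)$ a.s., which is the correct initial condition $w=0$ since the $n$-th system starts empty.

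For Property 1, the continuity and $\sfX^\ell_0=0$ are inherited from $\calC$-tightness. To pass the martingale property to the limit, use Lemma \ref{lem: X is UI}(2), which gives $\sup_n\E[\|\hat X^{\ell n}\|_t^2]\le ct$, hence $L^2$-boundedness and uniform integrability on compacts. Then for any $0\le s_1<\cdots<s_k\le s<t$ and any bounded continuous $\varphi:\R^{Lk}\to\R$, the prelimit martingale identity
\[
\E\Big[(\hat X^{\ell n}_t-\hat X^{\ell n}_s)\,\varphi\big((\hat X^{\ell' n}_{s_i},\hat Y^{\ell' n}_{s_i})_{i,\ell'}\big)\Big]=0
\]
holds because $\hat X^{\ell n}$ is a $\{\calG^n_t\}$-martingale and the random vector in $\varphi$ is $\calG^n_s$-measurable. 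Passing to the limit using UI yields the analogous identity for $\sfX^\ell$, which characterizes the $\bar\calF_t$-martingale property.

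The main work is Property 4. For $[\sfX^\ell,\sfX^{\ell'}]=0$ when $\ell\ne\ell'$, use Lemma \ref{lem:123}(2): $\gamma^{\ell\ell' n}=[\hat X^{\ell n},\hat X^{\ell' n}]\to 0$. Combined with the identification argument just described applied to the martingales $\hat X^{\ell n}\hat X^{\ell' n}-\gamma^{\ell\ell' n}$ (here one needs $L^2$-control of the product, which follows from the fourth moment bound Assumption \ref{ass:Pi}(1) via Burkholder), the limit $\sfX^\ell\sfX^{\ell'}$ is a martingale, so $[\sfX^\ell,\sfX^{\ell'}]=0$. To identify $\sfU^\ell$ with $[\sfX^\ell]$, note that Lemma \ref{lem:123}(2) gives $\psi^{\ell n}\to 0$, where $\psi^{\ell n}_t=\tfrac12[\hat X^{\ell n}]_t-\tfrac1n\sum_{k=1}^{nt}\bq_k^{\ell n}$; thus along the subsequence, $\tfrac12\sfU^\ell_t$ equals the limit of $\tfrac1n\sum_{k=1}^{nt}\bq^{\ell n}_k$. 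Separately, $(\hat X^{\ell n})^2-[\hat X^{\ell n}]$ is a martingale; using Burkholder with the fourth moment bound, $\sup_n\E[\|\hat X^{\ell n}\|_t^4]<\infty$, so again by the limit-of-martingales argument, $(\sfX^\ell)^2-\sfU^\ell$ is an $\bar\calF_t$-martingale, which identifies $\sfU^\ell=[\sfX^\ell]$.

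Finally, for the convex-hull inclusion, observe that $\hat Y^{\ell n}_t-\hat Y^{\ell n}_s=\tfrac1n\sum_{k=\lfloor ns\rfloor+1}^{\lfloor nt\rfloor}\bb^{\ell n}_k$ and, by the reasoning above, $\tfrac12([\hat X^{\ell n}]_t-[\hat X^{\ell n}]_s)=\tfrac1n\sum_{k=\lfloor ns\rfloor+1}^{\lfloor nt\rfloor}\bq^{\ell n}_k+o_{\PP}(1)$. Since $(\bb^{\ell n}_k,\bq^{\ell n}_k)\in K^{\ell n}$ by \eqref{33} and the definition of $K^{\ell n}$, the divided differences
\[
\bigg(\frac{\hat Y^{\ell n}_t-\hat Y^{\ell n}_s}{t-s},\ \frac{[\hat X^{\ell n}]_t-[\hat X^{\ell n}]_s}{2(t-s)}\bigg)
\]
are (up to $o_\PP(1)$) convex combinations of elements of $K^{\ell n}$, hence lie in $\mathrm{ch}(K^{\ell n})$. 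Taking $n\to\infty$ along the subsequence, and using $K^{\ell n}\to K^\ell$ in $d_{\rm H}$ (hence $\mathrm{ch}(K^{\ell n})\to\mathrm{ch}(K^\ell)$), the limit pair lies in $\mathrm{ch}(K^\ell)$ for every $s<t$ a.s. The main obstacle is the identification $\sfU^\ell=[\sfX^\ell]$, which requires the fourth-moment control to justify UI of squared martingales when passing to the limit; everything else reduces to continuity of $\Gam$ and to the standard closed-under-weak-convergence-plus-UI argument for martingale properties.
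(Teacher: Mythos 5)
Your proposal is correct and follows the paper's overall structure (Properties 2 and 3 via continuity of $\Gam$, Property 1 via cylinder-function martingale identities plus uniform integrability, and the convex-hull part of Property 4 via averages of $(\bb^{\ell n}_k,\bq^{\ell n}_k)$ together with Lemma \ref{lem:123} and Hausdorff convergence of ${\rm ch}(K^{\ell n})$), but it takes a genuinely different route for the key identification $\sfU^\ell=[\sfX^\ell]$ and $[\sfX^\ell,\sfX^{\ell'}]=0$. The paper gets this in one stroke by verifying the jump-control hypothesis and citing \cite[Corollary VI.6.30]{jacod2013}, which yields $(\hat X^{\ell n},[\hat X^{\ell n}])\To(\sfX^\ell,[\sfX^\ell])$ directly, and then kills the cross brackets with Lemma \ref{lem:123}(2); you instead run the classical martingale-problem argument, passing the prelimit martingales $(\hat X^{\ell n})^2-[\hat X^{\ell n}]$ and $\hat X^{\ell n}\hat X^{\ell' n}-\gamma^{\ell\ell' n}$ to the limit using fourth-moment/Burkholder uniform integrability and then invoking uniqueness of the bracket. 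Your route is more self-contained (no appeal to the semimartingale convergence theory of Jacod--Shiryaev, only to moment bounds already available from Assumption \ref{ass:Pi}(1)), at the cost of one extra point you should make explicit: to conclude $\sfU^\ell=[\sfX^\ell]$ from uniqueness of the quadratic variation you need $\sfU^\ell$ to be adapted to the filtration with respect to which $(\sfX^\ell)^2-\sfU^\ell$ is a martingale, which is not automatic for $\bar\calF_t=\sig\{\sfX^{\ell'}_s,\sfY^{\ell'}_s\}$; the standard fix is to include the components $[\hat X^{\ell n}]$ (and hence $\sfU^{\ell}$) in the cylinder test functions and work with the correspondingly enlarged filtration, which is legitimate since $[\hat X^{\ell n}]_s$ is $\calG^n_s$-measurable, and under which $\sfX^\ell$ is still a martingale by the same limiting argument. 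With that adjustment (and the routine remark that the a.s.\ convex-hull inclusion for each fixed pair $s<t$ extends to all $s<t$ simultaneously by continuity, as the paper notes), your proof is complete and equivalent in strength to the paper's.
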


\begin{proof}
	Property 2 of the MCP follows directly from the definition of the filtration.
	
	For Property 3, recall \eqref{eq:W tot}.
	By the weak convergence of $(\hat X^{\ell n},\hat Y^{\ell n})_\ell$,
	along the subsequence that has been fixed,
	the continuity of the sample paths of $(\sfX^\ell,\sfY^\ell)_\ell$,
	and the convergence of $\hat e^n$ to zero, we have
	that
	$w+\hat X^{\tot,n}+\hat Y^{\tot,n}+\hat e^n\To\sfZ:=w+\sf X^\tot+\sfY^\tot$.
	Since $\Gam$ is a continuous mapping,
	we obtain $(\sfW^\tot,\sfR)=\Gam(\sfZ)$.
	
	For Property 4, first we shall prove the following statement:
	\begin{equation}\label{308}
	[\sfX^\ell,\sfX^{\ell'}]=0, \quad \ell\ne\ell',\quad \text{and} \quad [\sfX^\ell]=\sfU^\ell.
	\end{equation}
To this end we invoke \cite[Corollary VI.6.30]{jacod2013}.
To do this, note that
\begin{align*}
	\E[\|\Del \hat{X}^{\ell n}\|_t]^2 &\leq n^{-1}\E\bigg[\max_{1\leq k\leq \lt\lfloor nt\rt\rfloor}|\bar{J}_k^{\ell n}|\bigg]^2
\\
	&\le n^{-1}\E\bigg[\max_{1\leq k\leq \floornt}(\bar{J}_k^{\ell n})^2\bigg]\leq n^{-1}\E\bigg[\sum_{k=1}^{nt}(\bar{J}_k^{\ell n})^2\bigg]\leq ct,
\end{align*}
where $c$ does not depend on $n$.
Hence the assumptions of \cite[Corollary VI.6.30]{jacod2013}
are verified and it follows that
$(\hat{X}^n,[\hat{X}^n])\To(\sfX,[\sfX])$. Hence we obtain the identity $[\sfX^\ell]=\sfU^\ell$.
	By Lemma \ref{lem:123}.2, $[\hat X^{\ell n},\hat X^{\ell' n}]\to0$ in probability
	for $\ell\ne\ell'$. Therefore we have $[\sfX^\ell,\sfX^{\ell'}]=0$, $\ell\ne\ell'$.
	This shows \eqref{308}.
	
	The first part of Property 4 of the MCP follows.
	As for its second part, recall the definition of the processes $\bb_k^{\ell n}$ and $\bsigma_k^{\ell n}$ from \eqref{33} and the definition of $Y_t^{\ell n}$  in \eqref{32+}.
	Then
	\begin{align*}
		\Big(\frac{\hat{Y}^{\ell n}_t-\hat{Y}^{\ell n}_s}{t-s},\frac{[\hat{X}^{\ell n}]_t-[\hat{X}^{\ell n}]_s}{2(t-s)}\Big)-\frac{1}{nt-ns}\sum_{k=\floor{ns}+1}^{nt}(\bb_k^{\ell n},\bq_k^{\ell n})=\bigg(0,\frac{1}{t-s}\sum_{k=\floor{ns}+1}^{nt}\frac{\frac{1}{2}(\bar{J}_k^{\ell n})^2-\bq_k^{\ell n}}{n}\bigg).
	\end{align*}
	The RHS converges to 0 in probability by Lemma \ref{lem:123}.2.
	It follows from \eqref{308} that $(\hat{Y}^{\ell n},[\hat{X}^{\ell n}])\To(\sfY^\ell,[\sfX^\ell])$.
	Therefore the sequence $(nt-ns)^{-1}\sum_{k=\floor{ns}+1}^{nt}(\bb_k^{\ell n},\bq_k^{\ell n})$ has a limit,
	and it remains to show that this limit is in ${\rm ch}(K^\ell)$.
	To show this, note that $(\bb_k^{\ell n},\bq_k^{\ell n})\in K^{\ell n}$. Therefore, for $n>(t-s)^{-1}$,
	\begin{align}\label{34}
		\tfrac{1}{\floor{nt}-\floor{ns}-1}\sum_{k=\floor{ns}+1}^{nt}(\bb_k^{\ell n},\bq_k^{\ell n})\in\text{ch}(K^{\ell n}),
	\end{align}
	as a convex combination of members of $K^{\ell,n}$.
	We have assumed that $K^{\ell n}\to K^\ell$ in $d_{\rm H}$.
	By the definition of $d_{\rm H}$, it is elementary to show
	that this implies ${\rm ch}(K^{\ell n})\to {\rm ch}(K^\ell)$ in $d_{\rm H}$.
	It is also elementary that the limit of a convergent sequence of members of ${\rm ch}(K^{\ell n})$ must lie in ${\rm ch}(K^\ell)$. We omit the details. As a result, for every $s<t$,
	the limit of \eqref{34}, given by $(t-s)^{-1}(\sfY^\ell_t-\sfY^\ell_s,([\sfX^\ell]_t-[\sfX^\ell]_s)/2)$, must lie in ${\rm ch}(K^\ell)$, $\PP$-a.s. Finally, since the process is continuous, this claim holds true simultaneously for all $s<t$, $\PP$-a.s.
	
	Finally, to prove Property 1, i.e.\ that $\sfX$ is a $\bar\calF$-martingale,
	note that it is adapted and, by Fatou's lemma and Lemma \ref{lem: X is UI},
	$\E[|\sfX^\ell_t|]\leq\liminf_{n\to\infty}\E[|\hat{X}^{\ell n}_t|]<\infty$.
	It remains to show that for every $t\ge0$, $u>0$ and $A\in\bar\calF_t$,
	\begin{align}\label{b4}
		\E[\sfX^\ell_{t+u}\mathbbm{1}_A]=\E[\sfX_{t}^\ell\mathbbm{1}_A].
	\end{align}
	To this end we first show that for any $k\in\N$ and any continuous bounded function $g_k:\R^{2k}\to\R$,
	\begin{align}\label{eq:X mart}
		\E\big[\sfX^\ell_{t+u}g_k(\rho_{t_1\ldots t_k}(\sfX^\ell,\sfY^\ell))\big]=\E\big[\sfX^\ell_tg_k(\rho_{t_1\ldots t_k}(\sfX^\ell,\sfY^\ell))\big],
	\end{align}
	where $\rho_{t_1\ldots t_k}$ is the natural projection from $\calD^2[0,\infty)$ to $\R^{2k}$ at  $0\leq t_1\leq\cdots\leq t_k\leq t$.
	Indeed, by the martingale property of $\hat{X}^{\ell n}$,
	\begin{align*}
		\E\big[\hat{X}^{\ell n}_{t+u}g_k(\rho_{t_1\ldots t_k}(\hat{X}^{\ell n},\hat{Y}^{\ell n}))\big]=\E\big[\hat{X}^{\ell n}_{t}g_k(\rho_{t_1\ldots t_k}(\hat{X}^{\ell n},\hat{Y}^{\ell n}))\big].
	\end{align*}
	Hence \eqref{eq:X mart} follows by
	the weak convergence and uniform integrability of $\hat{X}^{\ell n}$.
Next we argue by the monotone class theorem.
	Define
	\begin{align*}
		\calK&=\{g_k\circ\rho_{t_1\ldots t_k}(\sfX^\ell,\sfY^\ell):\Om\to\R:g_k\text{ is bounded and continuous},k\in\N,0\leq t_1\leq...\leq t_k\leq t\},\\
		\calH&=\{f:\Om\to\R:f\text{ is $\bar\calF_t$-measurable, }\E[\sfX^\ell_{t+u}f]=\E[\sfX^\ell_{t}f]\}.
	\end{align*}
	Then $\calK$ is closed under multiplications, $\calH$ is a vector subspace, and
	$\calK\subset\calH$ according to \eqref{eq:X mart}.
Moreover, the constant function $1$ is in $\calH$ since $\E[\sfX^\ell_{t+u}]=\E[\sfX^\ell_t]$
by \eqref{eq:X mart}.
Finally,
the intersection of $\calH$ with the set of non-negative functions is closed under bounded increasing limits, by the monotone convergence.
Therefore $\calH$ contains all bounded functions that are measurable w.r.t.\ $\sigma(\calK)$, but $\sigma(\calK)=\bar \calF_t$.This shows \eqref{b4} and completes the proof.
\end{proof}

We can now complete the proof of \eqref{103}.
By Proposition \ref{pr:upper bnd}, the tuple $\calS$ satisfies
$\calS\in\frS_0$. Hence
$\sfC(\calS)\leq\sup_{\calS'\in\frS_0}\sfC(\calS')=\sfV(0)$.
Thus the proof of \eqref{103} will be complete once it is shown that,
along the subsequence that has been fixed, one has
$\lim_n C(\beta^{*n}(a^n),a^n)=\sfC(\calS)$.
To this end, note that by assumption, $\hat W^{\tot,n}\To\sfW^\tot$
along this subsequence. Hence, along the same subsequence,
by Lemma \ref{lem:W To 0},
\[
	(\hat W^{1n},\hat W^{2n},...,\hat W^{Ln})\To\brac{\sfW^\tot,0,...,0},
\]
and by Lemma \ref{lem:Q-W to 0},
\[
	(\hat Q^{1n},\hat Q^{2n},...,\hat Q^{Ln})\To\brac{\mu^1\sfW^\tot,0,...,0}.
\]
Hence by the normalization $h^1\mu^1=1$, we have
$h\cdot\hat Q^n\To\sfW^\tot$.
To deduce from this convergence of the expected integrals, we appeal to uniform
integrability.
Consider the probability measure $d\PP\times e^{-t}dt$ on the product space
$(\Om\times\R_+,\calF\otimes\calR_+)$. W.r.t.\ this measure,
$h\cdot\hat Q^n$ are uniformly integrable as RVs on
the product space, as follows directly from Lemma \ref{lem:W,Q UI}.
As a result, along the subsequence,
\begin{align}\label{756}
\lim_n C(\beta^{*n}(a^n),a^n)=
	\lim_{n\to\infty}\expect{\int_0^\infty e^{-t}h\cdot\hat Q_t^ndt}
	=\expect{\int_0^\infty e^{-t}\sfW_t^\tot dt}=\sfC(\calS).
\end{align}
This completes the proof of \eqref{103}. The upper bound \eqref{101} follows.
\qed

\section{The lower bound}
\label{sec:ao}

The objective here is to prove
\begin{align}\label{eq:LB}
	\uu{V}:=\liminf_{n\to\infty}V^n\geq\sfV(0)
\end{align}
and then complete the proof of Theorem \ref{thm:Vn to V}.
To this end, we construct a sequence $\{a^{*n}\}$ that satisfies
\begin{equation}\label{58}
\sfV(0)\leq\liminf_{n\to\infty}C(\beta^{*n}(a^{*n}),a^{*n}).
\end{equation}
This suffices in order to prove that $\lim_nV^n=\sfV(0)$
because by Theorem \ref{thm:cmu is opt},
\begin{align*}
	\liminf_{n\to\infty}C(\beta^{*n}(a^{*n}),a^{*n})=\liminf_{n\to\infty}\inf_{\beta\in\B^n}C(\beta(a^{*n}),a^{*n})\leq\liminf_{n\to\infty}\inf_{\beta\in\B^n}\sup_{a\in\calA^n}C(\beta(a),a)=\uu{V}.
\end{align*}
Toward showing \eqref{58}, recall that
by Lemmas \ref{lem:Q-W to 0} and \ref{lem:W To 0}
and the choice $h^1\mu^1=1$,
when the SC uses strategy $\beta^{*n}$, one has
that $\hat Q^{\ell n}\to0$
for $2\leq \ell\leq L$ and $h\cdot\hat Q^n-\hat W^{\tot,n}\to0$
in probability.
Using this along with
a uniform integrability argument as in \eqref{756} yields that,
under $\{\beta^{*n}\}$ and any control sequence $\{a^{n}\}$,
\[
\liminf_{n\to\infty}C(\beta^{*n}(a^{n}),a^{n})=
\liminf_{n\to\infty}\expect{\int_0^\infty e^{-t}h\cdot \hat{Q}^n_tdt}=\liminf_{n\to\infty}\expect{\int_0^\infty e^{-t}\hat{W}^{\tot,n}_tdt}.
\]
Therefore, in order to prove \eqref{58} (hence \eqref{eq:LB}) it suffices to find a control sequence $\{a^{*n}\}$ such that under $(a^{*n},\beta^{*n})$
one has
\begin{equation}\label{a5}
\uu{U}:=\liminf_{n\to\infty}\expect{\int_0^\infty e^{-t}\hat{W}^{\tot,n}_tdt}\geq\sfV(0).
\end{equation}
We will now show that \eqref{a5} holds
under a suitable $\{a^{*n}\}$; in particular, we will assume
throughout what follows that the strategy $\beta^{*n}$ is used.

Again, in this proof, the label `tot' is suppressed.
Recall the expression given in \eqref{eq:W tot} for $(\hat W^n,\hat R^n)$.
Here it is more convenient to work with pure jump processes and
define  $(\tilde{W}^n,\tilde{R}^n)=\Gam(\hat{X}^n+\hat{Y}^n)$.
Because of the Lipschitz property of $\Gam$,
we have only a small error between
$(\tilde{W}^n,\tilde{R}^n)$ and $(\hat W^n,\hat R^n)$.
Specifically, for all $t$,
\begin{equation}\label{a6}
\|\tilde{W}^n-\hat W^n\|_t+\|\tilde{R}^n-\hat R^n\|_t
\le c\|\hat e^n\|_t\le cn^{-1/2}.
\end{equation}
Thus $\tilde{W}^n,\tilde{R}^n,\hat{X}^n,\hat{Y}^n$
are pure jump processes starting at $0$,
and $\tilde{W}^n=\hat{X}^n+\hat{Y}^n+\tilde{R}^n$.
Moreover, the jump sizes are
given by e.g.,
$\Del\tilde{W}_{k/n}^n=\tilde{W}^n_{k/n}-\tilde{W}^n_{(k-1)/n}$, $k\ge1$.
Denote $\ttW_k^n=\tilde{W}_{k/n}$ and $\ttR_k^n=\tilde{R}_{k/n}^n$.
We shall use the fact that, by the definition of $(\tilde W^n,\tilde R^n)$,
(and with $\bar J_k^{\tot,n}$,
$\bb_k^{\tot,n}$ abbreviated as $\bar J_k^n$, $\bb_k^n$),
\begin{equation}\label{a2}
\Del\ttW_k^n=\Del\hat X^n_{k/n}+\Del\hat Y^n_{k/n}+\Del\ttR^n_k
=n^{-1/2}\bar J_k^n+n^{-1}\bb_k^n+\Del\ttR^n_k.
\end{equation}

To construct the sequence $a^{*n}$, we invoke again a measurable selection
argument.
Let $\ph=\ph^{\ell n}:\R\times\PI^{\ell,n}\to\R$ be defined by
\begin{equation}\label{b1}
\ph(w,\pi)=u'(w)b^{\pi\ell n}+u''(w)q^\pi, \qquad w\in\R_+.
\end{equation}
Using Assumption \ref{ass:Pi}.1, this function is continuous in both variables, and since
$\PI^{\ell,n}$ is compact, $\ph^{(m)}:=\ph|_{[0,m]\times\PI^{\ell n}}$ is bounded for any $m\in\N$.
Hence by
\cite[Lemma 8.10]{budhiraja19}, there exists for each $m$ a measurable function
$\psi^{(m)}:[0,m]\to\PI^{\ell n}$ such that if $w\in[0,m]$ and $\pi=\psi^{(m)}(w)$,
\[
\ph^{(m)}(w,\pi)=\sup\{\ph^{(m)}(w,\pi'):\pi'\in\PI^{\ell n}\}.
\]
Next, a single measurable function $\psi=\psi^{\ell n}:\R_+\to\PI^{\ell n}$
can be extracted by letting
$\psi(w,\pi)=\psi^{(m)}(w,\pi)$ when $w\in[m-1,m)$. This way, we have similarly that
if $w\in\R_+$ and $\pi=\psi(w)$,
\begin{equation}\label{b2}
\ph(w,\pi)=\sup\{\ph(w,\pi'):\pi'\in\PI^{\ell n}\}.
\end{equation}

Using this measurable function, we define, for every $n,\ell$,
\begin{equation}\label{b3}
\pi^{\ell n}_k=\psi^{\ell n}(\ttW^n_{k-1}),\qquad k\ge1.
\end{equation}
This gives
\begin{equation}\label{eq:prelimit policy}
u'(\ttW_{k-1}^n)b^{\pi^{\ell n}_k,\ell n}+u''(\ttW^n_{k-1})q^{\pi^{\ell n}_k}
=
\max\{u'(\ttW^n_{k-1})b^{\pi\ell n}+u''(\ttW^n_{k-1})q^{\pi}:\pi\in\PI^{\ell n}\}.
\end{equation}
Once $\pi^{\ell n}_k$ is selected as above for all $\ell$, the tuple $(J^{\ell n}_k)_{\ell\in[L]}$
is selected according to \eqref{22}, and this
defines a control process that we shall denote $a^{*n}$.
The proof of the first part of the theorem
will be complete once we show that this control satisfies \eqref{58}.

To this end, fix $t>0$ and $x>0$ and define the $\{\calG_s^n\}$-stopping times
\begin{align*}
    \tilde\tau=\tilde\tau(x,n)=\inf\{s>0:\tilde{W}^n_s>x\},
    \quad \tau=\tau(x,t,n)=\tfrac{\floornt}{n}\land\tilde\tau(x,n).
\end{align*}
Since $\tilde W^n$ is piecewise constant with jumps at times that are multiples
of $n^{-1}$, $n\tau$ takes values in $\N\cup\{\iy\}$.
In addition, $n\tau$ is an $\{\calF_k^n\}$ stopping time since
$\{n\tau\leq k\}=\{\tau\leq\tfrac{k}{n}\}\in\calG^n_{k/n}=\calF^n_k$.
Applying Taylor expansion with the Lagrange remainder, there exists $\zeta_k^n$ taking a value between $\ttW^n_k$ and $\ttW^n_{k-1}$ such that 
\begin{align*}
    e^{-\tau}u(\tilde{W}^n_{\tau})&=u(0)-\int_0^{\tau}e^{-s}u(\tilde{W}^n_{s-})ds+\sum_{k=1}^{ n\tau}e^{-\frac{k}{n}}\brac{u\brac{\ttW^n_{k}}-u\brac{\ttW^n_{k-1}}}\nonumber\\
    &=u(0)-\int_0^{\tau}e^{-s}u(\tilde{W}^n_{s-})ds+\sum_{k=1}^{ n\tau}e^{-\frac{k}{n}}\Big(\brac{\ttW^n_k-\ttW^n_{k-1}}u'\brac{\ttW^n_{k-1}}+\frac{1}{2}\brac{\ttW^n_{k}-\ttW^n_{k-1}}^2u''\brac{\zeta_k^n}\Big)\\
    &=u(0)-\int_0^{\tau}e^{-s}u(\tilde{W}^n_{s-})ds\\
    &\quad+\sum_{k=1}^{ n\tau}e^{-\frac{k}{n}}\Big(\brac{\ttW^n_{k}-\ttW^n_{k-1}}u'\brac{\ttW^n_{k-1}}+\frac{1}{2}\brac{\ttW^n_{k}-\ttW^n_{k-1}}^2u''\brac{\ttW^n_{k-1}}\Big)+\eps^n_1(\tau)
\end{align*}
with
\begin{align*}
    \eps_1^n=\frac{1}{2}\sum_{k=1}^{n\tau}e^{-k/n}\brac{u''\brac{\zeta_k^n}-u''\brac{\ttW^n_{k-1}}}\brac{\ttW^n_{k-1}-\ttW^n_{k}}^2.
\end{align*}
Using \eqref{a2},
\begin{align}\label{eq:prelimit Taylor}
    e^{-\tau}u(\tilde{W}^n_{\tau})=u(0)-\int_0^{\tau}e^{-s}u(\tilde{W}^n_{s-})ds+\sum_{k=1}^{ n\tau}\frac{1}{n}e^{-\frac{k}{n}}T_k^n+\sum_{k=1}^3\eps^n_i
\end{align}
with
\begin{align*}
&T^n_k=\bb^n_ku'\brac{\ttW^n_{k-1}}+\frac{1}{2}(\bar{J}^n_k)^2u''\brac{\ttW^n_{k-1}},
\\
    &\eps^n_2=\sum_{k=1}^{n\tau}e^{-\frac{k}{n}}\brac{\frac{\bar{J}^n_k}{\sqrt{n}}\rho_k^n+\frac{1}{2}(\rho^n_k)^2}u''\brac{\ttW^n_{k-1}},
    &\rho_k^n=\frac{\bb^n_k}{n}+\Del\ttR^n_k,\\
    &\eps^n_3=\sum_{k=1}^{n\tau}e^{-\frac{k}{n}}\brac{\frac{\bar{J}^n_k}{\sqrt{n}}+\Del\ttR^n_k}u'\brac{\ttW^n_{k-1}}.
\end{align*}

Consider the $k$th term in the first sum in \eqref{eq:prelimit Taylor}.
Denote $H_k=\indicator{k-1<n\tau}$.
Note that $\bb^n_k$ and $\ttW^n_{k-1}$ are $\calF^n_{k-1}$-measurable and that $\E[(\bar{J}^n_k)^2|\calF^n_{k-1}]=\bq_k^{\tot,n}=:\bq_k$ due to the independence of $(J^{\ell n}_k)_\ell$ conditioned on $\calF^n_{k-1}$.
Hence by \eqref{eq:prelimit policy} and \eqref{eq:HJB},
\begin{align*}
    \expect{T^n_kH_k}
    &=\expect{\brac{\bb^n_ku'\brac{\ttW^n_{k-1}}+\frac{1}{2}\E\big[(\bar{J}_k^n)^2\big|\calF^n_{k-1}\big]u''\brac{\ttW^n_{k-1}}}H_k}\\
    &=\expect{\brac{\bb^n_ku'\brac{\ttW^n_{k-1}}+\bq^n_ku''\brac{\ttW^n_{k-1}}}H_k}\\
    &=\E\bigg[\sum_{\ell=1}^L\brac{\bb_k^{\ell n}u'\brac{\ttW^n_{k-1}}+\bq_k^{\ell n} u''\brac{\ttW^n_{k-1}}}H_k\bigg]\\
    &=\E\bigg[\sum_{\ell=1}^L\max_{\pi\in\PI^\ell}\lt\{b^{\pi}u'\brac{\ttW^n_{k-1}}+q^\pi u''\brac{\ttW^n_{k-1}}\rt\}H_k\bigg]\\
    &=\E\bigg[\sum_{\ell=1}^L\max_{(b^\ell,q^\ell)\in K^{\ell n}}\lt\{b^{\ell}u'\brac{\ttW^n_{k-1}}+q^{\ell} u''\brac{\ttW^n_{k-1}}\rt\}H_k\bigg].
\end{align*}
Toward using the HJB equation we need to address the discrepancy between the maximization over $K^{\ell n}$ and over $K^\ell$.
To this end we use the following fact, which follows directly from the assumption that $K^{\ell n}\to K^\ell$ in $d_{\rm H}$ and the compactness of $K^{\ell n}$ and $K^\ell$.
Namely, for any compact $C\subset\R^2$,
	\begin{align*}
		\lim_n\sup_{\vc{x}\in C}|\max_{\vc{y}\in K^{\ell n}}\vc{y}\cdot\vc{x}-\max_{\mathbf{y}\in K^\ell}\mathbf{y}\cdot\mathbf{x}|=0.
	\end{align*}
Moreover, by \cite[Theorem 32.2]{rockafellar1970}
$\max_{\mathbf{y}\in K^\ell}\mathbf{y}\cdot\mathbf{x}=
\max_{\mathbf{y}\in {\rm ch}(K^\ell)}\mathbf{y}\cdot\mathbf{x}$.
We apply these two facts with
\[
C=[0,\|u'\|_x]\times[0,\|u''\|_x],
\]
$\vc{x}=(u'(\ttW^n_{k-1}),u''(\ttW^n_{k-1}))$ and $\vc{y}=(b^\ell,q^\ell)$.
Then for all $\ell\in[L]$
\begin{align*}
	\max_{(b^\ell,q^\ell)\in K^{\ell n}}\lt\{b^{\ell}u'\brac{\ttW^n_{k-1}}+q^{\ell} u''\brac{\ttW^n_{k-1}}\rt\}=\max_{(b^\ell,q^\ell)\in {\rm ch}(K^{\ell})}\lt\{b^{\ell}u'\brac{\ttW^n_{k-1}}+q^{\ell} u''\brac{\ttW^n_{k-1}}\rt\}+\tilde \eps_{4,k}^{\ell n},
\end{align*}
where $|\tilde \eps_{4,k}^{\ell n}|\le\tilde \eps^n_4$ and $\tilde \eps_4^n$ is a deterministic sequence
(which depends on $x$ but not on $k$ or $\ell$) converging to zero as $n\to\iy$.
The above, together with the fact that $u$ solves \eqref{eq:HJB}, results in 
\begin{align*}
	\E\Big[\sum_{k=1}^{ n\tau}\frac{1}{n}e^{-\frac{k}{n}}T_k^n\Big]&=\E\Big[\sum_{k=1}^\iy\frac{e^{-k/n}}{n}T^n_kH_k\Big]\\
	&=\sum_{k=1}^\iy\frac{e^{-k/n}}{n}\E\bigg[\bigg(\sum_{\ell=1}^L\max_{(b^\ell,q^\ell)\in {\rm ch}(K^{\ell})}\lt\{b^{\ell}u'\brac{\ttW^n_{k-1}}+q^{\ell n} u''\brac{\ttW^n_{k-1}}\rt\}+\tilde \eps_{4,k}^{\ell n}\bigg) H_k\bigg]\\
	&=\E\Big[\sum_{k=1}^\iy\frac{e^{-k/n}}{n}\brac{u\brac{\ttW_{k-1}}-\ttW_{k-1}}H_k\Big]+\E[\eps_{4}]\\
	&=\E\Big[\sum_{k=1}^{n\tau}\frac{e^{-k/n}}{n}\brac{u\brac{\ttW_{k-1}}-\ttW_{k-1}}\Big]+\E[\eps_4],
\end{align*}
with $\eps_4^n=\sum_{k=1}^{n\tau}n^{-1}e^{-k/n}\tilde\eps_{4k}^{\tot,n}$.
Using the bound on $\tilde\eps^{\ell n}_{4,k}$,
\[
|\eps_4^n|\le n^{-1}ntL\tilde\eps_4^n=Lt\tilde\eps^n_4.
\]
This shows that
$\E[|\eps^n_4|]\leq tL\tilde \eps^n_4\to 0$ as $n\to\iy$.
Taking expectation in \eqref{eq:prelimit Taylor} gives
\begin{align}
\notag
    \E\big[e^{-\tau}u(\tilde{W}^n_\tau)\big]&=u(0)+\E\bigg[-\int_0^{\tau}e^{-s}u(\tilde{W}^n_{s})ds+\sum_{k=1}^{n\tau}\frac{1}{n}e^{-\frac{k}{n}}\brac{u\brac{\ttW^n_{k-1}}-\ttW^n_{k-1}}+\sum_{i=1}^4\eps^n_i\bigg]
    \\
    &=
      u(0)+\E\bigg[-\int_0^{\tau}e^{-s}\tilde{W}^n_sds+\sum_{i=1}^6\eps^n_i\bigg],
\label{eq:prelimit Taylor 2}
\end{align}
where
\begin{align*}
    \eps_5^n=&\frac{1}{n}\sum_{k=1}^{ n\tau}e^{-k/n}u\brac{\ttW^n_{k-1}}-\int_0^{\tau}e^{-s}u(\tilde{W}^n_{s})ds,\\
    \eps_6^n=&-\frac{1}{n}\sum_{k=1}^{ n\tau}e^{-k/n}\ttW^n_{k-1}+\int_0^{\tau}e^{-s}\tilde{W}^n_{s}ds.
\end{align*}
The next step is to show that $\E[\eps^n_i]\to0$ for $1\le i\le 6$, where
the case $i=4$ has already been addressed.

For $\eps_2^n$, let $\calK_1=\calK_1^n=\{k\in[nt]:\Del\ttR^n_k>0\}$ and sum
separately over $k\in\calK_1$ and over $k\in\calK_1^c$.
For $k\in\calK_1^c$, we have $|\rho^n_k|\leq c/n$.
For $k\in\calK_1$, recall the definition of $\bxi^n$ and observe that since $\Del\ttR^n_k>0$ only if $\ttW^n_k=0$, one has by \eqref{a2},
\begin{align}\label{eq:DR>0}
    0<\Del\ttR^n_{k}=-\frac{1}{\sqrt{n}}\bar{J}^n_k-\frac{\bb^n_k}{n}-\ttW^n_{k-1}\leq-\frac{J^n_k-\bxi^n_k}{\sqrt{n}}-\frac{\bb^n_k}{n}=-\frac{J^n_k}{\sqrt{n}}+\frac{1}{\sqrt{n}},
\end{align}
where the last equality follows from the relation between $\bxi^n_k$ and $\bb^n_k$
expressed in Assumption \ref{ass:Pi}(2) and Assumption \ref{assn:ht} by which
$1/\mu^\ell$ sum to 1.
Therefore, in these times,
\begin{equation}\label{a3}
\text{$0\le J^n_k\leq 1$ and $\Del\ttR^n_k\leq n^{-1/2}$}.
\end{equation}
This implies
\begin{align*}
    |\rho_k^n|=\Big|\Del\ttR^n_k+\frac{\bb^n_k}{n}\Big|\leq\frac{2}{\sqrt{n}},
\end{align*}
for large $n$.
Note that \eqref{eq:DR>0} also implies $\ttW^n_{k-1}\leq 2n^{-1/2}$ for $k\in\calK_1$.
From these bounds we obtain 
\begin{align*}
    &\Big|\frac{\bar{J}^n_k}{\sqrt{n}}+\frac{1}{2}\rho^n_k\Big|\leq\frac{3}{\sqrt{n}},\quad k\in\calK_1.
\end{align*}
Using these bounds in the definition of $\eps_2^n$,
\begin{align*}
    |\eps^n_2|=&\abs{\sum_{k=1}^{n\tau}e^{-\frac{k}{n}}\brac{\frac{\bar{J}^n_k}{\sqrt{n}}\rho^n_k+\frac{1}{2}(\rho^n_k)^2}u''\brac{\ttW^n_{k-1}}}\leq \norm{u''}_x\bigg(\sum_{k\in\calK_1}\frac{3}{\sqrt{n}}\abs{\rho^n_k}+\frac{c}{n}\sum_{k\in\calK_1^c}\bigg(\frac{\abs{\bar{J}^n_k}}{\sqrt{n}}+\frac{c}{n}\bigg)\bigg).
\end{align*}
Taking expectation in the sum over $\calK_1^c$,
\begin{align*}
    \E\bigg[\frac{c}{n}\sum_{k\in\calK_1^c}\brac{\frac{\abs{J_k^n-\bxi^n_k}}{\sqrt{n}}+\frac{c}{n}}\bigg]&\leq\frac{c}{n}\sum_{k=1}^{nt}\frac{\expect{\abs{J^n_k-\bxi^n_k}}}{\sqrt{n}}+\frac{c}{n^2}nt\leq\brac{\frac{c}{n^{3/2}}+\frac{c}{n^2}}nt,
\end{align*}
which converges to zero with $n\to\infty$.
Next, for the sum over $\calK_1$, bound $|\rho_k^n|$ by $c/n+\Del\ttR^n_k$ and
proceed with
\begin{align*}
    &\E\bigg[\sum_{k\in\calK_1}\frac{3}{\sqrt{n}}\brac{\frac{c}{n}+\Del\ttR^n_k}\bigg]\\
    &\leq\sum_{k=1}^{nt}\frac{c}{n^{3/2}}+3\expect{\sum_{k=1}^{nt}\brac{\frac{\Del\ttR^n_{k}}{\sqrt{n}}\indicator{0<\Del\ttR^n_{k}<n^{-3/4}}+\frac{\Del\ttR^n_{k}}{\sqrt{n}}\indicator{n^{-3/4}<\Del\ttR^n_{k}}}}\\
    &\leq nt\frac{c}{n^{3/2}}+3\expect{\sum_{k=1}^{nt}\brac{\frac{n^{-3/4}}{\sqrt{n}}\indicator{0<\Del\ttR^n_{k}<n^{-3/4}}+\frac{2}{n}\indicator{n^{-3/4}<\Del\ttR^n_{k}}}}\\
    &\leq t\frac{c}{\sqrt{n}}+3n^{-1/4}t+3\frac{2}{n}\expect{\abs{\calK_2}},
\end{align*}
where $\calK_2=\calK_2^n=\lt\{k\in[nt]:n^{-3/4}<\Del\ttR^n_{k}\rt\}$.
Now,
\begin{align*}
    n^{-1}\expect{\abs{\calK_2}}\leq n^{-1}\E\bigg[\sum_{k\in\calK_2}\frac{\Del\ttR^n_{k}}{n^{-3/4}}\bigg]\le n^{-1/4}\E[\tilde{R}^n_t].
\end{align*}
Using Lemma \ref{lem: X is UI} and \eqref{eq:bnd on Y} and the Lipschitz property of the Skorokhod map, we bound this further by
\begin{align}\label{a4}
    \E[\tilde{R}^n_t]&\leq 2\E[\Vert\hat{X}^n\Vert_t+\Vert\hat{Y}^n\Vert_t]\leq c(\sqrt{t}+t).
\end{align}
This shows $\E\lt[\eps^n_2\rt]\to 0$ as $n\to\infty$.

We next treat $\eps_1^n$. By \eqref{a3}, $\Del\ttR^n_k\le n^{-1/2}$, and
Thus by \eqref{a2}
 $|\Del\ttW^n_k|\leq|\bar{J}^n_k|/\sqrt{n}+c/\sqrt{n}$.
\begin{align*}
    \abs{\eps_1^n}&\leq \frac{c}{n}\sum_{k=1}^{n\tau}\abs{u''\brac{\zeta^n_k}-u''\brac{\ttW^n_{k-1}}}U_k^n,
\end{align*}
where $U_k^n=\brac{|\bar J^n_k|+1}^2$.
Note that $|\zeta^n_k-\ttW^n_{k-1}|\le|\Del\ttW^n_k|$.
Letting $\delta^n=n^{1/3}$, we have
\begin{align}
|\eps^n_1|   &\le\frac{c}{n}\sum_{k=1}^{n\tau}\brac{w_x\brac{u'',|\Del\ttW^n_k|}\indicator{|\bar{J}^n_k|\leq \delta^n}+2\|u''\|_x\indicator{|\bar{J}^n_k|> \delta^n}}U^n_k\nonumber\\
    &\leq\frac{c}{n}\sum_{k=1}^{n\tau}w_x\brac{u'',\frac{\abs{\bar{J}^n_k}+c}{\sqrt{n}}}U^n_k\indicator{\abs{\bar{J}^n_k}\leq \delta^n}+\norm{u''}_x\frac{c}{n}\sum_{k=1}^{n\tau}U^n_k\indicator{\abs{\bar{J}^n_k}> \delta^n}\nonumber\\
    &\leq w_x\brac{u'',\frac{2\delta^n}{\sqrt{n}}}\frac{c}{n}\sum_{k=1}^{nt}U^n_k\label{eq:err bound1}\\
    &\quad+\norm{u''}_x\frac{c}{n}\sum_{k=1}^{nt}U^n_k\indicator{\abs{\bar{J}^n_k}> \delta^n}\label{eq:err bound2}.
\end{align}
By the continuity of $u''$ and the bound $\E[U_k^n]\le c$,
the expectation of the expression in \eqref{eq:err bound1} is bounded by
$ctw_x\brac{u'',2n^{-1/6}}$, which converges to $0$ as $n\to\iy$.
By Assumption \ref{ass:Pi}.\ref{it:4th moment}, $\E[(U_k^n)^2]\le c$.
Hence the expectation of the expression in \eqref{eq:err bound2} is bounded by
\begin{align*}
\frac{c}{n}\sum_{k=1}^{nt}\sqrt{\E[(U_k^n)^2]\prob{|\bar{J}^n_k|>\delta^n}}
\leq ct\max_{k\in[nt]}\sqrt{\frac{\E[(\bar{J}^n_k)^4]}{n^{4/3}}}\to 0,
\end{align*}
where once again we have used Assumption \ref{ass:Pi}.\ref{it:4th moment}.
This completes the proof that $\E[\eps^n_1]\to0$.

For $\eps^n_6$, 
recalling that $\tilde{W}^n$ is a pure jump process, we have
$\int_0^{\tau}e^{-s}\tilde{W}^n_{s}ds=\sum_{k=1}^{ n\tau}\ttW^n_{k-1}\psi_k^n$,
where we denote $\psi_k^n=\int_{(k-1)/n}^{k/n}e^{-s}ds$.
    Now,
    \begin{align*}
        -\eps_6=\frac{1}{n}\sum_{k=1}^{ n\tau}e^{-k/n}\ttW^n_{k-1}-\sum_{k=1}^{ n\tau}\ttW^n_{k-1}\psi_k^n&=\sum_{k=1}^{ n\tau}\ttW^n_{k-1}\brac{\frac{e^{-k/n}}{n}-\psi_k^n},
    \end{align*}
    while $0\le \psi_k^n-n^{-1}e^{-k/n}\leq n^{-1}(1-e^{-1/n})$.
    Therefore $\abs{\eps^n_6}\leq t\brac{1-e^{-1/n}}\max_{0\leq k\leq n\tau-1}\ttW^n_{k}\le xt(1-e^{-1/n})$, where we used
$\ttW^n_k\leq x$ for $k\in[n\tau-1]$.
Hence, $\E[\eps^n_6]\to 0$ as $n\to\iy$.
A similar argument holds for $\eps^n_5$.

We are left with $\eps^n_3$.
Using \eqref{eq:DR>0}, we have that
whenever $\Del\ttR^n_{k}>0$, $\ttW^n_{k-1}$ is bounded above
by $n^{-1/2}$.
Hence, by \eqref{a4},
\begin{align*}
    \bigg|\E\bigg[\sum_{k=1}^{n\tau}e^{-\frac{k}{n}}u'\brac{\ttW^n_{k-1}}\Del\ttR^n_{k}\bigg]\bigg|&\leq\norm{u'}_{n^{-1/2}}\E[\tilde{R}^n_t]\leq c\norm{u'}_{n^{-1/2}}\big(\sqrt{t}+t\big)\to 0,
\end{align*}
by continuity of $u'$ and $u'(0)=0$.
Moreover, using the fact that $n\tau$ is an $\{\calF^n_k\}$-stopping time and $\E[\bar J^n_k|\calF^n_{k-1}]=0$, 
\begin{align*}
    \E\bigg[u'\brac{\ttW^n_{k-1}}\indicator{k<n\tau+1}\bar{J}^n_k\bigg]&=\E\bigg[u'\brac{\ttW^n_{k-1}}\indicator{k<n\tau+1}\E\big[\bar{J}^n_k\big|\calF^n_{k-1}\big]\bigg]=0.
\end{align*}
The last two displays imply $\E[\eps^n_3]\to 0$ as $n\to\infty$.

Having shown that $\E[\eps^n_i]\to0$, $1\le i\le 6$, we have by
\eqref{eq:prelimit Taylor 2} that
\[
\liminf_{n\to\infty}\expect{\int_0^{\iy}e^{-s}\tilde{W}^n_sds}
\ge u(0)-\limsup_{n\to\infty}\expect{e^{-\tau}u(\tilde{W}^n_\tau)}.
\]
Hence by the definition of $\uu{U}$ in \eqref{a5} and the bound \eqref{a6},
we have
\begin{equation}
\label{eq:prelimit Taylor 3}
\uu{U}
\ge u(0)-\limsup_{n\to\infty}\expect{e^{-\tau}u(\tilde{W}^n_\tau)}.
\end{equation}
Now,
\begin{align*}
    |\E[e^{-\tau}u(\tilde{W}^n_{\tau})]|
    \leq\sqrt{\expect{e^{-2\tau}}\E\big[u^2(\tilde{W}^n_{\tau})\big]}.
\end{align*}
Moreover, denoting $\eta=\eta(x,t,n)=\{\tilde\tau>\tfrac{\floornt}{n}\}$
and recalling $\tau=\tilde\tau\w\tfrac{\floornt}{n}$,
\begin{align}\label{eq:bound e-2tau}
    \expect{e^{-2\tau}}&=\expect{e^{-2\tau}\mathbbm{1}_\eta}+\expect{e^{-2\tau}\mathbbm{1}_{\eta^c}}\nonumber\\
    &\leq e^{-2(t-1)}+\PP(\eta^c)\nonumber\\
    &= e^{-2(t-1)}+\PP(\Vert\tilde{W}^n\Vert_t\geq x). \notag
\end{align}
By Lemma \ref{lem:W,Q UI},
$\PP(\Vert\tilde{W}^n\Vert_t\geq x)\leq x^{-1}\E[\Vert\tilde{W}^n\Vert_t]
\leq cx^{-1}(t+1)$.
Due to the linear growth of $u$ and Lemma \ref{lem:W,Q UI},
\begin{align*}
    \E\big[u^2(\tilde{W}^n_{\tau})\big]\leq\E\big[\Vert u^2\circ\tilde{W}^n\Vert_t\big]\leq c\big(t+1\big)^2.
\end{align*}
Hence by \eqref{eq:prelimit Taylor 3},
\[
\uu{U} \ge u(0)-c(e^{-2(t-1)}+x^{-1}(t+1))^{1/2}(t+1).
\]
We can now take $x\to\iy$ and then $t\to\iy$ to obtain
$\uu{U}\ge u(0)=\sfV(0)$.
This completes the proof of the lower bound \eqref{a5}.
In view of the results of the previous section, this establishes part 1
of Theorem \ref{thm:Vn to V}, namely that $\lim_nV^n=u(0)=\sfV(0)$.

Part 2 of the result is nothing but a reformulation of \eqref{b2},
using the definition of $\ph$ in \eqref{b1} and the definitions of
$\bar{\mathbb{H}}$ and $\mathbb{H}^{\ell n}$.
Part 3 refers to the construction of the control process $a^{*n}$
via $\psi^{\ell n}$ as done in \eqref{b3}, which we have just proved to be
AO for the adversary. This completes the proof.
\QED

\section*{Appendix}
\appendix

\noi{\bf Proof of Lemma \ref{lem:B is adapted}.} 
Fix $t\geq 0$ and $n$ and denote $m=\floornt$. 
By the definition of  $B^n$, $B^n_s=\beta^n(a^n_1,a^n_2,...)(s)$ for $0\leq s\leq t$.
Define $f(a^n_1,...,a^n_m)=\beta^n(a^n_1,...,a^n_m,a^{*n},a^{*n},a^{*n},...)(s)$ for $0\leq s\leq t$, where $a^{*n}=(\pi^{*n},J^{*n})$ and $\pi^{*n}$ is a fixed member of $\PI^n$ and $J^{*n}$ is a fixed member of $\R_+^L$.
The causality property \eqref{25}  implies $B^n_s=f(a_1^n,...,a^n_m)$ for all $s\in[0,t]$.
Therefore, for any measurable set $C$, since $\beta^n$ is a measurable map,
\begin{align*}
    \big\{\{B_s^n\}_{s\in[0,t]}\in C\big\}=\big\{f(a^n_1,...,a^n_m)\in C\big\}=\lt\{(a^n_k)_{k\in[m]}\in f^{-1}(C)\rt\}\in\sigma\big((J_k^{\ell n},\pi^{\ell n}_k),k\leq \floornt,\ell\in[L]\big).
\end{align*}
Because $\pi_k^{\ell n}$ is $\calF^n_{k-1}$-measurable, one has
$\sigma\big((J_k^{\ell n},\pi_k^{\ell n}),k\leq \floornt,\ell\in[L]\big)=\calF^n_{\floornt}=\calG^n_t$.
Therefore $\{B_t^n\}$ is $\{\calG_t^n\}$-adapted.
\null\hfill$\square$

\skp

\noi{\bf Acknowledgement.} RA is supported by ISF grant 1035/20.

\footnotesize

\bibliographystyle{is-abbrv}
\bibliography{Bib}

\end{document}